\documentclass[11pt]{amsart}

\usepackage[utf8]{inputenc}
\usepackage{amsmath,amsthm,amssymb}
\usepackage{indentfirst}
\usepackage{url}
\usepackage{tikz-cd}
\usepackage[colorlinks=true]{hyperref}
\usepackage{colonequals}
\usepackage{graphicx}
\usepackage{enumitem}
\usepackage{soul}
\usepackage{stmaryrd}
\usepackage{mathrsfs,mathtools}
\usepackage{tgpagella}
\usepackage{bbm}
\usepackage{subcaption}
\usepackage[alphabetic]{amsrefs}
\usepackage{xcolor}

\usepackage[margin=1in]{geometry}

\relpenalty=9999
\binoppenalty=9999

\newtheorem{thm}[subsection]{Theorem}
\newtheorem{lem}[subsection]{Lemma}
\newtheorem{conj}[subsection]{Conjecture}
\newtheorem{prop}[subsection]{Proposition}

\newtheorem{cor}[subsection]{Corollary}
{
\theoremstyle{definition}

}
\newenvironment{rem}
{\pushQED{\qed}\remx}
{\popQED\endremx}

\newenvironment{defn}
{\pushQED{\qed}\defnx}
{\popQED\enddefnx}
\newenvironment{example}
{\pushQED{\qed}\examplex}
{\popQED\endexamplex}

\newcommand{\NN}{\mathbb N}
\newcommand{\ZZ}{\mathbb Z}
\newcommand{\RR}{\mathbb R}
\newcommand{\CC}{\mathbb C}

\newcommand{\del}{\partial}
\DeclareMathOperator{\PD}{{PD}}
\newcommand{\sort}{\mathrm{sort}}
\newcommand{\os}{\mathrm{os}}

\begin{document}
\title{Double orthodontia formulas and Lascoux positivity
}

\author{Linus Setiabrata}
\address{Linus Setiabrata, Department of Mathematics, University of Chicago, Chicago, IL, 60637. \newline\textup{linus@math.uchicago.edu}
}

\author{Avery St.~Dizier}
\address{Avery St.~Dizier, Department of Mathematics, Michigan State University, East Lansing, MI 48824. \newline\textup{stdizier@msu.edu}
}

\begin{abstract}
We give a new formula for double Grothendieck polynomials based on Magyar's orthodontia algorithm for diagrams. Our formula implies a similar formula for double Schubert polynomials $\mathfrak S_w(\mathbf x;\mathbf y)$. We also prove a curious positivity result: for vexillary permutations $w\in S_n$, the polynomial $x_1^n\dots x_n^n \mathfrak S_w(x_n^{-1}, \dots, x_1^{-1}; 1,\dots,1)$ is a graded nonnegative sum of Lascoux polynomials. We conjecture that this positivity result holds for all $w\in S_n$. This conjecture would follow from a problem of independent interest regarding Lascoux positivity of certain products of Lascoux polynomials.
\end{abstract}

\maketitle

\section{Introduction}
\subsection*{Orthodontia and flagged Weyl modules}
Schubert polynomials $\mathfrak S_w(x_1, \dots, x_n)$, introduced by Lascoux and Sch\"utzenberger \cite{ls82}, are distinguished representatives of Schubert varieties in the cohomology of the flag variety of $\CC^n$. These polynomials have very rich combinatorial structure \cite{bb93,bjs93,fk96,km05,lls21,hmms22} and play a central role in algebraic combinatorics.

Schubert polynomials $\mathfrak S_w(\mathbf x)$ are indexed by permutations $w \in S_n$. The Rothe diagram $D(w)$ of $w\in S_n$ encodes a plethora of information about $\mathfrak S_w(\mathbf x)$. More generally, for any \%-avoiding diagram $D$, there is a representation of the group $B$ of invertible upper triangular matrices called the \emph{flagged Weyl module} $\mathcal M_D$. When $D = D(w)$ is a Rothe diagram, the dual character $\chi_D$ of the representation $\mathcal M_D$ is equal to the Schubert polynomial $\mathfrak S_w(\mathbf x)$ \cite{kp87,kp04}. The study of dual characters $\chi_D$ as a whole has shed light on Schubert and related polynomials \cite{fms18,hmss24}.

Using a geometric interpretation of $\mathcal M_D$ as the space of sections of a certain line bundle on a variety, Magyar \cite{magyar98} showed that $\chi_D$ is given by the formula
\[
\chi_D = \omega_1^{k_1}\dots\omega_n^{k_n}\pi_{i_1}(\omega_{i_1}^{m_1}\pi_{i_2}(\omega_{i_2}^{m_2}(\dots\pi_{i_\ell}(\omega_{i_\ell}^{m_\ell})\dots))).
\]
Here, $\omega_i = x_1\dots x_i$ is a fundamental weight, $\pi_i = \del_i x_i$ is a Demazure operator, and
\[
\mathbf k(D) = (k_1,\dots, k_n), \qquad \mathbf i(D) = (i_1, \dots, i_\ell), \qquad \mathbf m(D) = (m_1, \dots, m_\ell)
\]
is combinatorial data associated to the \emph{orthodontic sequence} of $D$, which builds a \%-avoiding diagram from ``smaller'' \%-avoiding diagrams.

\subsection*{Doubled orthodontia}
Double Grothendieck polynomials $\mathfrak G_w(x_1, \dots, x_n; y_1, \dots, y_n)$, introduced by Lascoux \cite{lascoux90}, are distinguished representatives of Schubert varieties in the equivariant $K$-theory of the flag variety. Schubert polynomials can be obtained from double Grothendieck polynomials by setting $y_i\mapsto 0$ (corresponding to forgetting equivariance) and taking the lowest degree part (corresponding to taking the associated graded in $K$-theory). There is considerable interest in extending our understanding of Schubert polynomials to double Grothendieck polynomials and their various specializations \cite{fk94, km04,weigandt21, lls23, ccmm23,bfhtw23, psw24, hmss24}.

There is no known $K$-theoretic or equivariant analogue of $\mathcal M_D$. Despite this, we can combinatorially extend Magyar's formula to double Grothendieck polynomials:
\begin{thm}
\label{thm:double-groth-master}
Let $D$ be a \%-avoiding diagram with double orthodontic sequence $\mathbf K, \mathbf i, \mathbf j, \mathbf M$. Define
\begin{equation}
\label{eqn:double-groth-master}
\mathscr G_D(\mathbf x,\mathbf y)\colonequals \overline\omega_1^{K_1}\overline\omega_2^{K_2}\dots\overline\omega_n^{K_n}\overline\pi_{i_1,j_1}(\overline\omega_{i_1}^{M_1}\overline\pi_{i_2,j_2}(\overline\omega_{i_2}^{M_2}\dots\overline\pi_{i_\ell,j_\ell}(\overline\omega_{i_\ell}^{M_\ell})\dots)).
\end{equation}
When $D = D(w)$ is the Rothe diagram of a permutation, then $\mathscr G_D(\mathbf x, \mathbf y) = \mathfrak G_w(\mathbf x, \mathbf y)$.
\end{thm}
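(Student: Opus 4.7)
The plan is to prove \eqref{eqn:double-groth-master} by induction on the length $\ell$ of the orthodontic sequence of $D = D(w)$. For the base case $\ell = 0$, the diagram $D(w)$ is determined by $\mathbf K = (K_1, \ldots, K_n)$ alone, forcing $w$ to be a dominant permutation. The double Grothendieck polynomial of a dominant permutation factors explicitly as $\prod_{(i,j) \in D(w)}(x_i \oplus y_j)$ with $a \oplus b = a + b - ab$, and this product should coincide with $\overline\omega_1^{K_1}\cdots\overline\omega_n^{K_n}$ directly from the definition of the double $K$-theoretic fundamental weight $\overline\omega_i$.

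For the inductive step, I would identify a \%-avoiding diagram $D'$ whose orthodontic sequence is shorter than that of $D$ and such that $\mathscr G_D(\mathbf x, \mathbf y)$ is obtained from $\mathscr G_{D'}(\mathbf x, \mathbf y)$ by applying one additional layer of operators from \eqref{eqn:double-groth-master}. The central claim is that when $D = D(w)$, the diagram $D'$ is again a Rothe diagram $D(w')$ with $\ell(w') < \ell(w)$, so that by the inductive hypothesis $\mathscr G_{D'}(\mathbf x, \mathbf y) = \mathfrak G_{w'}(\mathbf x, \mathbf y)$. The theorem then reduces to an operator identity expressing $\mathfrak G_w(\mathbf x, \mathbf y)$ as the corresponding layer (multiplication by a suitable $\overline\omega_i^M$ followed by $\overline\pi_{i,j}$) applied to $\mathfrak G_{w'}(\mathbf x, \mathbf y)$; I would verify this using known $K$-theoretic divided difference and transition recursions for double Grothendieck polynomials.

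The main obstacle is establishing this operator identity without a representation-theoretic analogue of the flagged Weyl module $\mathcal M_D$. Magyar's single-variable proof relied on a geometric description of $\mathcal M_D$ as sections of a line bundle on a Bott--Samelson-like variety, which produced the formula directly; no such model is known in the $K$-theoretic equivariant setting. A further subtlety is that orthodontia is defined for arbitrary \%-avoiding diagrams with no a priori permutation-level interpretation, so one must separately verify that each orthodontia step on a Rothe diagram corresponds to a controlled operation on the indexing permutation. The proof therefore likely hinges on a careful combinatorial tracking of how columns of $D(w)$ change under orthodontia, combined with a direct calculation on the Grothendieck side --- possibly via a pipe dream expansion or a computation in the nilHecke algebra.
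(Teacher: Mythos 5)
Your high-level strategy --- induct on a combinatorial measure and peel off one ``layer'' of operators from \eqref{eqn:double-groth-master}, reducing to an operator identity for Grothendieck polynomials --- is indeed the skeleton of the paper's proof. The base case is also fine: for $\ell = 0$ the diagram is a union of standard-interval columns, $w$ is dominant, and $\overline\omega_1^{K_1}\cdots\overline\omega_n^{K_n} = \prod_{(i,j)\in D(w)}(x_i + y_j - x_iy_j) = \mathfrak G_w$ is immediate. However, there are two genuine gaps in the plan.

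First, a single orthodontic step applied to a Rothe diagram $D(w)$ is essentially never a Rothe diagram: the step swaps two rows and deletes columns equal to $[i_1]$, and this does not land in the family of Rothe diagrams. So the ``central claim'' that $D'$ is again $D(w')$ fails at the level of individual steps. What the paper does instead (Theorem~\ref{thm:os2-orth}) is to batch a run of $\beta$ consecutive orthodontic steps --- precisely the steps that fill in the leftmost gap of the leftmost non-interval column --- and show that this \emph{batch} carries $D(w)$ to $D(w')$ for $w' = ws_{i_1}\cdots s_{\alpha+1}$. This requires $w$ to be a \emph{sorted} permutation, and it is accompanied by the operator identity $\mathscr G_{D(w)} = \overline\del_{i_1}\cdots\overline\del_\alpha(\mathscr G_{D(w')})$, which follows from Lemmas~\ref{lem:operator-trick} and~\ref{lem:pi-to-del} rewriting $\overline\pi_{i_1,j_1}\cdots\overline\pi_{\alpha+1,j_\beta}$ in terms of $\overline\del$'s.

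Second, because the batching only applies to sorted $w$, you also need a way to handle an arbitrary $w$. The paper reduces $w$ to $w_\sort$ via Proposition~\ref{prop:os1-groth}, which uses pipe dreams to show $\mathfrak G_w = \bigl(\prod_{(a,b)\in D(w)\setminus D(w_\sort)}(x_a + y_b - x_a y_b)\bigr)\cdot\mathfrak G_{w_\sort}$, and Corollary~\ref{cor:os1-orth} shows the same relation holds between $\mathscr G_{D(w)}$ and $\mathscr G_{D(w_\sort)}$. This reduction step leaves $\ell$ unchanged (Corollary~\ref{cor:os1-orth} gives $\mathbf i(w) = \mathbf i(w_\sort)$, $\mathbf j(w) = \mathbf j(w_\sort)$, $\mathbf M(w) = \mathbf M(w_\sort)$), so inducting on $\ell$ alone cannot drive the argument; the paper uses the orthodontic sort order (Definition~\ref{defn:os}), a well-founded partial order whose two generating relations are exactly the sorting move and the batched-orthodontia move. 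Without the sorting reduction, the batching mechanism, and a well-founded measure that covers both, the plan as stated does not close.
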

Double Schubert polynomials $\mathfrak S_w(\mathbf x, \mathbf y)$ are equal to the lowest degree part of $\mathfrak G_w(\mathbf x, -\mathbf y)$, and Theorem~\ref{thm:double-groth-master} implies a similar formula for double Schubert polynomials (Corollary~\ref{cor:double-schub-master}). Theorem~\ref{thm:double-groth-master} also extends our previous joint work with M\'esz\'aros \cite[Thm 1.1]{mss22}, which gave a similar formula for ordinary Grothendieck polynomials.
\subsection*{Lascoux positivity}
When $D = D(\alpha)$ is the skyline diagram of a composition $\alpha\in\NN^n$, the dual character $\chi_D$ is the \emph{key polynomial} $\kappa_\alpha(x_1, \dots, x_n)$ \cite{rs95}. Magyar's formula for $\chi_D$, combined with \cite[Prop 12]{llm02}, implies that the key polynomials are minimal among the dual characters $\chi_D$ of \%-avoiding diagrams: every dual character is a nonnegative sum of key polynomials.

The Lascoux polynomials $\mathfrak L_\alpha(x_1, \dots, x_n)$, indexed by compositions $\alpha\in\NN^n$, are inhomogeneous polynomials which often play the same role to the key polynomials as Grothendieck polynomials do to Schubert polynomials. An application of our formula~\eqref{eqn:double-groth-master} is the following surprising positivity result.

\begin{thm}
\label{thm:lascoux-positivity-main}
Let $D\subseteq[n]\times[m]$ be a diagram whose columns are ordered by inclusion. Let $\mathscr S_D(\mathbf x, \mathbf y)$ be the lowest degree part of $\mathscr G_D(\mathbf x, \mathbf y)$. Then, the polynomial
\[
x_1^m \dots x_n^m \mathscr S_D(x_n^{-1}, \dots, x_1^{-1};  -1, \dots, -1)
\]
is a graded nonnegative sum of Lascoux polynomials $\mathfrak L_\alpha(x_1, \dots, x_n)$.
\end{thm}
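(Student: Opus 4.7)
The plan is to start from the master formula~\eqref{eqn:double-groth-master} of Theorem~\ref{thm:double-groth-master}, use the hypothesis that the columns of $D$ are totally ordered by inclusion to reduce the double orthodontic sequence to a regular shape, take the lowest-degree part to obtain $\mathscr S_D$, then track through the substitution $x_i\mapsto x_{n+1-i}^{-1}$, $y_j\mapsto -1$ and the multiplication by $x_1^m\cdots x_n^m$ to land on a formula involving only isobaric Demazure operators acting on a partition monomial.

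First, I would argue that for a diagram whose columns form a chain $C_1\subseteq C_2\subseteq\cdots$ under inclusion, the double orthodontic sequence $(\mathbf K,\mathbf i,\mathbf j,\mathbf M)$ takes a particularly clean form: the indices $\mathbf i$ can be chosen strictly increasing (each Demazure step attaches a new row at the top of the current column tower), and the formula~\eqref{eqn:double-groth-master} reduces to a linear nest of doubled Demazure operators $\overline\pi_{i,j}$ applied to a single product $\overline\omega_1^{K_1}\cdots\overline\omega_n^{K_n}$ of doubled fundamental weights. Taking the lowest-degree part commutes with this structure and produces an analogous expression for $\mathscr S_D(\mathbf x,\mathbf y)$ built from cohomological divided-difference operators $\partial_i$ and weight factors of the form $\prod_{k\le i}(x_k - y_{\star})$ for appropriate $y_\star$.

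Second, I would verify the key intertwining identity that makes the substitution $x_i\mapsto x_{n+1-i}^{-1}$, $y_j\mapsto -1$ together with multiplication by $x_1^m\cdots x_n^m$ convert the cohomological divided-difference operator $\partial_i$ into the isobaric $K$-theoretic Demazure operator $\pi_{n-i}$ acting on the $x$-variables. Concretely, for $N$ large enough so that everything is polynomial,
\[
x_1^N\cdots x_n^N\bigl(\partial_i f\bigr)(x_n^{-1},\dots,x_1^{-1};-1) = \pi_{n-i}\Bigl(x_1^N\cdots x_n^N\, f(x_n^{-1},\dots,x_1^{-1};-1)\Bigr).
\]
This is plausible because the weight factor $(x_k-y_j)|_{y=-1}=x_k+1$ becomes, under $x_k\mapsto x_{n+1-k}^{-1}$ and after absorbing one power of $x_{n+1-k}$, the factor $1+x_{n+1-k}$ characteristic of Lascoux polynomials. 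Iterating this identity up the Demazure tower built in the first step transforms $x_1^m\cdots x_n^m\,\mathscr S_D(x_n^{-1},\dots,x_1^{-1};-1,\dots,-1)$ into an expression of the form $\pi_{j_1}\pi_{j_2}\cdots\pi_{j_\ell}(\mathbf x^\lambda)$, where $\lambda$ is a partition computed explicitly from $(K_1,\dots,K_n)$ and $m$.

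Third, since $\mathbf x^\lambda=\mathfrak L_\lambda$ for any partition $\lambda$, and since each isobaric Demazure operator $\pi_j$ preserves the cone of graded nonnegative integer combinations of Lascoux polynomials (by the $K$-theoretic analogue of Demazure--Lascoux positivity, cf.\ \cite{llm02} for the cohomological statement and \cite{mss22} for the corresponding $K$-theoretic framework), the resulting expression is a graded nonnegative sum of Lascoux polynomials, as claimed. The principal obstacle is the intertwining identity in step two: the bookkeeping of monomial prefactors, signs from $y\mapsto -1$, and the reversal of row-indices under $w_0$ is delicate, and one must verify the identity in the presence of the full doubled Demazure operator $\overline\pi_{i,j}$ (equivalently, justify carefully that taking lowest-degree parts commutes with the substitution-and-multiplication procedure). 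Once this identity is in hand, the Lascoux positivity follows automatically.
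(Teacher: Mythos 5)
Your outline shares the same opening and intertwining steps as the paper's proof, but the crux of the argument is missing, and in fact your step three as written would render the theorem essentially vacuous if it were correct. The problem is your claim that iterating the intertwining identity reduces $x_1^m\cdots x_n^m\,\mathscr S_D(x_n^{-1},\dots,x_1^{-1};-1,\dots,-1)$ to an expression $\pi_{j_1}\cdots\pi_{j_\ell}(\mathbf x^\lambda)$ with a monomial base. This is false. The weight factors $\omega_i^K$ in $\mathscr S_D$, after specializing $y_j\mapsto -1$, become $(x_1-1)^{|K|}\cdots(x_i-1)^{|K|}$, and after conjugating by $f\mapsto x_1^m\cdots x_n^m f(x_n^{-1},\dots,x_1^{-1})$ they become powers of the operator $\varphi_{n-i}$ that multiplies by $x_1\cdots x_{n-i}(1-x_{n-i+1})\cdots(1-x_n)$. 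So the base of the Demazure tower is not a partition monomial but rather $\varphi_{k_1}\cdots\varphi_{k_m}(1)$, a product of such ``inhomogeneous fundamental weights,'' and it is precisely this quantity whose Lascoux positivity is the hard content of the theorem.

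Concretely, the paper must prove (Proposition~\ref{prop:base-positivity}) that multiplying a graded Lascoux-positive expression of the appropriate shape by $\varphi_i = x_1\cdots x_i(1-x_{i+1})\cdots(1-x_n)$ preserves graded Lascoux positivity. This is where the real work and the genuinely new input happen: one writes $(1-x_{i+1})\cdots(1-x_n)=1-G_{21}(x_{i+1},\dots,x_n)$ and invokes Orelowitz--Yu's theorem (Theorem~\ref{thm:tianyi-hammer}) that the product of a Lascoux polynomial with a stable Grothendieck polynomial has a graded nonnegative Lascoux expansion, together with the structural Lemmas~\ref{lem:Cnkl} and~\ref{lem:Cnkl-basis} to control which Lascoux polynomials can appear at each stage. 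Your proposal contains no analogue of this step, and no reference to anything that would replace Orelowitz--Yu. A secondary point: the operator you end up applying on the $x$-side should be the Demazure--Lascoux operator $\overline\pi_{n-i}$ (Equation~\eqref{eqn:pi-int}), not the key-polynomial operator $\pi_{n-i}$; the fact that $\overline\pi_i$ preserves Lascoux positivity is elementary (Lemma~\ref{lem:overlinepi-positivity}), so this is easily repaired, but the missing $\varphi_i$ analysis is a genuine gap.
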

Theorem~\ref{thm:lascoux-positivity-main} can be modified to incorporate $\beta$-Lascoux polynomials: the $\beta$-Lascoux expansion of $x_1^m\dots x_n^m \mathscr S_D(x_n^{-1}, \dots, x_1^{-1}; \beta,\dots,\beta)$ is $\ZZ_{\geq 0}[\beta]$-nonnegative.

Theorem~\ref{thm:lascoux-positivity-main} and Corollary~\ref{cor:double-schub-master} together imply the following corollary.
\begin{cor}
\label{cor:lascoux-positivity-schub}
Let $w\in S_n$ be a vexillary permutation and write $\mathfrak S_w(x_1, \dots, x_n; y_1, \dots, y_n)$ for the double Schubert polynomial. Then the polynomial
\[
x_1^n \dots x_n^n \mathfrak S_w(x_n^{-1}, \dots, x_1^{-1}; 1, \dots, 1)
\]
is a graded nonnegative sum of Lascoux polynomials $\mathfrak L_\alpha(x_1, \dots, x_n)$.
\end{cor}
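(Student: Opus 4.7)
The plan is to reduce Corollary~\ref{cor:lascoux-positivity-schub} to a direct application of Theorem~\ref{thm:lascoux-positivity-main} applied to the Rothe diagram $D(w)$. The bridge is provided by Corollary~\ref{cor:double-schub-master}, which will identify $\mathscr S_{D(w)}$ with the double Schubert polynomial up to a sign flip in $\mathbf y$.

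First I would invoke the classical characterization of vexillarity: a permutation $w \in S_n$ is vexillary if and only if the columns of its Rothe diagram $D(w)$, viewed as subsets of $\{1,\dots,n\}$, are totally ordered by inclusion. Hence, after embedding $D(w) \subseteq [n] \times [n]$ (padding with empty columns if needed so that $m = n$), the Rothe diagram of a vexillary permutation satisfies the hypothesis of Theorem~\ref{thm:lascoux-positivity-main}.

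Next I would apply Corollary~\ref{cor:double-schub-master} to express $\mathscr S_{D(w)}(\mathbf x, \mathbf y)$ in terms of the double Schubert polynomial. Since $\mathfrak S_w(\mathbf x, \mathbf y)$ is by convention the lowest-degree part of $\mathfrak G_w(\mathbf x, -\mathbf y)$ and Theorem~\ref{thm:double-groth-master} gives $\mathscr G_{D(w)}(\mathbf x,\mathbf y) = \mathfrak G_w(\mathbf x,\mathbf y)$, taking the lowest-degree part yields $\mathscr S_{D(w)}(\mathbf x, -\mathbf 1) = \mathfrak S_w(\mathbf x, \mathbf 1)$. Substituting $x_i \mapsto x_{n+1-i}^{-1}$ and multiplying by $x_1^n \dots x_n^n$, the polynomial in Theorem~\ref{thm:lascoux-positivity-main} becomes exactly the polynomial in the corollary, so Lascoux-positivity is immediate.

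The only real subtleties are bookkeeping: (i) verifying or citing the standard equivalence between vexillarity of $w$ and the columns-totally-ordered-by-inclusion property of $D(w)$; (ii) confirming that the sign convention in passing from $\mathfrak G$ to $\mathfrak S$ converts $\mathbf y = -\mathbf 1$ in Theorem~\ref{thm:lascoux-positivity-main} into $\mathbf y = \mathbf 1$ in the corollary; and (iii) ensuring that the choice $m = n$ in the container $[n] \times [m]$ of Theorem~\ref{thm:lascoux-positivity-main} matches the prefactor $x_1^n \dots x_n^n$ in the corollary. I do not anticipate any substantive obstacle beyond these, as all of the hard work has been done in Theorems~\ref{thm:double-groth-master} and~\ref{thm:lascoux-positivity-main}.
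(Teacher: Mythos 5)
Your proposal is correct and follows the same route as the paper: cite Corollary~\ref{cor:double-schub-master} to rewrite $\mathfrak S_w(\mathbf x,\mathbf 1)$ as $\mathscr S_{D(w)}(\mathbf x,-\mathbf 1)$, then apply Theorem~\ref{thm:lascoux-positivity-main} to the Rothe diagram $D(w)\subseteq[n]\times[n]$ (so $m=n$). The only thing you do that the paper leaves implicit is spelling out the classical fact that $w$ vexillary is equivalent to the columns of $D(w)$ being totally ordered by inclusion, which is exactly why the hypothesis of Theorem~\ref{thm:lascoux-positivity-main} is met.
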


We conjecture that the extra assumption on the diagram $D$ in Theorem~\ref{thm:lascoux-positivity-main} is unnecessary.
\begin{conj}
\label{conj:lascoux-positivity-general}
For any \%-avoiding diagram $D\subseteq[n]\times[m]$, the polynomial
\[
x_1^m \dots x_n^m \mathscr S_D(x_n^{-1}, \dots, x_1^{-1}; -1, \dots, -1)
\]
is a graded nonnegative sum of Lascoux polynomials $\mathfrak L_\alpha(x_1, \dots, x_n)$. In particular, the same holds for
\[
x_1^n\dots x_n^n\mathfrak S_w(x_n^{-1}, \dots, x_1^{-1}; 1, \dots, 1),
\]
for any $w \in S_n$.
\end{conj}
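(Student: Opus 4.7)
The plan is to carry the proof strategy of Theorem~\ref{thm:lascoux-positivity-main} through the full orthodontia recursion of Theorem~\ref{thm:double-groth-master}, thereby reducing the general conjecture to a purely ``$\mathbf x$-side'' Lascoux-positivity statement about products of Lascoux polynomials.

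First I would apply Theorem~\ref{thm:double-groth-master} to express $\mathscr G_D(\mathbf x, \mathbf y)$ in orthodontic form, extract the lowest-degree part to obtain $\mathscr S_D(\mathbf x, \mathbf y)$, perform the change of variables $x_i \mapsto x_{n+1-i}^{-1}$, set $y_j = -1$, and multiply through by $x_1^m\dots x_n^m$. The goal of this computation is to identify the result as a composition of the $K$-theoretic isobaric divided difference operators that generate the Lascoux polynomials, acting on the reversed variables, interspersed with multiplication by explicit $\mathbf x$-monomials inherited from the weights $\bar\omega_{i_k}^{M_k}$ and the initial $\bar\omega_j^{K_j}$. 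The specialization $y_j = -1$ is natural here because it matches the standard $\beta = 1$ convention for the $K$-theoretic Demazure operators used to build $\mathfrak L_\alpha$.

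The argument would then proceed by induction on the orthodontic depth $\ell$, building up from the innermost monomial. At each step two statements must be checked: (i) applying the transformed Demazure operator preserves Lascoux-positivity, and (ii) multiplying a Lascoux-positive expression by the transformed version of $\bar\omega_{i_k}^{M_k}$ again yields a Lascoux-positive expression. Part (i) follows from the definition of $\mathfrak L_\alpha$ as $K$-theoretic Demazure images of dominant monomials, together with the fact that the orthodontic sequence arranges the indices $i_k$ so that each operator is applied in a valid order on the current expression.

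The main obstacle is (ii). After the substitution, $\bar\omega_{i_k}^{M_k}$ becomes (up to the overall $x_1^m\dots x_n^m$ normalization) a specific Lascoux polynomial attached to a rectangular composition, and the claim reduces to Lascoux-positivity of a product of two Lascoux polynomials of a restricted shape. In the vexillary case of Theorem~\ref{thm:lascoux-positivity-main}, the inclusion condition on the columns of $D$ forces the transformed monomial to be \emph{dominant}, so the product is trivially Lascoux-positive; without that hypothesis one genuinely confronts the open problem of Lascoux-positivity of products of Lascoux polynomials, which is precisely the ``problem of independent interest'' alluded to in the introduction. The plan therefore reduces the conjecture to this product-positivity question, which I expect to be the hard step: resolving it will likely require either a set-valued crystal structure on the relevant product, a $K$-theoretic RSK-type insertion algorithm, or a recursive argument that exploits the restricted shapes of the Lascoux polynomials actually produced by orthodontia.
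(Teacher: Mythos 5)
You have correctly identified the structure of the argument: the paper also does not prove this statement (it is a conjecture), but instead establishes in Proposition~\ref{prop:mult-implies-main} a reduction to Conjecture~\ref{conj:mult-pos}, and your proposal follows essentially the same path — push the orthodontia formula through the substitution $x_i\mapsto x_{n+1-i}^{-1}$, $y_j\mapsto -1$, observe that the transformed Demazure operators become the $\overline\pi_i$ that build Lascoux polynomials (hence preserve Lascoux positivity, Lemma~\ref{lem:overlinepi-positivity}), and isolate the multiplication steps as the obstruction. That much is right, and in the right order.

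Two points need correcting, though. First, your description of the residual open problem is inaccurate: the transformed $\overline\omega_{i_k}^{M_k}$ does \emph{not} become a Lascoux polynomial of rectangular shape, and the reduced statement is \emph{not} ``Lascoux-positivity of a product of two Lascoux polynomials.'' After specializing $y_j\mapsto -1$ and conjugating by $r_{m,n}$ (Equations~\eqref{eqn:omega-spec} and~\eqref{eqn:omega-int}), $\overline\omega_i^M$ becomes $|M|$ applications of the operator $\varphi_{n-i}\colon f\mapsto x_1\cdots x_{n-i}(1-x_{n-i+1})\cdots(1-x_n)f$. The multiplier $x_1\cdots x_j(1-x_{j+1})\cdots(1-x_n)$ is \emph{not} a Lascoux polynomial — it has graded alternating signs and equals $x_1\cdots x_j\bigl(1-G_{21}(x_{j+1},\dots,x_n)\bigr)$ per Example~\ref{ex:g21}. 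The precise statement one is reduced to is Conjecture~\ref{conj:mult-pos}: that $\varphi_j\mathfrak L_\alpha$ is a graded nonnegative Lascoux combination for \emph{arbitrary} $\alpha$. This is a genuinely different and apparently harder question than Lascoux-positivity of $\mathfrak L_\alpha\cdot\mathfrak L_\gamma$.

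Second, your explanation of why the vexillary case goes through is off. You say the inclusion hypothesis makes the transformed multiplier ``dominant, so the product is trivially Lascoux-positive.'' In fact the multipliers are still $\varphi_j$'s, not dominant monomials, and the positivity is far from trivial. What the inclusion hypothesis actually buys (via Lemma~\ref{lem:orth-of-incl} and Corollary~\ref{cor:omegas-then-pis}) is that all the $\omega$-multiplications can be commuted to the inside, before any $\pi$'s are applied. After transformation this means the $\varphi_j$'s act first, in weakly ordered fashion, so that at the moment $\varphi_i$ is applied the current polynomial lies in the structured subspace $V_{i,\ell}$ of Lemma~\ref{lem:Cnkl-basis}. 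On that subspace, the needed positivity (Proposition~\ref{prop:base-positivity}) is established by writing $\varphi_i = x_1\cdots x_i\bigl(1-G_{21}(x_{i+1},\dots,x_n)\bigr)$ and invoking the nontrivial Orelowitz--Yu positivity theorem (Theorem~\ref{thm:tianyi-hammer}) for $\mathfrak L_\alpha\cdot G_w$. Without the inclusion hypothesis, the $\pi$'s and $\omega$'s interleave, the intermediate polynomials need not lie in any $V_{i,\ell}$, and one is genuinely stuck with the full strength of Conjecture~\ref{conj:mult-pos}.
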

In Proposition~\ref{prop:mult-implies-main}, we show that Conjecture~\ref{conj:lascoux-positivity-general} would follow from the following conjecture of independent interest:
\begin{conj}
\label{conj:mult-pos}
For any $\alpha\in\NN^n$ and $i\in[n]$, the product $x_1\dots x_i(1-x_{i+1})\dots(1-x_n)\mathfrak L_\alpha$ is a graded nonnegative linear combination of Lascoux polynomials.
\end{conj}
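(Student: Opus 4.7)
The plan is to proceed by induction, after first lifting the statement to the $\beta$-parameterized (inhomogeneous) setting where ``graded nonnegative" becomes honest $\NN[\beta]$-nonnegativity: recall that the $\beta$-Lascoux polynomial $\mathfrak L_\alpha^{(\beta)}$ specializes to $\mathfrak L_\alpha$ at $\beta=-1$ and to the key polynomial $\kappa_\alpha$ at $\beta=0$. The easy end of the induction is $i = n$: the shift identity $x_1\cdots x_n \mathfrak L_\alpha^{(\beta)} = \mathfrak L_{\alpha+(1,\ldots,1)}^{(\beta)}$ follows because $x_1 \cdots x_n$ is symmetric, hence commutes with the isobaric Demazure--Lascoux operators that build $\mathfrak L_\alpha^{(\beta)}$ from its dominant monomial. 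The obstruction to simply inducting on $n - i$ is that
\[
x_1 \cdots x_i \prod_{j > i}(1-x_j)\,\mathfrak L_\alpha = x_1 \cdots x_i \prod_{j > i+1}(1-x_j)\,\mathfrak L_\alpha - x_1 \cdots x_{i+1} \prod_{j > i+1}(1-x_j)\,\mathfrak L_\alpha
\]
expresses the target as a \emph{difference} of two smaller instances of the conjecture, so cancellations must be tracked very carefully.

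My main line of attack would be a combinatorial proof via set-valued tableaux. Assuming the Monical--Pechenik--Searles type generating function $\mathfrak L_\alpha^{(\beta)} = \sum_T \beta^{\mathrm{ex}(T)} \mathbf x^{\mathrm{wt}(T)}$ over set-valued skyline tableaux $T$ of shape $\alpha$, I would interpret the factor $x_1 \cdots x_i$ as inserting $i$ mandatory extra boxes (one per row $1, \ldots, i$) and the factor $\prod_{j > i}(1 + \beta x_j)$ as allowing an optional extra box in each of rows $i+1, \ldots, n$, each contributing a factor of $\beta x_j$. The conjecture would then reduce to exhibiting a weight-preserving bijection between this family of ``augmented" set-valued skyline tableaux and a disjoint union of Lascoux families for larger compositions $\gamma$. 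The main obstacle is precisely the construction of this bijection: it would amount to a $K$-theoretic analogue of Mason's skyline insertion capable of handling both mandatory and optional additional boxes while yielding valid set-valued skyline tableaux in the target shapes, and maintaining the delicate row-strictness conditions under insertion appears to be the central difficulty.

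A secondary strategy is to reduce directly to Theorem \ref{thm:lascoux-positivity-main}: construct a \%-avoiding diagram $D \subseteq [n] \times [m]$ with inclusion-ordered columns such that $x_1^m \cdots x_n^m \mathscr S_D(x_n^{-1}, \ldots, x_1^{-1}; -1, \ldots, -1)$ matches the target product $x_1 \cdots x_i (1-x_{i+1}) \cdots (1-x_n) \mathfrak L_\alpha$. For suitably vexillary-type $\alpha$ this should be achievable, but covering all $\alpha$ seems to require the very insertion/crystal input from the main strategy, so this route most likely resolves only a special case.
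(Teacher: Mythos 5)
This statement is an open \emph{conjecture} in the paper (Conjecture~\ref{conj:mult-pos}), so there is no proof of the paper's to compare against: what you were asked to prove is something the authors themselves leave unresolved, and you should be evaluated on whether your sketch is mathematically sound and whether it plausibly closes the gap. It does not. Your background observations are correct --- the $\beta$-lift, the shift identity $x_1\cdots x_n\mathfrak L_\alpha^{(\beta)}=\mathfrak L_{\alpha+(1,\ldots,1)}^{(\beta)}$ (which follows since $x_1\cdots x_n$ commutes with each $\overline\pi_j$), and the observation that distributing $(1-x_{i+1})$ produces a difference rather than a positive combination. But you yourself concede that the bijection on set-valued skyline tableaux is ``the central difficulty,'' and you do not construct it, so the proposal is not a proof; it is a research plan whose hard part is still open.

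Two more specific issues. First, your inductive decomposition
$x_1\cdots x_i\prod_{j>i}(1-x_j)\mathfrak L_\alpha = x_1\cdots x_i\prod_{j>i+1}(1-x_j)\mathfrak L_\alpha - x_1\cdots x_{i+1}\prod_{j>i+1}(1-x_j)\mathfrak L_\alpha$
does not express the target as a difference of two instances of the conjecture: the second term is $\varphi_{i+1}\mathfrak L_\alpha$, but the first term skips the factor $(1-x_{i+1})$ while still multiplying only by $x_1\cdots x_i$, so it is not of the form $\varphi_k\mathfrak L_\alpha$ for any $k$. An induction on $n-i$ along these lines would need to prove positivity for a strictly larger family of operators than the $\varphi_i$, and you haven't formulated what that family should be. Second, you don't engage with the partial progress already present in the paper: Proposition~\ref{prop:base-positivity} proves the conjecture for $\mathfrak L_\alpha$ with $\alpha\in C_{n,i,\ell}$ (first $i$ entries all equal and maximal), via the identity $(1-x_{i+1})\cdots(1-x_n)=1-G_{21}(x_{i+1},\ldots,x_n)$ and the Orelowitz--Yu positivity theorem (Theorem~\ref{thm:tianyi-hammer}), combined with the $C_{n,i,\ell}$-basis description of $V_{i,\ell}$ from Lemma~\ref{lem:Cnkl-basis}. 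That route is the most promising known toehold; the genuine open problem is precisely when $\alpha$'s first $i$ entries are unequal, so that $x_1\cdots x_i$ cannot be peeled off and $(1-G_{21})$ gets entangled with the variables $x_1,\ldots,x_i$. A useful next step for you would be to test whether the Orelowitz--Yu expansion can still be applied when the $\mathfrak L_\alpha$ factor genuinely involves $x_1,\ldots,x_i$, rather than pivoting to an unconstructed insertion algorithm.
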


\subsection*{Outline of the paper}
Our proof of Theorem~\ref{thm:double-groth-master} is based on an improvement on the ideas in \cite{mss22}: we induct on a partial order on $S_n$ called the \emph{orthodontic sort order} (Definition~\ref{defn:os}), and a key step is the study of the orthodontic sequence of a \emph{sorted permutation} (Theorem~\ref{thm:os2-orth}). Although the argument is narratively similar to that in \cite{mss22}, we need different methods. Our proof of Theorem~\ref{thm:lascoux-positivity-main} examines the orthodontic sequence of diagrams $D$ ordered by inclusion (Corollary~\ref{cor:omegas-then-pis}) and studying the behavior of the operators $\omega_i^M$ and $\pi_{i,j}$ when specializing $y_j\mapsto -1$ and intertwining with the operator
\[
f(x_1, \dots, x_n)\mapsto x_1^m \dots x_n^m f(x_n^{-1}, \dots, x_1^{-1}).
\]
\section*{Acknowledgments}
We benefited greatly from many helpful conversations with Dave Anderson, Karola M\'esz\'aros, and Tianyi Yu.
\section{Background}
\subsection*{Conventions}
We write permutations $w \in S_n$ in one-line notation. For $j\in[n-1]$, the notation $s_j$ denotes the adjacent transposition in $S_n$ which swaps $j$ and $j+1$. For $w\in S_n$, let $\ell(w)$ denote the length of $w$. Permutations act on the right: $ws_j$ is equal to $w$ with $w(j)$ and $w(j+1)$ swapped.
\subsection*{Difference operators and families of polynomials}
For $i \in [n-1]$, define the \emph{divided difference operator} $\del_i\colon \RR[x_1, \dots, x_n]\to \RR[x_1, \dots, x_n]$ by the formula
\[
\del_i(f) \colonequals \frac{f - s_i f}{x_i - x_{i+1}}.
\]
The \emph{isobaric divided difference operators} $\overline\del_i$, \emph{Demazure operators} $\pi_i$, and \emph{Demazure--Lascoux operators} $\overline\pi_i$ are defined on $\RR[x_1, \dots, x_n]$ by the formulas
\begin{align*}
\overline\del_i(f) &\colonequals \del_i((1 - x_{i+1})f),\\
\pi_i(f) &\colonequals \del_i(x_if),\\
\overline\pi_i(f) &\colonequals \overline\del_i(x_if).
\end{align*}
\begin{lem}
\label{lem:varphi-braid}
Let $\varphi_i$ denote any of $\del_i$, $\overline\del_i$, $\pi_i$, or $\overline\pi_i$. Then
\[
\varphi_i \varphi_{i+1}\varphi_i = \varphi_{i+1}\varphi_i\varphi_{i+1} \qquad\textup{ and } \qquad \varphi_i\varphi_j = \varphi_j\varphi_i\textup{ if } |i-j| \geq 2.
\]
\end{lem}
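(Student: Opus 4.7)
The plan is to treat the two relations separately. For the commutation relation $\varphi_i \varphi_j = \varphi_j \varphi_i$ with $|i-j| \geq 2$, each of $\partial_i, \overline\partial_i, \pi_i, \overline\pi_i$ is built from the transposition $s_i$, multiplication by $x_i$ or $x_{i+1}$, and division by $x_i - x_{i+1}$. When $|i-j| \geq 2$ all of these ingredients commute with their $j$-counterparts (distinct variables commute, and $s_i, s_j$ commute in $S_n$), so the commutation is immediate from the definitions.

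For the braid relation, only the variables $x_i, x_{i+1}, x_{i+2}$ are involved, so after relabeling I may verify $\varphi_1 \varphi_2 \varphi_1 = \varphi_2 \varphi_1 \varphi_2$ acting on $\mathbb{R}[x_1, x_2, x_3]$. For $\varphi_i = \partial_i$ this is classical; a convenient route is the closed form
\[
\partial_1 \partial_2 \partial_1(f) = \frac{1}{(x_1-x_2)(x_1-x_3)(x_2-x_3)} \sum_{\sigma \in S_3} \operatorname{sgn}(\sigma)\, \sigma(f),
\]
which is manifestly symmetric in the two sides of the identity. For the remaining operators I would write each as $\varphi_i = \partial_i \circ M_{g_i}$, where $M_g$ denotes multiplication by $g$ and $g_i$ equals $1-x_{i+1}$, $x_i$, or $x_i(1-x_{i+1})$ respectively. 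Using the twisted Leibniz rule $\partial_i(fg) = (\partial_i f)\,g + (s_i f)\,(\partial_i g)$, one can move each $M_{g_i}$ past the intervening $\partial_j$'s in $\varphi_1 \varphi_2 \varphi_1$ to land on a sum of terms of the form $\partial_1 \partial_2 \partial_1(p(x)f)$; performing the analogous expansion for $\varphi_2\varphi_1\varphi_2$ and invoking the braid relation already proved for $\partial_i$ reduces everything to a polynomial identity in $x_1, x_2, x_3$.

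The main obstacle will be the combinatorial bookkeeping in this final reduction, particularly for $\overline\pi_i$, since $g_i^{\overline\pi} = x_i(1-x_{i+1})$ is not $s_i$-symmetric and introduces many cross terms under Leibniz. An efficient alternative is to verify the braid relation directly on a spanning set of monomials $x_1^a x_2^b x_3^c$: each operator raises or lowers total degree by a bounded amount, so the identity on each monomial is a finite rational-function check. A third route, perhaps the cleanest, is to recognize $\pi_i$ and $\overline\pi_i$ as generators of representations of the $0$-Hecke algebra (via the idempotence $\pi_i^2 = \pi_i$ and the analogous quadratic relation for $\overline\pi_i$), whose defining relations include the braid identity by construction; an analogous observation for $\overline\partial_i$ then completes the lemma.
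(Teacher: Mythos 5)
The paper gives no proof of Lemma~\ref{lem:varphi-braid}; it is quoted as a standard fact about divided difference and Demazure operators. So I can only assess your proposal on its own merits.

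Your treatment of the commutation relation is fine, and citing the classical antisymmetrizer formula $\del_1\del_2\del_1(f) = \bigl(\sum_{\sigma\in S_3}\operatorname{sgn}(\sigma)\,\sigma(f)\bigr)/\prod_{i<j}(x_i-x_j)$ is a legitimate way to dispose of the braid relation for $\del_i$. Your first two routes for the remaining operators (Leibniz bookkeeping, or checking on monomials $x_1^ax_2^bx_3^c$) are viable in principle, but neither is carried out, and the bookkeeping is heavier than you suggest: the twisted Leibniz rule $\del_i(fg) = \del_i(f)g + s_i(f)\del_i(g)$ is correct, but after expanding three nested multiplications the cancellation needed is far from automatic and must be exhibited explicitly.

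The more serious issue is your third, ``cleanest'' route via the $0$-Hecke algebra: it is circular. The $0$-Hecke algebra is \emph{presented} by the quadratic relations \emph{together with} the braid and commutation relations. Verifying that a family of operators satisfies the quadratic relation $\pi_i^2 = \pi_i$ (or $\overline\pi_i^2 = \overline\pi_i$, $\overline\del_i^2 = \overline\del_i$) establishes nothing about how $\pi_i$ and $\pi_{i+1}$ interact; you would still need to prove the braid relation to conclude that the operators define a $0$-Hecke representation, which is exactly the content of the lemma. The phrase ``whose defining relations include the braid identity by construction'' applies to the abstract algebra, not to an arbitrary collection of idempotents. To make this route noncircular you would need an \emph{independent} construction of the $0$-Hecke action (say, from the geometry of Bott--Samelson varieties or from the Weyl group action on a character lattice), which is substantially more machinery than the lemma warrants and is not what the paper invokes. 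As written, the third route begs the question, so if you intend to submit a proof you should commit to one of the first two routes and actually execute the reduction.
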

The \emph{double Grothendieck polynomial} $\mathfrak G_w(\mathbf x, \mathbf y)$ of $w\in S_n$ is defined recursively on the weak Bruhat order, starting from the longest permutation $w_0 \in S_n$. The polynomial $\mathfrak G_w(\mathbf x, \mathbf y)$ is defined by
\[
\mathfrak G_w(\mathbf x, \mathbf y) = \begin{cases} \prod_{i+j\leq n}(x_i + y_j - x_iy_j) &\textup{ if } w = w_0,\\ \overline\del_i\mathfrak G_{ws_i}(\mathbf x, \mathbf y)&\textup{ if } \ell(w) < \ell(ws_i).\end{cases}
\]
Lemma~\ref{lem:varphi-braid} guarantees that double Grothendieck polynomials are well-defined. The \emph{double Schubert polynomial} $\mathfrak S_w(\mathbf x, \mathbf y)$ is defined to be the lowest degree part of $\mathfrak G_w(\mathbf x, -\mathbf y)$. In particular,
\[
\mathfrak S_w(\mathbf x, \mathbf y) = \begin{cases} \prod_{i+j\leq n}(x_i - y_j) &\textup{ if } w = w_0,\\ \del_i\mathfrak S_{ws_i}(\mathbf x, \mathbf y) &\textup{ if } \ell(w) < \ell(ws_i).\end{cases}
\]
The ordinary \emph{Grothendieck polynomials} $\mathfrak G_w(\mathbf x)$ and ordinary \emph{Schubert polynomials} $\mathfrak S_w(\mathbf x)$ are defined to be the specializations $\mathfrak G_w(\mathbf x, \mathbf 0)$ and $\mathfrak S_w(\mathbf x,\mathbf 0)$ of their doubled counterparts. In particular,
\[
\mathfrak G_w(\mathbf x) = \begin{cases} x_1^n x_2^{n-1} \dots x_n &\textup{ if } w = w_0,\\ \overline\del_i\mathfrak G_{ws_i}(\mathbf x) &\textup{ if } \ell(w) < \ell(ws_i),\end{cases} \hspace{0.8em} \textup{ and } \hspace{0.8em} \mathfrak S_w(\mathbf x) = \begin{cases} x_1^n x_2^{n-1}\dots x_n &\textup{ if } w = w_0,\\ \del_i\mathfrak S_{ws_i}(\mathbf x) &\textup{ if } \ell(w) < \ell(ws_i).\end{cases}
\]
The symmetric group $S_n$ acts on compositions via permuting coordinates. The \emph{Lascoux polynomial} $\mathfrak L_\alpha$ of a composition $\alpha \in \NN^n$ is defined recursively by
\[
\mathfrak L_\alpha(\mathbf x) = \begin{cases} x_1^{\alpha_1}\dots x_n^{\alpha_n}&\textup{ if } \alpha_1 \geq \dots \geq \alpha_n\\ \overline\pi_i\mathfrak L_{\alpha\cdot s_i}(\mathbf x) &\textup{ if } \alpha_i < \alpha_{i+1}.\end{cases}
\]
Lemma~\ref{lem:varphi-braid} guarantees that Lascoux polynomials are well-defined. The \emph{key polynomial} $\kappa_\alpha(\mathbf x)$ is defined to be the lowest degree part of $\mathfrak L_\alpha(\mathbf x)$. In particular,
\[
\kappa_\alpha(\mathbf x) = \begin{cases} x_1^{\alpha_1}\dots x_n^{\alpha_n}&\textup{ if } \alpha_1 \geq \dots \geq \alpha_n\\ \pi_i\kappa_{\alpha\cdot s_i}(\mathbf x) &\textup{ if } \alpha_i < \alpha_{i+1}.\end{cases}
\]
\subsection*{Pipe dreams}
A \emph{pipe dream} is a filling of a triangular grid $\{(i,j)\in[n]\times[n]\colon i + j \leq n\}$ with crossing tiles \includegraphics[scale=0.5]{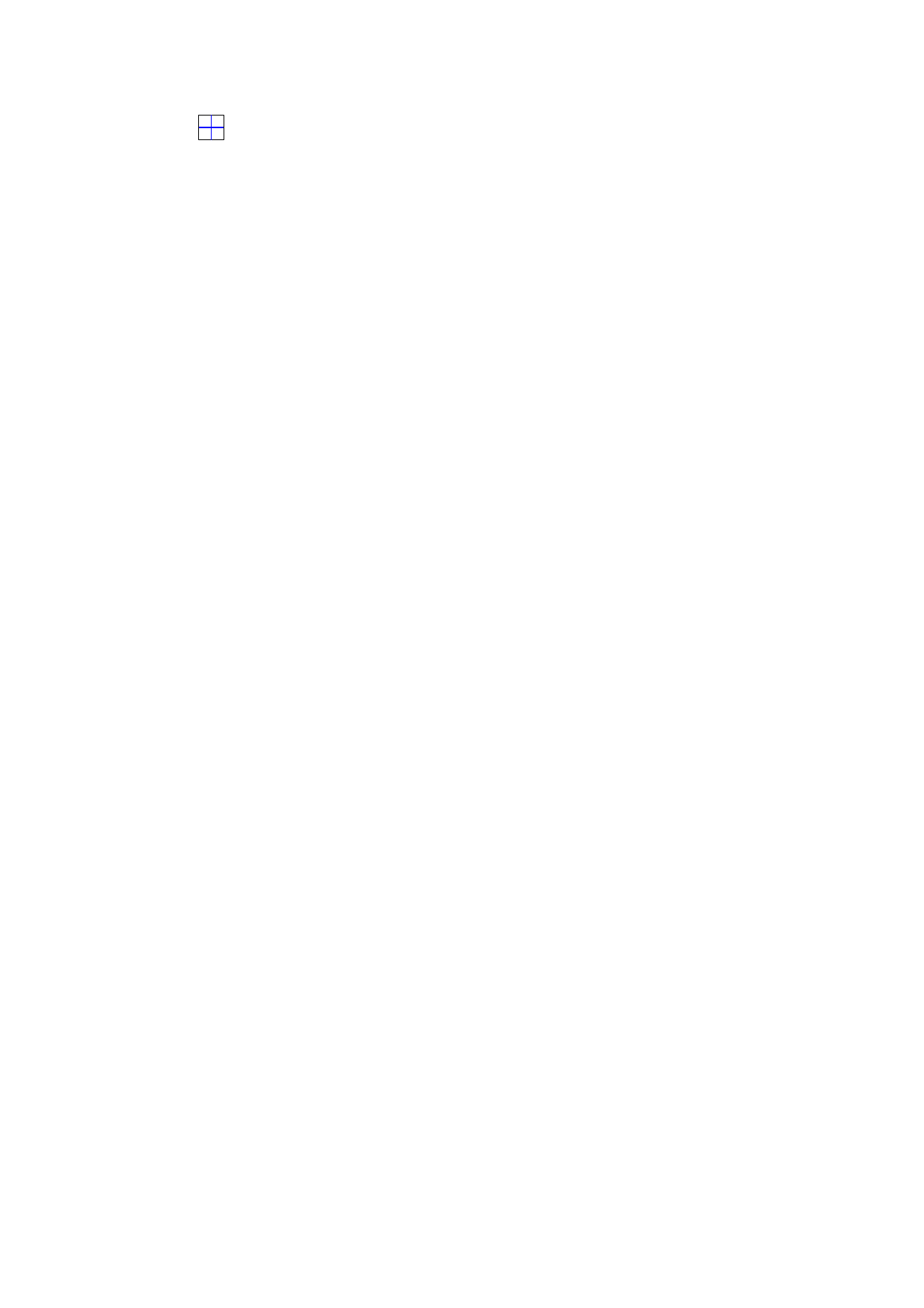} and bumping tiles \includegraphics[scale=0.5]{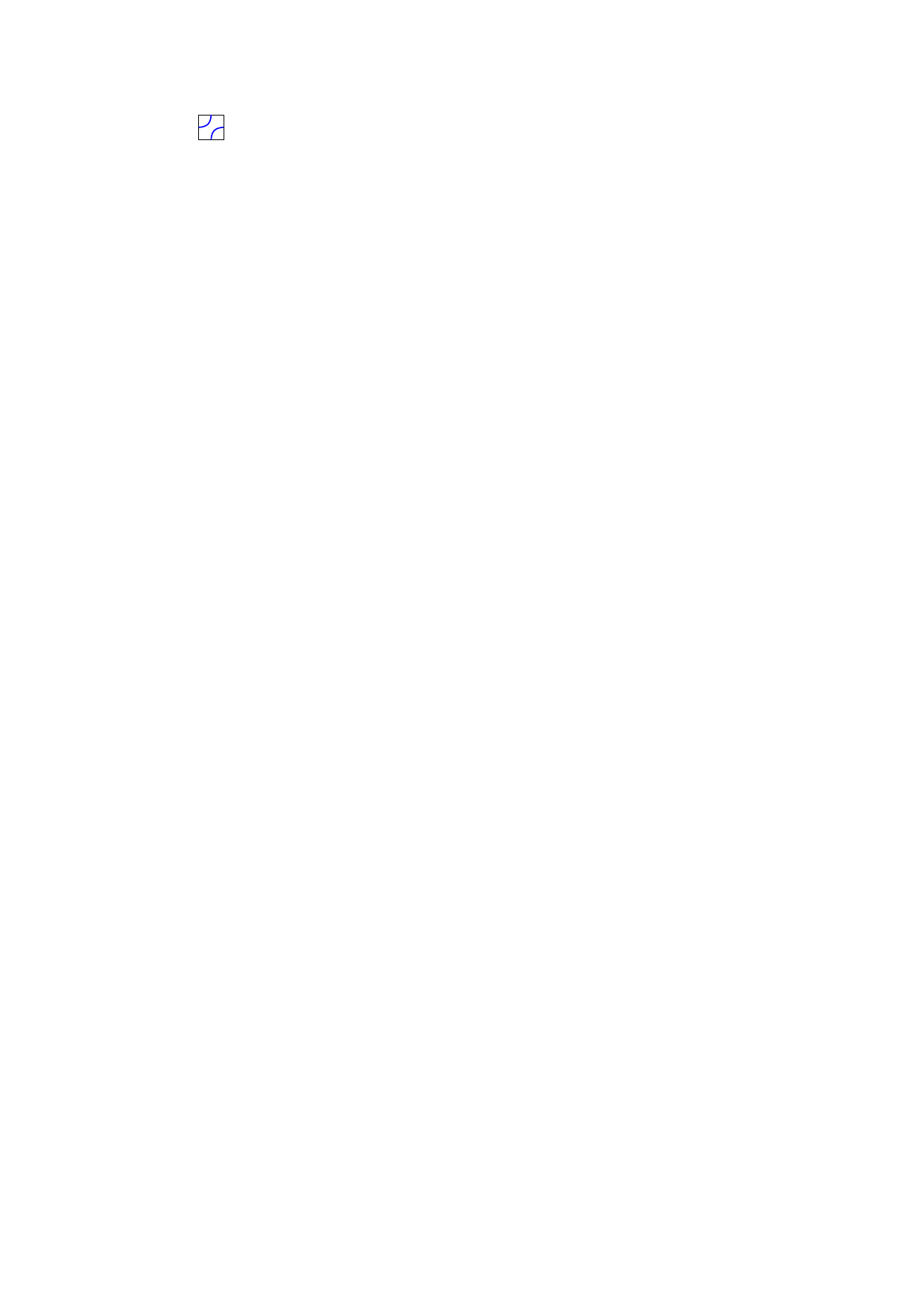}. By placing half-bumping tiles \includegraphics[scale=0.5]{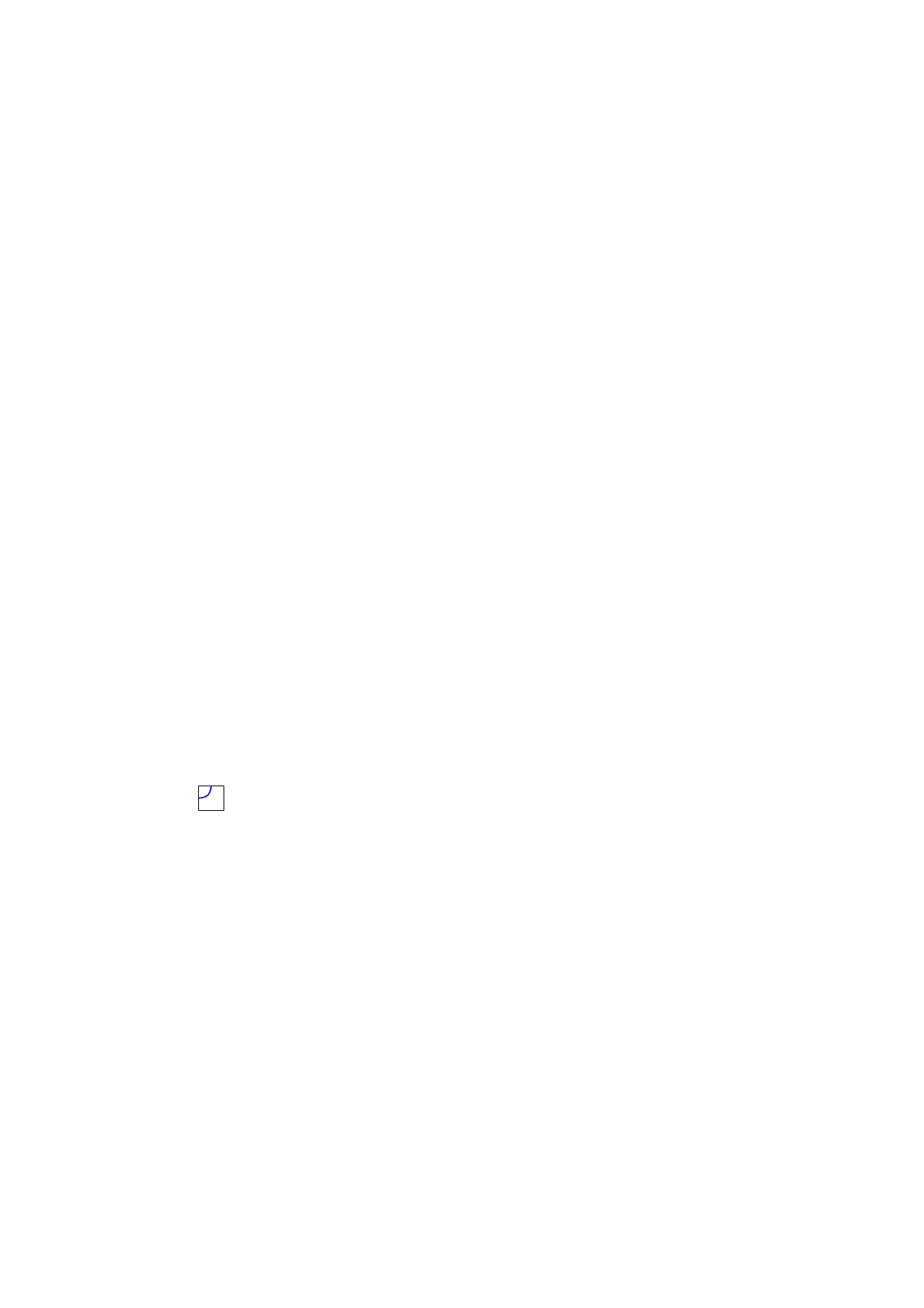} at $\{(i,j)\in[n]\times[n]\colon i+j = n+1\}$, a pipe dream forms a network of $n$ pipes running from the top edge of the grid to the left edge (see Figure~\ref{fig:1423-pd}).

For any pipe dream $P$, there is an associated permutation $\del(P) \in S_n$ given by labeling the pipes $1$ through $n$ along the top edge, tracing the pipes and ignoring any crossings between any pair of pipes which have already crossed, i.e.\ replacing redundant crossing tiles with bump tiles; reading the labels of the pipes along the left edge from top to bottom gives a string of numbers $(\del(P))(1), (\del(P))(2), \dots, (\del(P))(n)$ that defines $\del(P) \in S_n$. (The permutation $\del(P)$ is the \emph{Demazure product} of the transpositions $s_i\in S_n$ corresponding to antidiagonals on which the crosses sit, reading right to left, starting from the top row; cf.\ \cite[Ex 5.1]{km04}, \cite[\S 6.1]{weigandt21}. Our convention agrees with original definition \cite{bb93}: the pipe at the $i$\textsuperscript{th} row is connected to the $(\del(P))(i)$\textsuperscript{th} column.)

We identify $P$ with the set $\{(i,j) \colon P\textup{ has } \includegraphics[scale=0.5]{images/crossing-tile} \textup{ at } (i,j)\}$ of crosses of $P$, and we will denote by $P^{(i)}$ the pipe of $P$ entering in the $i$\textsuperscript{th} column. %\note{traverse means follow path as if it were reduced... you might traverse ``cross tiles, but not horizontally/vertically''...}

\begin{example}
\label{ex:1423-pd}
All five pipe dreams $P$ such that $\del(P) = 1423$ are displayed in Figure~\ref{fig:1423-pd}. Redundant crossing tiles \includegraphics[scale=0.5]{images/crossing-tile} are highlighted in red.
\begin{figure}[ht]
\centering
\includegraphics[scale=0.8]{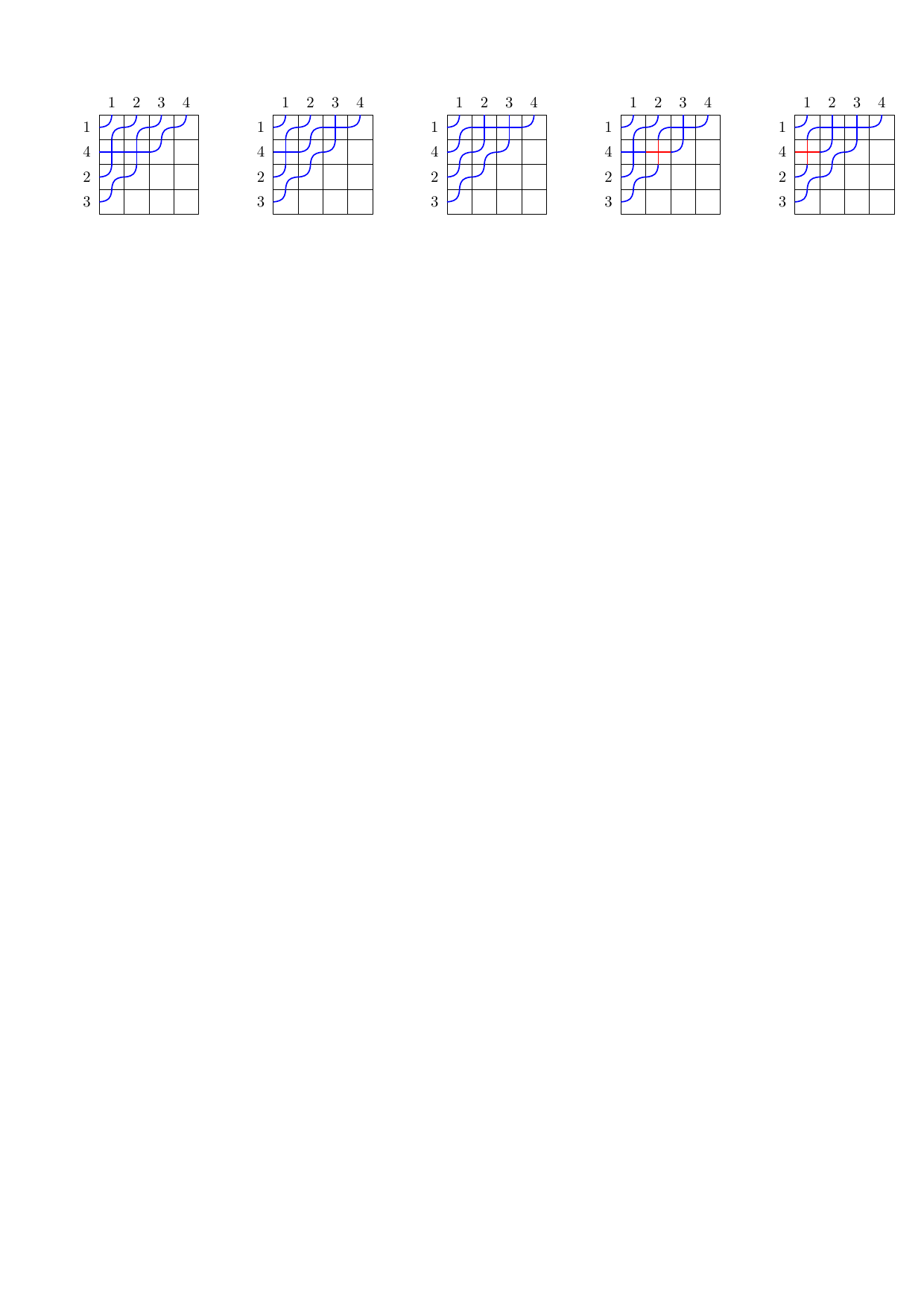}
\caption{The five pipe dreams in $\PD(1423)$.}
\label{fig:1423-pd}
\end{figure}
\end{example}

\begin{thm}[{\cite[Thm 6.1]{weigandt21}; see also \cite[Cor 5.4]{km04}, \cite[Thm 2.3]{fk94}}]
The double Grothendieck polynomial of $w\in S_n$ is given by the formula
\[
\mathfrak G_w(\mathbf x, \mathbf y) = \sum_{P \in \PD(w)}\prod_{(i,j) \in P} (x_i + y_j - x_iy_j).
\]
\end{thm}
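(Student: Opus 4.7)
The plan is to induct downward on the length of $w$ in the weak Bruhat order, mirroring the recursive definition of $\mathfrak G_w(\mathbf x, \mathbf y)$. Write $F_u(\mathbf x, \mathbf y) \colonequals \sum_{P \in \PD(u)} \prod_{(a,b) \in P}(x_a + y_b - x_ay_b)$ for the proposed right-hand side. The goal is to show $F_w = \mathfrak G_w$ for every $w \in S_n$.

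For the base case $w = w_0$, realizing the reverse permutation as a Demazure product on the triangular grid forces every antidiagonal tile to be a crossing tile, so $\PD(w_0)$ has a unique element: the pipe dream with a cross at every $(i,j)$ with $i+j \leq n$. Its weight is exactly $\prod_{i+j\leq n}(x_i + y_j - x_iy_j)$, matching the definition of $\mathfrak G_{w_0}$. For the inductive step, suppose $F_{ws_i} = \mathfrak G_{ws_i}$ whenever $\ell(ws_i) = \ell(w) + 1$; it then suffices to verify that $\overline\del_i F_{ws_i} = F_w$, since $\mathfrak G_w = \overline\del_i \mathfrak G_{ws_i}$.

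The heart of the argument is a weight-compatible involution on $\PD(ws_i) \sqcup \PD(w)$ that only modifies rows $i$ and $i+1$ of a pipe dream. Scan left-to-right along these two rows until the first column $c$ where the local configuration determines whether the pipes entering at columns corresponding to the positions where $w$ and $ws_i$ differ have already crossed. The involution toggles the tile configuration at $c$, and it pairs each pipe dream of $ws_i$ with (at most) one pipe dream of $w$ in such a way that the sum of their weights is stable under $\overline\del_i = \del_i \circ (1 - x_{i+1})$; unpaired pipe dreams of $\PD(w)$ account for $F_w$ exactly, while matched pairs sum to $\overline\del_i$ applied to their $ws_i$-weight. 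The crucial combinatorial fact needed is that modifying the tile at column $c$ right-multiplies the Demazure product $\del(P)$ by $s_i$ precisely when the two affected pipes have not already crossed to the left, which is the same condition governing whether a crossing contributes nontrivially under $\overline\del_i$.

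The main obstacle is controlling this involution uniformly across \emph{all} pipe dreams of the Demazure product, including non-reduced ones that contain redundant crossings; this is where the proof differs from the classical Fomin--Kirillov bijective proof for (single) Schubert polynomials. A cleaner alternative, bypassing the combinatorial bookkeeping, is to follow Knutson--Miller \cite{km04}: identify $\mathfrak G_w(\mathbf x, \mathbf y)$ with the equivariant $K$-polynomial of the matrix Schubert variety, Gr\"obner-degenerate this variety to a union of coordinate subspaces indexed by pipe dreams in $\PD(w)$, and observe that each coordinate component contributes the $T$-equivariant $K$-polynomial $\prod_{(a,b) \in P}(x_a + y_b - x_a y_b)$. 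Either route reduces the theorem to verifying the $\overline\del_i$-recursion, whether by a local combinatorial swap or by flatness of the degeneration.
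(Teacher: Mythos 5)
The paper does not prove this theorem: it is imported as background, cited to Weigandt, Knutson--Miller, and Fomin--Kirillov, and then used as an ingredient in the proof of Proposition~\ref{prop:os1-groth}. So there is no ``paper's own proof'' to compare your write-up against; what you have produced is an attempt at a proof of a cited result.

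Taken on its own terms, your sketch is structured correctly (base case $w_0$, induction on weak order, reduce to $\overline\del_i F_{ws_i} = F_w$), and the base case is fine: the $\binom{n}{2}$-tile staircase has Demazure product $w_0$ only if every tile is a cross. But the inductive step has a genuine gap. The central object --- ``a weight-compatible involution on $\PD(ws_i)\sqcup\PD(w)$ that only modifies rows $i$ and $i+1$'' --- is never actually constructed. ``Scan left-to-right until the first column $c$ where the local configuration determines whether the pipes...have already crossed'' and ``toggle the tile configuration at $c$'' do not specify a map: it is not said what configurations can occur at $c$, what the toggle does to each, why the result lies back in $\PD(ws_i)\sqcup\PD(w)$, or why the map squares to the identity. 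The claim ``matched pairs sum to $\overline\del_i$ applied to their $ws_i$-weight'' is asserted without any computation, and this is exactly where the double (equivariant) case differs from the single one: the weight $x_a + y_b - x_ay_b$ of a tile depends on the column $b$ as well as the row $a$, so moving a cross horizontally or vertically changes the weight nontrivially in $y$, and the naive ladder/chute bookkeeping that works for $\mathfrak S_w(\mathbf x)$ does not transfer. Handling the redundant crossings of non-reduced pipe dreams compounds this. As written, the combinatorial route is a plan, not a proof. Your second paragraph (Knutson--Miller degeneration / $K$-polynomial of the matrix Schubert variety) is a correct pointer to a genuine proof, but it is a citation to an external argument rather than a proof in its own right --- which puts you in the same position as the paper, which simply cites the theorem. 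If you want a self-contained combinatorial proof, the standard route is Fomin--Kirillov's Yang--Baxter / exchange argument in the (twisted) $0$-Hecke algebra, which packages the $\overline\del_i$-recursion algebraically and avoids having to construct an explicit involution.
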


\subsection*{Orthodontic sequence}
A \emph{diagram} is defined to be a subset $D\subseteq [n]\times[m]$. We view $D = (D_1, \dots, D_m)$ as a subset of an $n\times m$ grid, where $D_j\colonequals \{i \in[n]\colon (i,j)\in D\}$ encodes the $j$\textsuperscript{th} column: an element $i \in D_j$ corresponds to a box in row $i$ and column $j$.

\begin{defn}
Let $w \in S_n$. The Rothe diagram $D(w)$ is defined to be
\[
D(w) = \{(i,j)\in[n]\times[n]\colon i < w^{-1}(j) \textup{ and } j < w(i)\}.\qedhere
\]
\end{defn}

\begin{example}
The Rothe diagram $D(31542)$ consists of the purple squares in Figure~\ref{fig:31542-rothe}.
\begin{figure}[ht]
\centering
\includegraphics{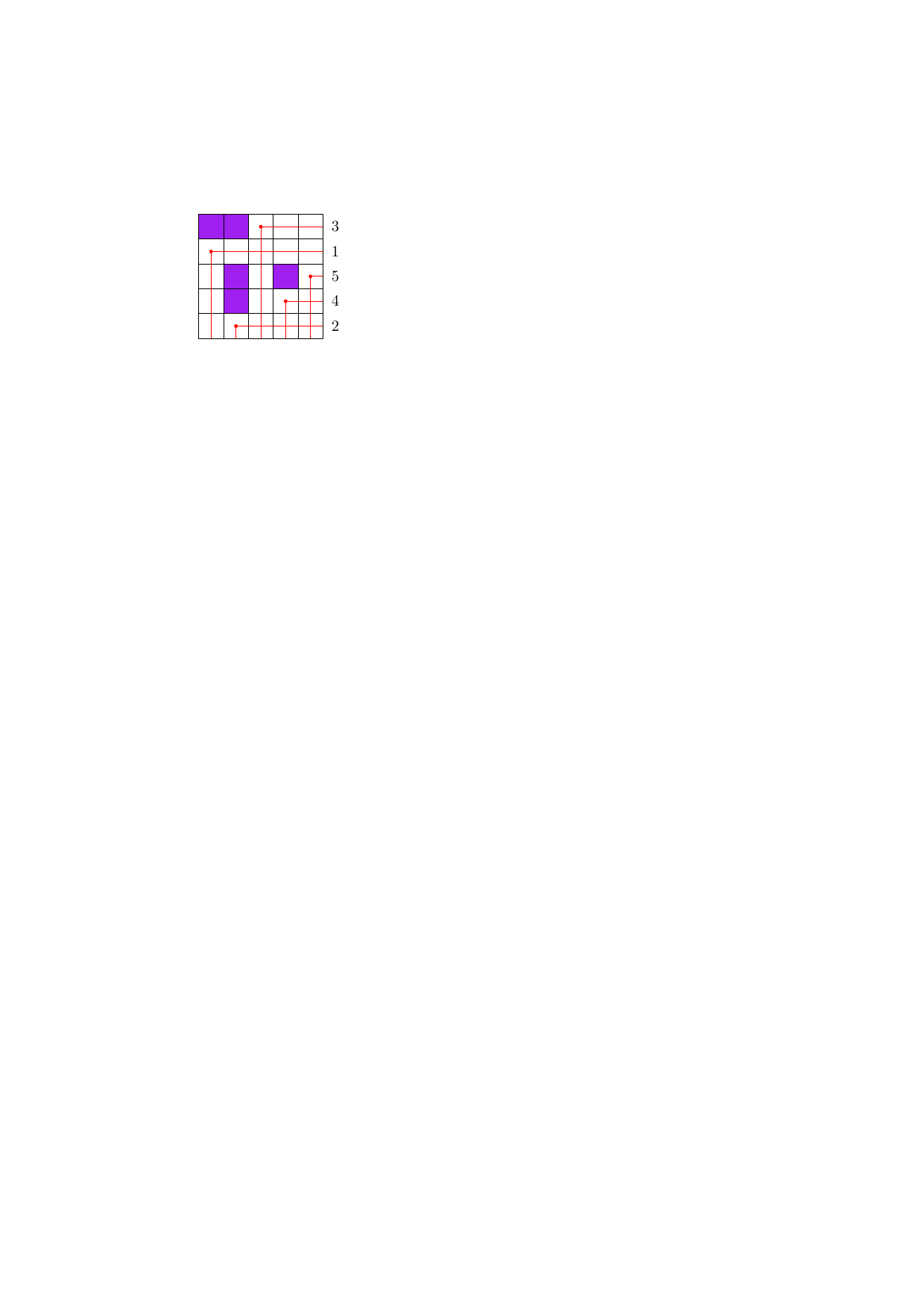}
\caption{The Rothe diagram for $w = 31542$.}
\label{fig:31542-rothe}
\end{figure}
\end{example}

\begin{defn}[\cite{rs98}]
A diagram $D$ is called \emph{\%-avoiding} when $i_2 \in D_{j_1}$ and $i_1 \in D_{j_2}$ for $i_1 < i_2$ and $j_1 < j_2$ implies $i_1 \in D_{j_1}$ or $i_2 \in D_{j_2}$.
\end{defn}

Equivalently, a diagram is \%-avoiding if it does not have a pair of rows and a pair of columns to which its restriction looks like the configuration in Figure \ref{fig:percentage_avoiding}.

\begin{figure}
	\begin{center}
		\includegraphics{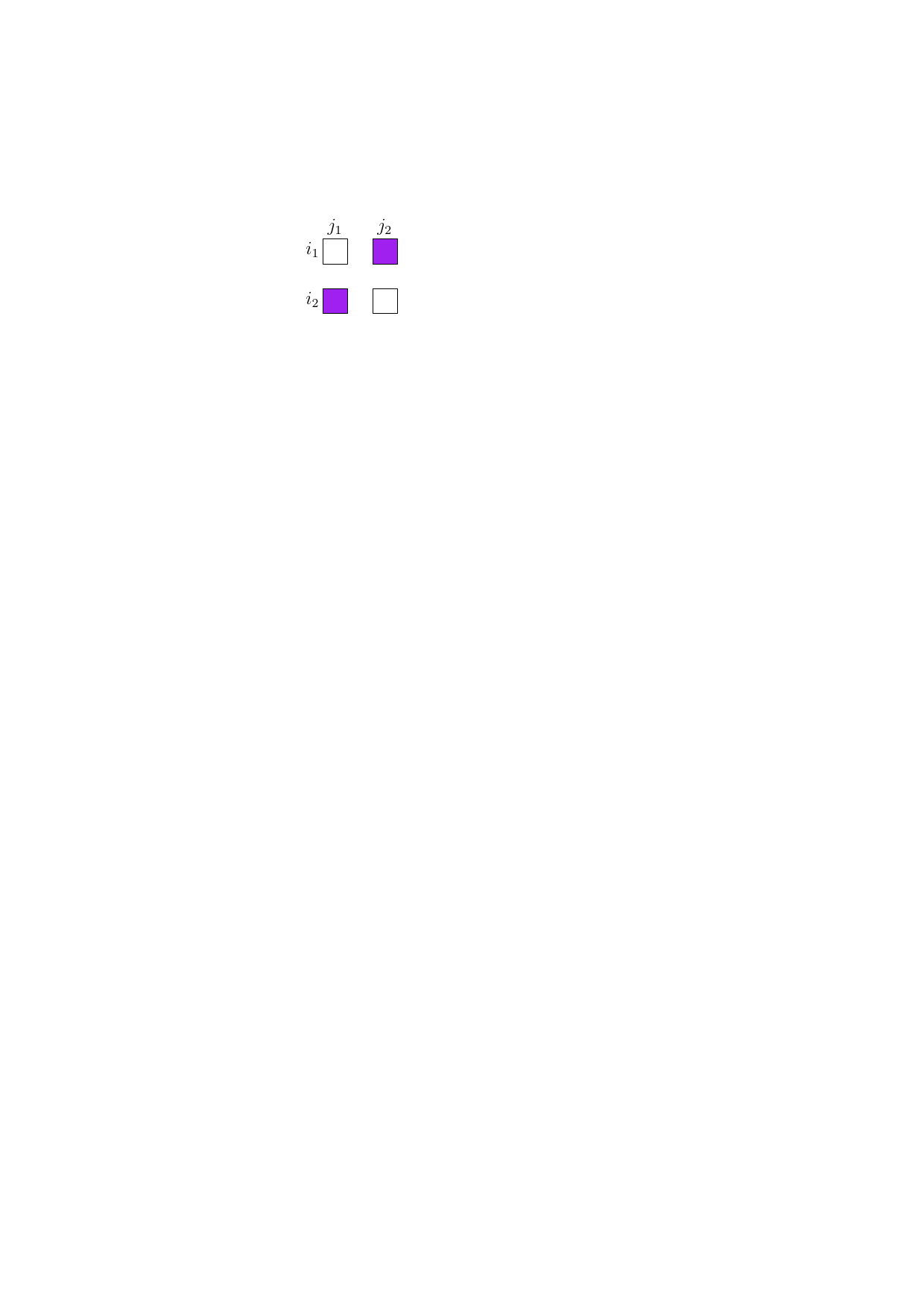}
	\end{center}
	\caption{A forbidden configuration in a \%-avoiding diagram.}
	\label{fig:percentage_avoiding}
\end{figure}

Given a \%-avoiding diagram $D$, we describe an algorithm to compute its \emph{double orthodontic sequence}
\[
\mathbf K(D) = (K_1, \dots, K_n), \hspace{0.8em} \mathbf i(D) = (i_1, \dots, i_\ell), \hspace{0.8em} \mathbf j(D) = (j_1, \dots, j_\ell), \hspace{0.8em} \mathbf M(D) = (M_1, \dots, M_\ell).
\]

Begin by setting $K_i \colonequals \{j\colon D_j = [i]\}$, and let $D_-$ denote the diagram obtained by replacing any such columns with the empty column. If $D_-$ is empty, then $\mathbf i = \mathbf j = \mathbf M$ is the empty vector and the algorithm terminates.

An \emph{orthodontic step} applied to $D_-$ outputs a diagram $(D_-)''$ which we now describe. A \emph{missing tooth} of a column $C\subseteq[n]$ is an integer $i\in[n]$ so that $i\not\in C$ and $i+1\in C$; any nonempty column of $D_-$ has a missing tooth. Set $i_1$ to be the smallest missing tooth of the leftmost nonempty column $(D_-)_j$ of $D_-$. Set $j_1$ to be $j - \#\{a\leq i_1\colon a\not\in (D_-)_j\}$. Now swap rows $i_1$ and $i_1+1$ to get a diagram $(D_-)'$. Any column of $(D_-)'$ which is a standard interval is necessarily equal to $[i_1]$; set $M_1 = \{j\colon (D_-)'_j = [i_1]\}$. Let $(D_-)''$ denote the diagram obtained from $(D_-)'$ by removing all columns equal to $[i_1]$. 

\begin{lem}[{\cite[pg.\ 12]{magyar98}}]
Any \%-avoiding diagram $D$ will be the empty diagram after sufficiently many orthodontic steps.
\end{lem}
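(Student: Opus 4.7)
The plan is to exhibit a $\ZZ_{\geq 0}$-valued monovariant that strictly decreases under each orthodontic step. The choice is
\[
M(D) \colonequals \sum_{(i,j) \in D} i,
\]
the total of row-indices of all cells of $D$. Since $M \geq 0$, termination follows once we show (i) the orthodontic step preserves \%-avoiding-ness (so that it may be iterated), and (ii) each step cuts $M$ by at least $1$.

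For (ii), let $c_1$ (respectively $c_{1+}$) count the columns of $D$ containing $i_1$ but not $i_1+1$ (respectively $i_1+1$ but not $i_1$); columns containing both or neither of rows $i_1, i_1+1$ contribute $0$ to $\Delta M$ under the row swap. Thus $\Delta M = c_1 - c_{1+}$ from the swap, and deleting $[i_1]$-columns only lowers $M$ further. The leftmost nonempty column $D_j$ contributes $+1$ to $c_{1+}$ by the very definition of $i_1$ as its smallest missing tooth. The main assertion is that $c_1 = 0$: any column $D_{j'}$ with $j' > j$, $i_1 \in D_{j'}$, and $i_1+1 \notin D_{j'}$, combined with $i_1+1 \in D_j$ and $i_1 \notin D_j$, would exhibit exactly the configuration forbidden by the \%-avoiding condition on rows $\{i_1,i_1+1\}$ and columns $\{j,j'\}$. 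Columns to the left of $j$ are empty. Hence $\Delta M \leq -1$.

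For (i), write $D'$ for the diagram obtained from $D$ by swapping rows $i_1$ and $i_1+1$. A forbidden \%-pattern in $D'$ supported on rows $\{i_1', i_2'\}$ pulls back through the transposition to a configuration in $D$. A case analysis on $|\{i_1', i_2'\} \cap \{i_1, i_1+1\}| \in \{0, 1, 2\}$ shows that in the first two cases the pullback is itself a forbidden pattern in $D$, contradicting \%-avoiding-ness. In the remaining case $\{i_1',i_2'\} = \{i_1, i_1+1\}$, the pullback is a ``$/$''-configuration in $D$ whose upper-left cell is $(i_1, j_1')$ for some $j_1'$; but $j_1' < j$ is impossible (columns left of $j$ are empty), $j_1' = j$ contradicts $i_1 \notin D_j$, and $j_1' > j$ is ruled out by the same forbidden-pattern argument used to prove $c_1 = 0$. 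Deleting columns clearly preserves \%-avoiding-ness, so the step is iterable.

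The only real obstacle is choosing the right monovariant; once $M$ is in hand, both (i) and (ii) reduce to the same local application of the \%-avoiding condition to rows $\{i_1, i_1+1\}$ together with column $j$, so the two halves of the argument are essentially one forbidden-pattern computation used twice.
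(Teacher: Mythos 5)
The paper does not prove this lemma; it cites Magyar \cite{magyar98}, so your self-contained argument fills in a citation rather than duplicating an argument that appears in the text. Your proof is correct, and it is the natural one. The monovariant $M(D) = \sum_{(i,j)\in D} i$ works, and the single structural fact doing all the work is your observation that $c_1 = 0$: no column of $D_-$ contains $i_1$ without also containing $i_1+1$, since such a column $D_{j'}$ together with the leftmost nonempty column $D_j$ (which contains $i_1+1$ but not $i_1$, and satisfies $j < j'$) would exhibit the forbidden \%-pattern on rows $\{i_1, i_1+1\}$ and columns $\{j,j'\}$. From this both halves follow: the row swap only ever moves boxes up, so $M$ strictly drops (and removing $[i_1]$-columns afterwards only helps), and the swap preserves \%-avoidingness.

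One small labelling slip worth correcting in part (i): in the case $\{i_1',i_2'\} = \{i_1, i_1+1\}$, the pullback of the forbidden pattern through the transposition $s_{i_1}$ has its boxes at $(i_1, c)$ and $(i_1+1, d)$ with $c < d$, i.e.\ on the \emph{main} diagonal, a ``$\backslash$''-configuration rather than the anti-diagonal ``$/$''-configuration that \%-avoidingness forbids. This is exactly why that case cannot be dismissed by \%-avoidingness of $D$ alone and instead needs $c_1 = 0$, which you do invoke correctly; your case analysis on the column $j_1'$ of the upper-left box makes sense only if $(i_1, j_1')$ is a box, which is the ``$\backslash$'' situation. With that label changed, the argument is complete.
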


Repeatedly apply orthodontic steps to the resulting diagrams $(D_-)''$, keeping track of the data $\mathbf i, \mathbf j, \mathbf M$ at each step, until the diagram is empty. These diagrams form the double orthodontic sequence of $D$.

\begin{example}
The orthodontic sequence for the Rothe diagram $D(31542)$ is shown in Figure~\ref{fig:31542-double-orth}.
\begin{figure}[ht]
\centering
\includegraphics[scale=0.8]{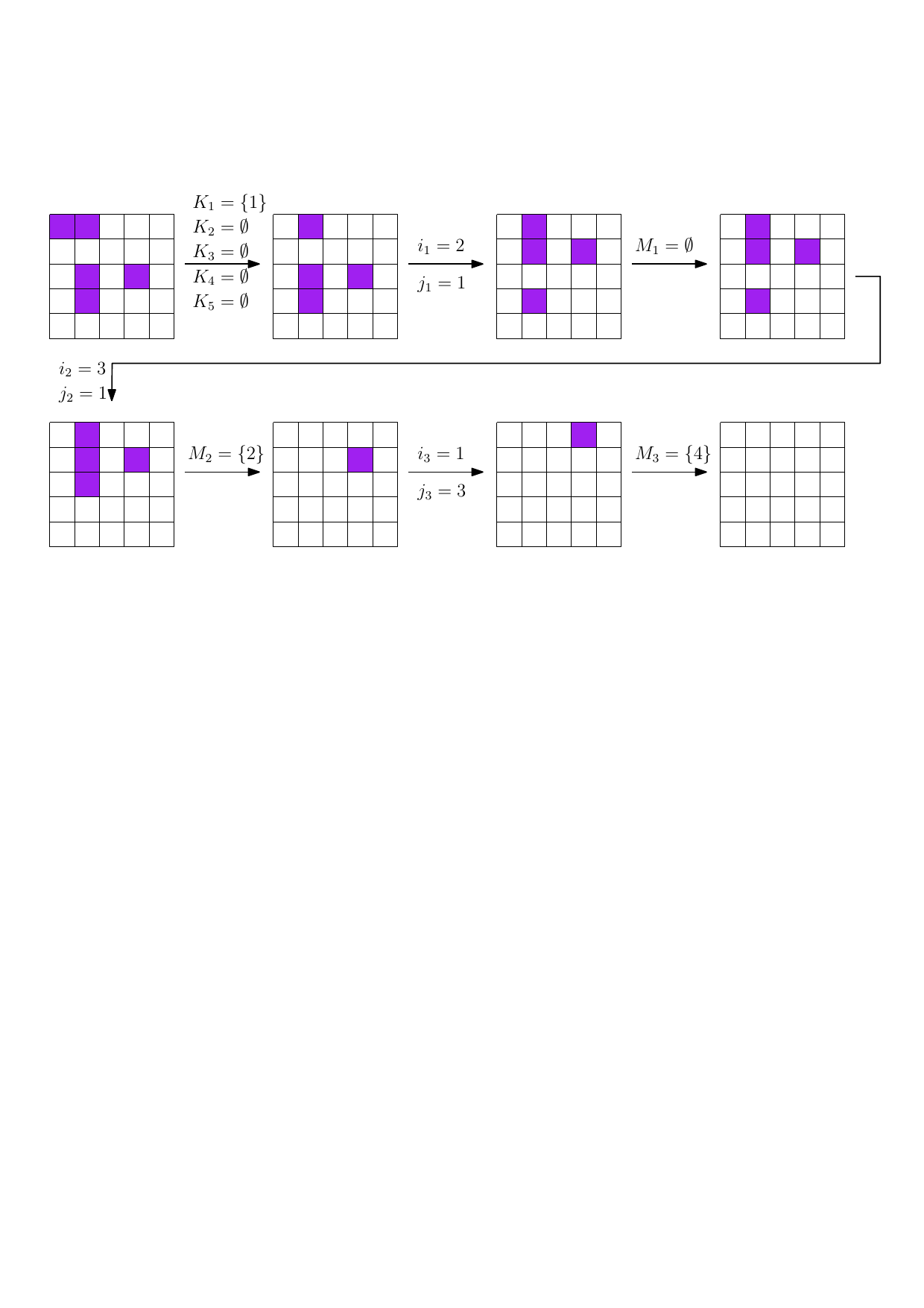}
\caption{The double orthodontic sequence for $w = 31542$.}
\label{fig:31542-double-orth}
\end{figure}
\end{example}

For a polynomial $f\in \CC[x_1, \dots, x_n, y_1, \dots, y_n]$, define the operators
\begin{align*}
\pi_{i,j} &\colonequals \del_i((x_i + y_j)f),\\
\overline\pi_{i,j}&\colonequals \overline\del_i((x_i + y_j - x_iy_j)f),
\end{align*}
and for $i\in[n]$ and $M\subseteq[n]$ define the quantities
\[
\omega_i^M\colonequals \prod_{\substack{j\in[i]\\m\in M}}(x_j + y_m),\qquad\overline\omega_i^M\colonequals \prod_{\substack{j\in [i]\\m\in M}} (x_j + y_m - x_jy_m).
\]
\begin{defn}Let $D$ be a \%-avoiding diagram with double orthodontic sequence $\mathbf K, \mathbf i, \mathbf j, \mathbf M$. Define
\[
\mathscr G_D(\mathbf x,\mathbf y)\colonequals \overline\omega_1^{K_1}\overline\omega_2^{K_2}\dots\overline\omega_n^{K_n}\overline\pi_{i_1,j_1}(\omega_{i_1}^{M_1}\overline\pi_{i_2,j_2}(\omega_{i_2}^{M_2}\dots\overline\pi_{i_\ell,j_\ell}(\omega_{i_\ell}^{M_\ell})\dots)),
\]
and write
\[
\mathscr S_D(\mathbf x,\mathbf y)\colonequals \omega_1^{K_1}\omega_2^{K_2}\dots\omega_n^{K_n}\pi_{i_1,j_1}(\omega_{i_1}^{M_1}\pi_{i_2,j_2}(\omega_{i_2}^{M_2}\dots\pi_{i_\ell,j_\ell}(\omega_{i_\ell}^{M_\ell})\dots)).\qedhere
\]
\end{defn}
\begin{rem}
\label{rem:sd-nonempty}
It can be shown by inducting down the orthodontic sequence that $\mathbf x^D$ appears in $\mathscr S_D$ with coefficient $1$. As $\mathscr S_D$ is nonzero, it follows that $\mathscr S_D$ is the lowest degree part of $\mathscr G_D$.
\end{rem}
\begin{rem}
The polynomial $\mathscr G_D(\mathbf x, \mathbf y)$ is not invariant under permuting columns, in contrast to previous work \cite{magyar98, mss22} on the orthodontia algorithm. This is by design: the Rothe diagram $D(2413)$ can be obtained from the Rothe diagram $D(132)$ by permuting the columns and adding a column equal to $\{1,2\}$, and although the ordinary Grothendieck polynomials satisfy $\mathfrak G_{2413}(\mathbf x) = x_1x_2 \mathfrak G_{132}(\mathbf x)$, there does not exist a polynomial $g(\mathbf x, \mathbf y)$ so that $\mathfrak G_{2413}(\mathbf x, \mathbf y) = g(\mathbf x, \mathbf y) \cdot \mathfrak G_{132}(\mathbf x, \mathbf y)$.
\end{rem}

\subsection*{Orthodontic sort order on permutations}
Following \cite{mss22}, we recall some basic facts about sorted permutations and the sorting projection.

In what follows, a \emph{standard interval} is a set of the form $[j]$ for some $j\geq 0$. Recall that a permutation $w \in S_n$ is called \emph{dominant} if it is $132$-avoiding. A permutation $w$ is dominant if and only if its Rothe diagram $D(w) = (D(w)_1, \dots, D(w)_n)$ consists only of standard intervals, which necessarily satisfy $\#D(w)_1 \geq \dots \geq \#D(w)_n$.
\begin{defn}
Fix a permutation $w \in S_n$. The \emph{primary column data} of $w$, denoted $(h,C,\alpha,i_1,\beta)$, is defined as follows.

If $w$ is not dominant, the diagram $D(w)$ has a column which is not a standard interval. Set $h$ to be the smallest integer such that the column $D(w)_{h+1}$ is not a standard interval. Set $C$ to be the column $D(w)_{h+1}\subseteq [n]$. Set $\alpha$ to be the largest integer such that $[\alpha]\subseteq C$. Set $i_1$ to be the smallest missing tooth of $C$. Lastly, set $\beta=i_1 - \alpha$, the size of the ``uppermost gap'' of $C$.

If $w$ is dominant, set $h=n$, $C=\emptyset$, $\alpha=0$, $i_1=n$, and $\beta=n$. 
\end{defn}
\begin{lem}[{\cite[Lem 3.4]{mss22}}]
\label{lem:sigma-sort}
Any permutation $w$ restricts to a bijection $[\alpha+1, i_1]\to [h-\beta+1,h]$. The corresponding permutation $\sigma\in S_\beta$ is dominant.
\end{lem}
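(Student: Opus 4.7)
The dominant case, where $h = n$, $C = \emptyset$, $\alpha = 0$, $i_1 = n$, $\beta = n$, is tautological, so I would assume $w$ is not dominant and abbreviate $a_j := |D(w)_j|$ for $j \in [h]$ (so that $D(w)_j = [a_j]$). My first step is to trap $w([\alpha+1, i_1])$ inside $[h]$: since $i_1 + 1 \in C$ we have both $w(i_1+1) > h+1$ and $w^{-1}(h+1) > i_1+1$, and for $i \in [\alpha+1, i_1]$ the inequality $i < w^{-1}(h+1)$ combined with $i \notin C$ forces $w(i) \leq h+1$, while $i \neq w^{-1}(h+1)$ rules out $w(i) = h+1$.

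To pin the image $V := w([\alpha+1, i_1])$ down to $[h-\beta+1, h]$, I would establish two monotonicity facts about the $a_j$. The first, $a_j \geq \alpha$ for $j \in [h]$, follows because each $i \in [\alpha] \subseteq C$ satisfies $w(i) \geq h+2 > j$ and $i < w^{-1}(j)$ (since $w([\alpha]) \subseteq [h+2, n]$), so $i \in D(w)_j$. The second, $a_1 \geq a_2 \geq \cdots \geq a_h$, follows by contradiction: if $a_{j_1} < a_{j_2}$ with $j_1 < j_2 \leq h$, then $a_{j_1}+1 \in D(w)_{j_2}$ forces $w(a_{j_1}+1) > j_2 > j_1$, while $a_{j_1}+1 \notin D(w)_{j_1}$ then forces $w^{-1}(j_1) \leq a_{j_1}$, giving the absurdity $w^{-1}(j_1) \in D(w)_{j_1}$. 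Now suppose $V \neq [h-\beta+1, h]$; then some $j_1 \in V$ is smaller than some $j_0 \in [h] \setminus V$. Since $w^{-1}(j_0) \notin [\alpha] \cup [\alpha+1, i_1] \cup \{i_1+1\}$ (the last exclusion using $w(i_1+1) > h+1 > j_0$), we get $w^{-1}(j_0) \geq i_1 + 2$; thus $i_1+1 \in D(w)_{j_0}$ and $a_{j_0} \geq i_1+1$. But then $w^{-1}(j_1) \in [\alpha+1, i_1] \subseteq D(w)_{j_0}$ would force $j_1 > j_0$, contradicting $j_1 < j_0$. Hence $V$ consists of the top $\beta$ values of $[h]$, i.e.\ $V = [h-\beta+1, h]$.

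Finally I would compute the columns of $D(\sigma)$ using the identifications $[\alpha+1, i_1] \leftrightarrow [\beta]$ (via $i \mapsto i - \alpha$) and $[h-\beta+1, h] \leftrightarrow [\beta]$ (via $j \mapsto j - (h-\beta)$): a direct translation gives $D(\sigma)_k = (D(w)_{h-\beta+k} \cap [\alpha+1, i_1]) - \alpha$ for $k \in [\beta]$, which by the previous paragraph is the standard interval $[\tilde a_k]$ with $\tilde a_k = \min(a_{h-\beta+k}, i_1) - \alpha \geq 0$. Monotonicity of the $a_j$ yields monotonicity of the $\tilde a_k$, so $D(\sigma)$ is a Young diagram and $\sigma$ is dominant. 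The main obstacle is the monotonicity $a_1 \geq \cdots \geq a_h$, which cleanly exploits the standard-interval structure of every column $D(w)_j$ with $j \leq h$; with it in hand, both the image identification and the dominance of $\sigma$ fall out naturally.
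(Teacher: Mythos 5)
Your argument is correct, and it is necessarily a self-contained route: the paper does not prove this lemma but simply cites \cite[Lem 3.4]{mss22}, so there is no in-text proof to compare against. Your proof breaks cleanly into four checks, each of which holds up. First, $w([\alpha+1,i_1])\subseteq[h]$ follows from $i_1+1\in C=D(w)_{h+1}$, which gives $w(i_1+1)\geq h+2$ and $w^{-1}(h+1)\geq i_1+2$; combined with $[\alpha+1,i_1]\cap C=\emptyset$ this forces $w(i)\leq h$ for $i$ in that window. Second, $a_j\geq\alpha$ for $j\leq h$ because $w([\alpha])\subseteq[h+2,n]$ implies $w^{-1}(j)>\alpha$, so $[\alpha]\subseteq D(w)_j$. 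Third, the monotonicity $a_1\geq\cdots\geq a_h$ is the crux: your contradiction correctly passes from $a_{j_1}<a_{j_2}$ to $a_{j_1}+1\in D(w)_{j_2}$, then to $w(a_{j_1}+1)>j_1$, then to $w^{-1}(j_1)\leq a_{j_1}$, and finally to the impossibility $w^{-1}(j_1)\in D(w)_{j_1}$ (a row index is never in its own column of the Rothe diagram). This correctly exploits that all columns $D(w)_j$ with $j\leq h$ are standard intervals. Fourth, the pigeonhole step pinning $V=w([\alpha+1,i_1])$ to the top $\beta$ values of $[h]$ and the translation $D(\sigma)_k=(D(w)_{h-\beta+k}\cap[\alpha+1,i_1])-\alpha$ are both sound, and monotonicity of the $a_j$ then transfers to the $\tilde a_k$, giving dominance. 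One small remark: the $\min(a_{h-\beta+k},i_1)$ in your formula for $\tilde a_k$ is harmless but redundant, since once $V=[h-\beta+1,h]$ is known, $w^{-1}(h-\beta+k)\in[\alpha+1,i_1]$ forces $a_{h-\beta+k}<i_1$ anyway (the standard-interval column $[a_{h-\beta+k}]$ cannot contain $i_1+1$, nor can it reach $w^{-1}(h-\beta+k)$ itself). Overall this is a clean, elementary derivation and fills in a dependency the paper leaves external.
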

See Figure~\ref{fig:D68342751-and-sort} for an example.

For $w\in S_n$, write $\sigma(w)$ for the dominant permutation obtained by restricting $w$ to $[\alpha+1, i_1]$.
\begin{defn}
The permutation $w$ is \emph{sorted} if $\sigma(w)$ is the identity. The \emph{sorting} of $w$, denoted $w_\sort$, is the permutation obtained from $w$ by reordering $w(\alpha+1), \dots, w(i_1)$ to be in increasing order.
\end{defn}

\begin{example}
Let $w = 68342751$. The primary column data of $w$ is 
\[h=4,\,\,C=\{1,2,6\},\,\,\alpha=2,\,\,i_1=5,\,\,\beta=3. \] 
and $w$ restricts to a bijection $ \{3,4,5\}\to\{2,3,4\}$; the corresponding permutation $\sigma(w) \in S_3$ is $\sigma(w) = 231$. The sorting of $w$ is $w_\sort = 68234751$. The Rothe diagrams of $w$ and $w_\sort$ are displayed in Figure~\ref{fig:D68342751-and-sort}.
\begin{figure}[ht]
\centering
\includegraphics{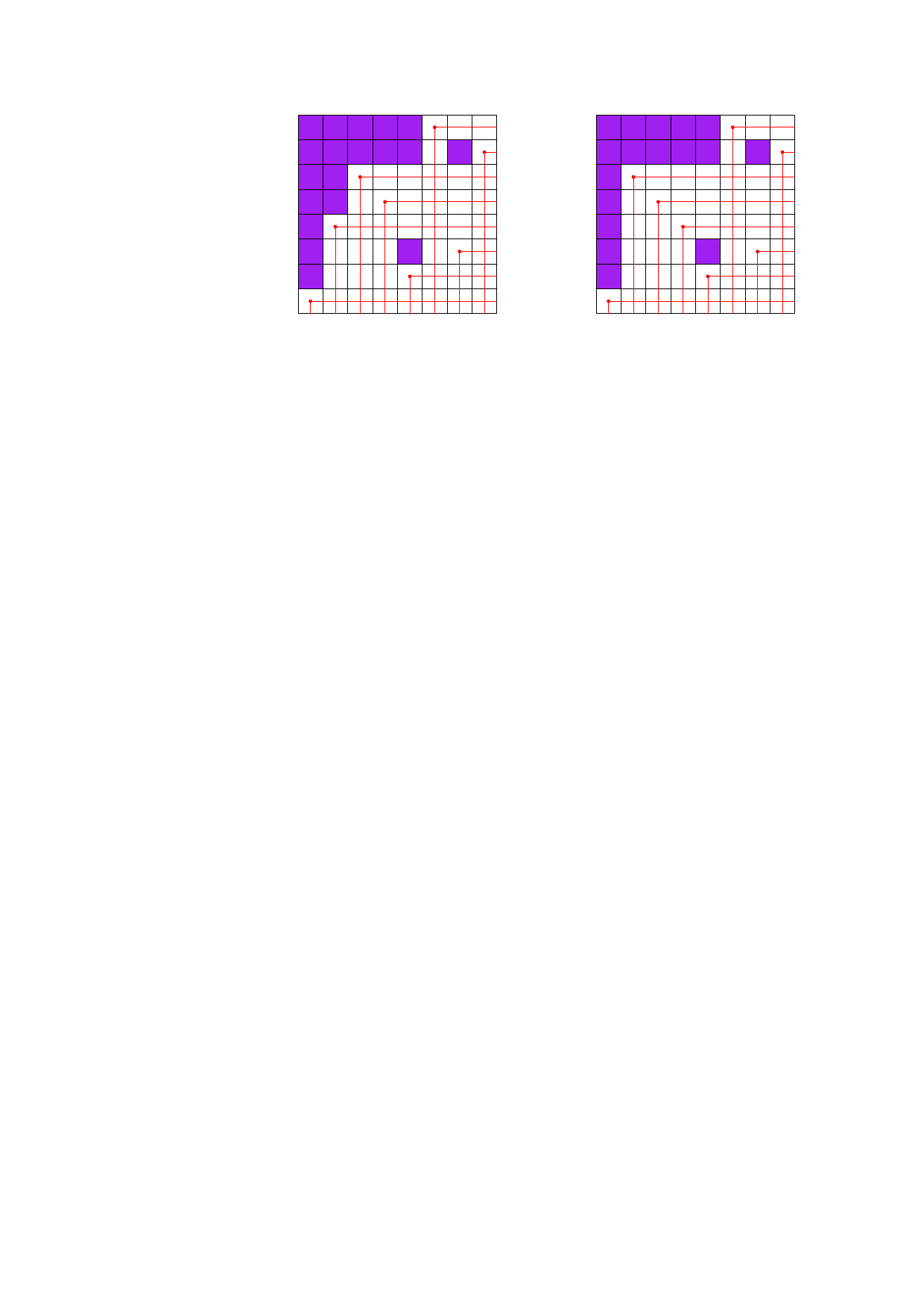}
\caption{The Rothe diagram of $w = 68342751$ on the left, and of its sorting $w_\sort = 68234751$ on the right. In this example, $\sigma(w) = 231$.}
\label{fig:D68342751-and-sort}
\end{figure}
\end{example}

\begin{defn}
\label{defn:os}
The \emph{orthodontic sort order} $\leq_\os$ is the reflexive and transitive closure of the relations
\begin{align*}
\label{eqn:os1}
w_{\textup{sort}}&\preceq w \tag{$\dagger$}\\
\label{eqn:os2} ws_{i_1}\dots s_\alpha&\preceq w\textup{ whenever $w$ is nonidentity and sorted}. \tag{$\ddagger$}
\end{align*}
\end{defn}
\begin{prop}[{\cite[Prop 5.6]{mss22}}]
The relation $\leq_\os$ is a partial order on $S_n$ and the identity permutation is the minimum element.
\end{prop}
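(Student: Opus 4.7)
The plan is to prove both claims via a single statistic $\phi : S_n \to \NN^3$ that strictly decreases under each nontrivial application of $(\dagger)$ or $(\ddagger)$. Given such $\phi$, antisymmetry of $\leq_\os$ follows immediately (no nontrivial cycles are possible), and minimality of $e$ follows by descent: starting from $w \neq e$, apply $(\dagger)$ if $w$ is not sorted, else $(\ddagger)$; each step strictly decreases $\phi$, so the process terminates at $e$. Note that any nonidentity dominant $w$ satisfies $w_\sort = e$ (by Lemma~\ref{lem:sigma-sort}, $\sigma(w)=w$ on the full interval), so such $w$ reaches $e$ in one $(\dagger)$-step.

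The statistic I propose is
\[
\phi(w) \colonequals \bigl(n - h(w),\ n - \alpha(w),\ \ell(w)\bigr),
\]
ordered lexicographically, using the primary column data $(h, C, \alpha, i_1, \beta)$. The motivating intuition is that $(\dagger)$ preserves the primary column data while reducing $\ell$ (so $\ell$ serves as a tiebreaker), whereas $(\ddagger)$ modifies the first non-standard-interval column $C = D(w)_{h+1}$ so as to either promote $h$ or extend the prefix $[\alpha]$.

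For $(\dagger)$: by Lemma~\ref{lem:sigma-sort}, sorting the values at positions $[\alpha+1, i_1]$ permutes only the values in $\{h-\beta+1, \ldots, h\}$, so $w^{-1}(v)$ is unchanged for all $v \geq h+1$. It follows that $D(w_\sort)_j = D(w)_j$ for all $j \geq h+1$, so both $h$ and $\alpha$ are preserved. The standard factorization $\ell(w) = \ell(w_\sort) + \ell(\sigma(w))$ gives strict decrease in $\ell$ exactly when $\sigma(w) \neq e$, i.e., when $(\dagger)$ is applied nontrivially, so $\phi$ strictly decreases at the third coordinate.

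For $(\ddagger)$: let $w' := ws_{i_1}\cdots s_\alpha$; this is a cyclic shift of positions $[\alpha, i_1+1]$ of the sorted $w$, sending $w(i_1+1)$ to position $\alpha$. Using sortedness (so the gap values are exactly $h-\beta+1, \ldots, h$ in order) together with $w(\alpha), w(i_1+1) > h+1$ (since $\alpha, i_1+1 \in C$), a direct computation yields $D(w')_j = D(w)_j$ for $j \leq h-\beta$, $D(w')_j = [\alpha+1]$ for $j \in [h-\beta+1, h]$ (whereas $D(w)_j = [\alpha]$ there, so still standard intervals), and
\[
D(w')_{h+1} = (C \setminus \{i_1+1\}) \cup \{\alpha+1\}.
\]
Two cases arise: if $C = [\alpha] \cup \{i_1+1\}$, then $D(w')_{h+1} = [\alpha+1]$ is a standard interval, so $h(w') > h$ and the first coordinate of $\phi$ strictly decreases; otherwise $D(w')_{h+1}$ contains $[\alpha+1]$ together with some element $\geq i_1+2$, hence has a gap at $\alpha+2$ and is not a standard interval, so $h(w') = h$ while $[\alpha+1] \subseteq D(w')_{h+1}$ forces $\alpha(w') \geq \alpha+1$, giving lex decrease at the second coordinate. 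The main obstacle is this explicit column-by-column description of $D(w')$, which relies essentially on sortedness (to pin down the values in the gap) and on the $\%$-avoiding structure inherited from $D(w)$; once this description is in hand, the case analysis above completes the proof.
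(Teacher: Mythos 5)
The paper does not reprove this proposition; it is cited from \cite[Prop~5.6]{mss22}, so there is no in-paper argument to compare against. Your descent argument via the lexicographic statistic $\phi(w) = (n - h,\ n - \alpha,\ \ell(w))$ is a clean, self-contained proof: the key claims, that a nontrivial $(\dagger)$-step fixes $(h,\alpha)$ while strictly decreasing $\ell$ (using $\ell(w) = \ell(w_\sort) + \ell(\sigma(w))$ and the fact that $D(w_\sort)_{h+1} = D(w)_{h+1} = C$), and that a $(\ddagger)$-step either strictly increases $h$ or fixes $h$ while strictly increasing $\alpha$, are correct, and I checked the column-by-column description of $D(w')$ on which the second claim rests. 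Antisymmetry and minimality of the identity then follow exactly as you say.

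One wrinkle worth flagging: Definition~\ref{defn:os} writes $(\ddagger)$ as $w s_{i_1}\cdots s_\alpha$, but this is inconsistent with Theorem~\ref{thm:os2-orth}(6), which uses $w' = w s_{i_1}\cdots s_{\alpha+1}$, and the former is literally undefined when $\alpha = 0$ (which occurs for nonidentity sorted $w$, e.g.\ $w = 132$). You read the product as the cyclic shift placing $w(i_1+1)$ at position $\alpha$, which is one transposition too many and implicitly assumes $\alpha \geq 1$ (you invoke $w(\alpha) > h+1$ ``since $\alpha\in C$''). Fortunately, the correct shorter shift to position $\alpha+1$ gives exactly the same columns $D(w')_j$ for $j \leq h+1$ that you compute, including $D(w')_{h+1} = (C\setminus\{i_1+1\})\cup\{\alpha+1\}$, and it also handles $\alpha = 0$ with no special case; since your lexicographic decrease under $(\ddagger)$ uses only the first two coordinates of $\phi$, the extra $s_\alpha$ (which changes $\ell$ but not $h$ or $\alpha$) does not affect the conclusion. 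Once you replace $w s_{i_1}\cdots s_\alpha$ by $w s_{i_1}\cdots s_{\alpha+1}$ throughout and drop the appeal to $\alpha\in C$, the proof is complete.
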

\section{Orthodontia and double Grothendieck polynomials}
We establish Theorem~\ref{thm:double-groth-master} by studying the relationship between $\mathfrak G_w$ and $\mathfrak G_{w_\sort}$ (Proposition~\ref{prop:os1-groth}), the relationship between $\mathscr G_{D(w)}$ and $\mathscr G_{D(ws_{i_1}\dots s_\alpha)}$ (Theorem~\ref{thm:os2-orth}), and then inducting on the orthodontic sort order (Definition~\ref{defn:os}).
\begin{prop}
\label{prop:rothe-of-sort}
Let $(h,C,\alpha,i_1,\beta)$ denote the primary column data of $w\in S_n$. Set $\lambda^\sigma_j \colonequals \#D(\sigma(w))_j$. The Rothe diagram $D(w)$ contains $D(w_\sort)$, and
\[
D(w) \setminus D(w_\sort) = \{(\alpha+a,h-\beta+b)\colon 1 \leq a\leq\lambda^\sigma_b \textup{ and } 1\leq b\leq \beta\}.
\]
\end{prop}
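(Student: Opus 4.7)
The plan is to work directly from the definition
\[
D(w) = \{(i,j) \colon j < w(i) \text{ and } w^{-1}(j) > i\},
\]
exploiting that $w$ and $w_\sort$ differ only in the values assigned to positions in $[\alpha+1,i_1]$, where both restrict to bijections onto $[h-\beta+1,h]$ by Lemma~\ref{lem:sigma-sort}.

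First I would show that for rows $i \leq \alpha$ or $i > i_1$, the rows of $D(w)$ and $D(w_\sort)$ agree cell-for-cell. The key observation is that for any value $j$ with $j \leq h-\beta$ or $j > h$, we have $w^{-1}(j) = w_\sort^{-1}(j)$; and for $j \in [h-\beta+1, h]$, both $w^{-1}(j)$ and $w_\sort^{-1}(j)$ lie in $[\alpha+1, i_1]$, so the test $w^{-1}(j) > i$ has the same (forced) outcome for both permutations. Combined with $w(i) = w_\sort(i)$ on this row range, this ensures that rows outside $[\alpha+1, i_1]$ contribute nothing to the symmetric difference.

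Next I would examine row $i = \alpha + a$ with $1 \leq a \leq \beta$ by splitting the column index $j$ into three ranges. The range $j > h$ is empty since $w(i), w_\sort(i) \leq h$. For $j \leq h - \beta$, the same argument as in the previous step shows that the contributions to $D(w)$ and $D(w_\sort)$ in this row coincide. For the interesting range $j = h - \beta + b$ with $b \in [\beta]$, writing $\sigma = \sigma(w) \in S_\beta$ so that $w(\alpha + a) = h - \beta + \sigma(a)$ and $w^{-1}(h - \beta + b) = \alpha + \sigma^{-1}(b)$, the two conditions $j < w(i)$ and $w^{-1}(j) > i$ translate exactly to $b < \sigma(a)$ and $\sigma^{-1}(b) > a$; that is, $(a, b) \in D(\sigma)$. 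For $w_\sort$, replacing $\sigma$ by the identity turns these into the incompatible $b < a$ and $b > a$, so $w_\sort$ contributes nothing in this range.

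Combining these analyses yields $D(w_\sort) \subseteq D(w)$ and $D(w) \setminus D(w_\sort) = \{(\alpha + a, h - \beta + b) : (a, b) \in D(\sigma)\}$. Finally, Lemma~\ref{lem:sigma-sort} says $\sigma$ is dominant, so each column of $D(\sigma)$ is a standard interval, i.e.\ $(a, b) \in D(\sigma)$ iff $b \in [\beta]$ and $a \in [\lambda^\sigma_b]$. This gives the claimed description. No step is really an obstacle; the only care required is in setting up the case split on $j$ cleanly, both to verify that rows outside $[\alpha+1, i_1]$ contribute nothing and to match the interesting range with $D(\sigma)$.
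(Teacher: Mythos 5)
Your proof is correct. It rests on the same core observation as the paper's: $w$ and $w_\sort$ agree outside positions $[\alpha+1,i_1]$, and on that block both restrict to bijections onto $[h-\beta+1,h]$, so the difference between $D(w)$ and $D(w_\sort)$ is confined to the $\beta\times\beta$ window of rows $[\alpha+1,i_1]$ and columns $[h-\beta+1,h]$, where it is governed by $D(\sigma(w))$. The paper argues column-by-column and leans on the structural fact that columns $1,\dots,h$ of $D(w)$ are standard intervals, so it can simply assert $D(w)_{h-\beta+b}=[\alpha+\lambda^\sigma_b]$ and $D(w_\sort)_{h-\beta+b}=[\alpha]$; you instead argue row-by-row directly from the defining inequalities $j<w(i)$ and $w^{-1}(j)>i$, translating the conditions inside the window to $(a,b)\in D(\sigma(w))$ and then invoking dominance of $\sigma(w)$ to rewrite $D(\sigma(w))_b=[\lambda^\sigma_b]$. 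Your version is somewhat longer but more self-contained, spelling out explicitly why the cells outside the window behave identically for $w$ and $w_\sort$ rather than citing the standard-interval structure; either route is fine.
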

Proposition~\ref{prop:rothe-of-sort} implies that $D(w_\sort)$ is obtained from $D(w)$ by removing bottom-aligned portions of standard interval columns.
\begin{proof}[Proof of Proposition~\ref{prop:rothe-of-sort}]
As $w_\sort$ is obtained from $w$ by reordering numbers $\{w^{-1}(j)\colon j \in [h-\beta+1,h]\}$, the columns $D(w_\sort)_j$ and $D(w)_j$ agree whenever $j\not\in [h-\beta+1,h]$. By definition of the primary column data and of the sorting of a permutation, $D(w)_{h-\beta+b} = [\alpha+\lambda^\sigma_b]$ and $D(w_\sort)_{h-\beta+b} = [\alpha]$.
\end{proof}
\begin{cor}[cf.\ {\cite[Prop 4.4]{mss22}}]
\label{cor:os1-orth}
Let $(h,C,\alpha,i_1,\beta)$ denote the primary column data of $w\in S_n$. Set 
\[
\mathcal S_a \colonequals \left\{h-\beta+j\colon\begin{array}{c} j\in [\beta]\\ \#D(\sigma(w))_j = a\end{array}\right\}.
\]
Write $\mathbf K(w) = (K_1, \dots, K_n)$ and $\mathbf K(w_\sort) = (K_1', \dots, K_n')$. Then
\[
\mathbf i(w) = \mathbf i(w_\sort), \qquad  \mathbf j(w) =  \mathbf j(w_\sort), \qquad  \mathbf M(w) = \mathbf M(w_\sort),
\]
and
\[
K_j = \begin{cases} K_j' &\textup{ if } j\leq \alpha-1,\\ K_j' \setminus \{h-\beta+1, \dots, h \} &\textup{ if } j = \alpha,\\ K_j' \cup \mathcal S_a &\textup{ if } j = \alpha + a, \textup{ where } a\in[\beta],\\
K_j' &\textup{ if } j \geq i_1 + 1.  \end{cases}
\]
In particular,
\[
\mathscr G_{D(w)} = \left(\prod_{(a,b)\in D(w)\setminus D(w_\sort)} (x_a + y_b - x_ay_b)\right) \cdot\mathscr G_{D(w_\sort)}.
\]
\end{cor}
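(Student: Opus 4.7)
The plan is to unfold the initial step of the orthodontia algorithm for both $D(w)$ and $D(w_\sort)$ using Proposition~\ref{prop:rothe-of-sort}, and observe that after this step the two diagrams become identical.

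By Proposition~\ref{prop:rothe-of-sort}, $D(w)$ and $D(w_\sort)$ agree on all columns outside $\{h-\beta+1, \ldots, h\}$, while within that range one has $D(w)_{h-\beta+b} = [\alpha+\lambda^\sigma_b]$ and $D(w_\sort)_{h-\beta+b} = [\alpha]$. Crucially, every such column is a standard interval in \emph{both} diagrams, so each is peeled off during the initial $\mathbf K$-step. In $D(w_\sort)$, every column in the range lands in $K_\alpha'$; in $D(w)$, the column $h-\beta+b$ lands in $K_{\alpha + \lambda^\sigma_b}$, so grouping by the value $a = \lambda^\sigma_b \in [\beta]$ produces the sets $\mathcal S_a$ of the statement. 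All other columns contribute identically to $\mathbf K$ and $\mathbf K'$, yielding the case-by-case formula; the slight asymmetry at $j = \alpha$ arises because columns with $\lambda^\sigma_b = 0$ remain equal to $[\alpha]$ in $D(w)$ and so continue to contribute to $K_\alpha$.

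After this initial step, both $D(w)_-$ and $D(w_\sort)_-$ have the columns $h-\beta+1, \ldots, h$ replaced by the empty column while every other column is unchanged; hence $D(w)_- = D(w_\sort)_-$. All subsequent orthodontic steps therefore proceed on identical diagrams, giving $\mathbf i(w) = \mathbf i(w_\sort)$, $\mathbf j(w) = \mathbf j(w_\sort)$, and $\mathbf M(w) = \mathbf M(w_\sort)$.

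For the final identity, since the $\overline\pi$-tower portions of $\mathscr G_{D(w)}$ and $\mathscr G_{D(w_\sort)}$ are identical, the quotient reduces to a ratio of $\overline\omega$-prefactors. Substituting the $\mathbf K$-formula and cancelling common factors, this ratio simplifies to
\[
\prod_{a=1}^{\beta} \frac{\overline\omega_{\alpha+a}^{\mathcal S_a}}{\overline\omega_\alpha^{\mathcal S_a}} = \prod_{a=1}^{\beta} \prod_{\substack{\alpha < j \leq \alpha + a \\ m \in \mathcal S_a}}(x_j + y_m - x_jy_m),
\]
and reindexing via $\mathcal S_a = \{h-\beta+b : \lambda^\sigma_b = a\}$ converts the right-hand side into the product over $(j, m) \in D(w) \setminus D(w_\sort)$ as described in Proposition~\ref{prop:rothe-of-sort}. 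The argument is essentially bookkeeping — Proposition~\ref{prop:rothe-of-sort} does all the geometric work — so no substantive obstacle is anticipated beyond carefully separating the $\lambda^\sigma_b = 0$ columns in the $K_\alpha$ accounting.
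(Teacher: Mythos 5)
Your approach is the same as the paper's, which is in fact terser than yours: the paper simply appeals to Proposition~\ref{prop:rothe-of-sort} and asserts that since $D(w)$ and $D(w_\sort)$ agree after removing their standard-interval columns, the sequences $\mathbf i, \mathbf j, \mathbf M$ agree, and the $\mathbf K$-formula follows from the description of which columns were removed. Your unpacking of the bookkeeping is accurate. Two small remarks.

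First, your description of the transition to $D(w)_-$ is slightly off: it is not just columns $h-\beta+1,\dots,h$ that are emptied but \emph{all} standard-interval columns of $D(w)$ (columns $1,\dots,h$ and any other column equal to some $[k]$). The conclusion $D(w)_- = D(w_\sort)_-$ still holds, since both diagrams have identical columns outside $[h-\beta+1,h]$ and those columns are standard intervals in both, but you should state the $\mathbf K$-step as removing all standard-interval columns at once.

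Second, and more substantively, you correctly observe that columns with $\lambda^\sigma_b = 0$ stay equal to $[\alpha]$ in $D(w)$ and hence remain in $K_\alpha$, but this observation actually contradicts the formula $K_\alpha = K_\alpha' \setminus \{h-\beta+1,\dots,h\}$ as literally stated in the corollary. Since $\sigma(w) \in S_\beta$ is dominant, one always has $\lambda^\sigma_\beta = \#D(\sigma(w))_\beta = 0$, so the set $\{h-\beta+b : \lambda^\sigma_b = 0\}$ is always nonempty; the correct formula is $K_\alpha = K_\alpha' \setminus (\mathcal S_1 \cup \dots \cup \mathcal S_\beta)$, which removes only the columns that changed. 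Your cancellation computation
\[
\prod_{a=1}^{\beta} \frac{\overline\omega_{\alpha+a}^{\mathcal S_a}}{\overline\omega_\alpha^{\mathcal S_a}}
\]
is implicitly using this corrected formula (the denominator involves $\cup_a \mathcal S_a$, not all of $\{h-\beta+1,\dots,h\}$), so your final product identity is right. But as written, your proposal verifies a slightly different $K_\alpha$ formula than the one in the statement; you should flag that discrepancy explicitly rather than folding it into a remark about ``careful accounting,'' since a reader comparing your argument to the displayed formula will otherwise think you have made an error.
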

\begin{proof}
Proposition~\ref{prop:rothe-of-sort} implies that the diagrams $D(w)$ and $D(w_\sort)$ agree after removing their standard interval columns. It follows that the orthodontic sequences $\mathbf i, \mathbf j, \mathbf M$ of $w$ and $w_\sort$ agree. The formulas for $K_j$ in terms of $K_j'$ hold because $D(w_\sort)$ is obtained by removing bottom-aligned portions of standard interval columns of $D(w)$. The relationship between $\mathscr G_{D(w)}$ and $\mathscr G_{D(w_\sort)}$ follows from the construction~\eqref{eqn:double-groth-master} of $\mathscr G_D$.
\end{proof}
\begin{lem}
\label{lem:CwEw}
Let $(h,C,\alpha,i_1,\beta)$ denote the primary column data of $w\in S_n$. For $j\in[h]$, set 
\[\lambda_j\colonequals \#D(w)_j \quad\mbox{and}\quad \nu_j\colonequals \#\{i\leq j\colon \#D(w)_i \geq w^{-1}(j)\}.\]
Fix any $P \in \PD(w)$. Then for every $j\in[h]$, the pipe $P^{(j)}$ begins by traversing $\lambda_j$ crossing tiles vertically, ends by traversing $\nu_j$ tiles horizontally, and otherwise traverses only bump tiles.
In particular, every tile in
\[
C_w \colonequals \left\{(i,j)\colon \begin{array}{c}1 \leq i \leq \lambda_j,\\ 1\leq j \leq h\end{array}\right\}
\]
is a cross, and every tile in
\[
E_w \colonequals \left\{(i,j)\colon \begin{array}{c} h-\beta+1 \leq i \leq h,\\\alpha+1 \leq j \leq i_1,\\i + j \leq h + \alpha + 1\end{array}\right\} \setminus C_w
\]
is an elbow.
\end{lem}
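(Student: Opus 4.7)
The plan is to induct on $j$ from $1$ to $h$, tracing the pipe $P^{(j)}$ step-by-step and deducing the $C_w$/$E_w$ assertions as consequences. Two preliminary observations streamline the argument. First, the standard-interval columns are weakly decreasing, $\lambda_1 \geq \lambda_2 \geq \dots \geq \lambda_h$: if $\lambda_j < \lambda_{j+1}$ then row $\lambda_j + 1$ lies in $D(w)_{j+1}$ but not $D(w)_j$, which together with $w(\lambda_j + 1) > j+1$ forces $w^{-1}(j) = \lambda_j + 1$ and hence $w(\lambda_j + 1) = j$, a contradiction. Second, for $i \leq j \leq h$, the condition $\#D(w)_i \geq w^{-1}(j)$ is equivalent to $(w^{-1}(j), i) \in D(w)$, which (using $w(w^{-1}(j)) = j > i$) reduces to $w^{-1}(i) > w^{-1}(j)$; hence $\nu_j = \#\{i < j : w^{-1}(i) > w^{-1}(j)\}$, exactly the number of pipes $P^{(i)}$ with $i < j$ that must cross $P^{(j)}$.

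The base case $j = 1$ is immediate: since $(\lambda_1, 1) \in D(w)$ and $(\lambda_1 + 1, 1) \notin D(w)$ give $w^{-1}(1) = \lambda_1 + 1$, pipe $P^{(1)}$ is confined to column $1$ until it exits at row $\lambda_1 + 1$, and accordingly $(1, 1), \dots, (\lambda_1, 1)$ are crosses, $(\lambda_1 + 1, 1)$ is an elbow, and $\nu_1 = 0$. For the inductive step, fix $j \leq h$, assume the claim for pipes $P^{(1)}, \dots, P^{(j-1)}$, and argue in three substeps. Substep (a): by a sub-induction on $i \in [\lambda_j]$, the tile $(i, j)$ must be a cross, for otherwise pipe $P^{(j)}$ would turn left at $(i, j)$, and by the inductive hypothesis combined with $\lambda_k \geq \lambda_j \geq i$ for $k < j$ would cross straight through the vertical-crossing tiles of pipes $P^{(j-1)}, \dots, P^{(1)}$ at row $i$, exiting the grid at row $i \leq \lambda_j < w^{-1}(j)$, a contradiction. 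Substep (b): the tile $(\lambda_j + 1, j)$ is an elbow. If $w^{-1}(j) = \lambda_j + 1$ this is forced by the exit row; otherwise $(\lambda_j + 1, j) \notin D(w)$ together with $w^{-1}(j) > \lambda_j + 1$ forces $w(\lambda_j + 1) < j$, and the inductive hypothesis applied to pipe $P^{(w(\lambda_j + 1))}$ pins down the configuration of tiles in row $\lambda_j + 1$ near column $j$, forcing $(\lambda_j + 1, j)$ to be an elbow. Substep (c): after the turn at $(\lambda_j + 1, j)$, pipe $P^{(j)}$ moves through the elbow region $E_w$ until it reaches row $w^{-1}(j)$ and executes its final $\nu_j$ horizontal crossings; the elbows in $E_w$ are forced by arguments analogous to (a), and the count $\nu_j$ matches because precisely $\nu_j$ pipes with top column $< j$ exit below row $w^{-1}(j)$ and hence must cross $P^{(j)}$ on its horizontal exit segment.

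The main obstacle I anticipate is substep (b), and the analogous arguments forcing elbows in the remaining tiles of $E_w$ when $w^{-1}(j) > \lambda_j + 1$. Unlike the clean contradiction in substep (a), here I need to identify which pipe occupies row $\lambda_j + 1$ at each column near $j$ and use this to force the elbow at $(\lambda_j + 1, j)$; this depends on the detailed primary column data $(h, C, \alpha, i_1, \beta)$ and on the Demazure-product behavior of the pipe dream when pipes cross multiple times. I expect a careful case analysis driven by the value $w(\lambda_j + 1)$ in relation to $j$, combined with the Rothe-diagram constraints and the monotonicity of the $\lambda_i$'s established in the preliminary step.
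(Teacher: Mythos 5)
Your overall framework (induct on $j$ along the standard-interval columns, trace $P^{(j)}$) matches the paper's, and your substep (a) is essentially the paper's argument for why the first $\lambda_j$ tiles in column $j$ must be crosses. The preliminary reformulation $\nu_j = \#\{i < j : w^{-1}(i) > w^{-1}(j)\}$ is also correct and is implicitly present in the paper. However, your substeps (b) and (c) diverge from the paper and, as you yourself flag, are where the argument breaks down.

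The key idea you are missing is a \emph{global crossing count}: once the $\lambda_j$ vertical crossings (with the pipes $P^{(w^{-1}(k))}$, $k \in [\lambda_j]$) and the $\nu_j$ horizontal crossings (with $P^{(k)}$, $k \in [\nu_j]$) are accounted for, \emph{all} inversions of $w$ involving $j$ have been exhausted. Any further crossing tile $(a,b)$ on $P^{(j)}$ must have $a > \lambda_j$ and $b > \nu_j$; such a tile cannot involve a pipe exiting at row $\leq \lambda_j$ (those stay in rows $\leq \lambda_j$) nor a pipe entering at column $\leq \nu_j$ (those stay in columns $\leq \nu_j$), so it would be a first crossing between $P^{(j)}$ and a pipe that never crosses $P^{(j)}$ in $\del(P)$ --- impossible. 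This single argument replaces your substeps (b) and (c) entirely and avoids the pitfall you anticipated. In contrast, your proposal to pin down the tile $(\lambda_j+1,j)$ by ``applying the inductive hypothesis to $P^{(w(\lambda_j+1))}$'' does not work as stated: the IH only determines where $P^{(m)}$ crosses, not which bump tiles it zigzags through, so it does not fix the configuration in row $\lambda_j+1$ near column $j$. Likewise, ``arguments analogous to (a)'' cannot force the elbows in $E_w$, since the clean contradiction in (a) (exit row too small) has no analogue there.

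Finally, the $E_w$ claim is handled by the paper via a separate structural argument that you do not anticipate: the pipes $P^{(j)}$ for $j \in [h-\beta+1, h]$ enter and exit the triangular subgrid $\{(i,j): h-\beta+1 \leq i \leq h,\ \alpha+1 \leq j \leq i_1,\ i+j \leq h+\alpha+1\}$ as a closed family, so the restriction of $P$ to that subgrid is a pipe dream for the dominant permutation $\sigma(w)$. Dominance forces its crosses to be exactly $D(\sigma(w))$, which sits inside $C_w$; hence the complementary tiles $E_w$ are all elbows. I would encourage you to replace your tile-by-tile tracing in (b) and (c) with the inversion-count argument, and to add the restriction-to-$\sigma(w)$ argument for $E_w$.
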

\begin{example}
Figure~\ref{fig:typical-pd} gives an example of a typical pipe dream, with the structure guaranteed by Lemma~\ref{lem:CwEw} highlighted.
\begin{figure}[ht]
\centering
\includegraphics{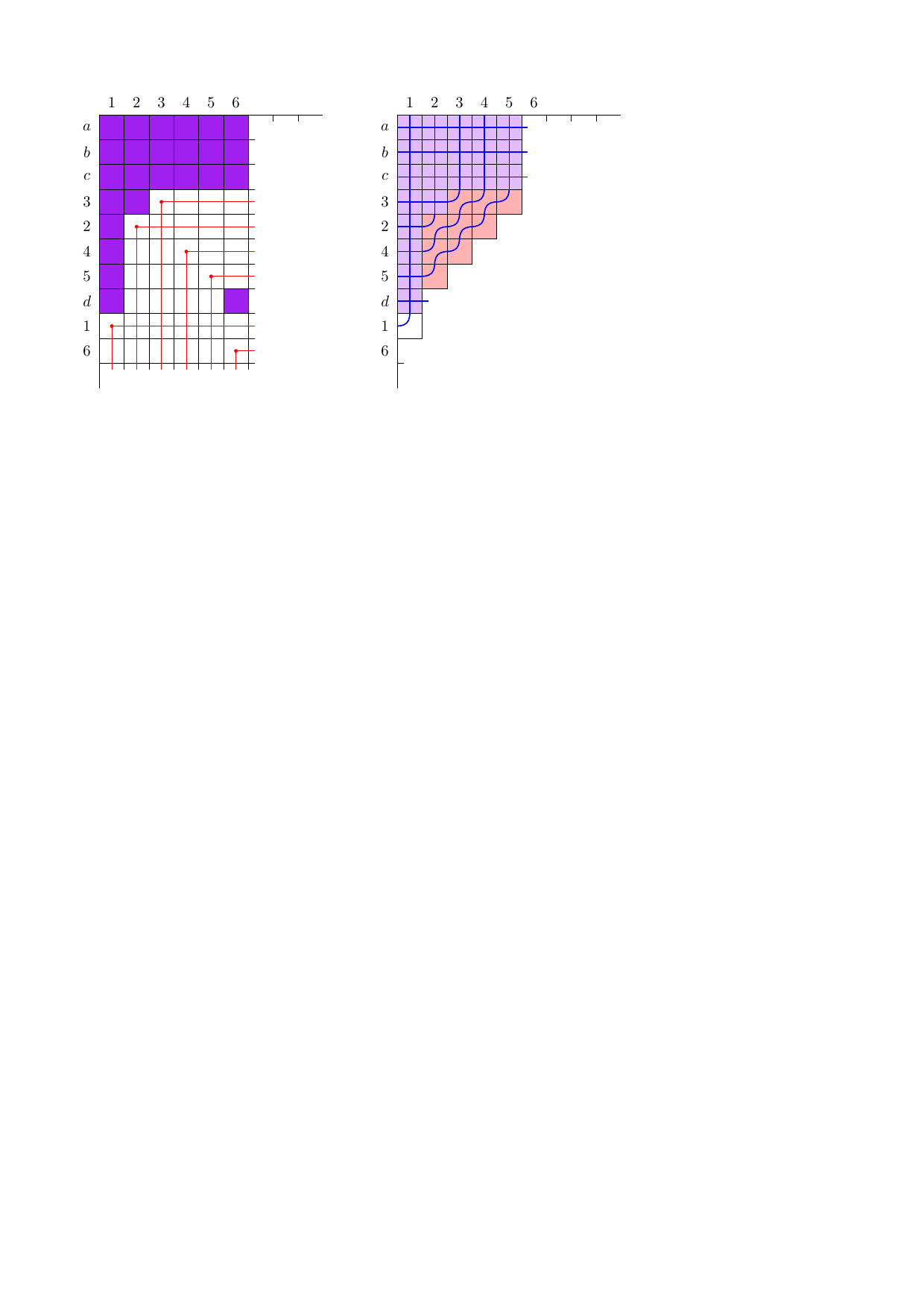}
\caption{Left: The Rothe diagram of a permutation $w = abc3245d16...$, for $a,b,c,d > 6$. Right: A typical pipe dream $P\in\PD(w)$. The boxes in $C_w$ are shaded purple and the boxes in $E_w$ are shaded red.}
\label{fig:typical-pd}
\end{figure}
\end{example}
\begin{proof}[Proof of Lemma~\ref{lem:CwEw}]
Induct on $j$. The pipe $P^{(1)}$ necessarily traveres $\lambda_1$ cross tiles vertically before traversing a bump tile to exit at the $w^{-1}(1)$\textsuperscript{th} row. Now assume that, for all $k < j$, the pipes $P^{(k)}$ traverse $\lambda_k$ cross tiles vertically, then traverse bump tiles, before ending by traversing $\nu_k$ tiles horizontally.

As $\lambda_1 \geq \dots \geq \lambda_j$, the inductive hypothesis guarantees that all tiles
\[
\left\{(i,k)\colon \begin{array}{c} 1 \leq i \leq \lambda_j \\ 1 \leq k \leq j-1\end{array}\right\}
\]
are cross tiles. Thus if $P^{(j)}$ traverses a bump tile after traversing $a < \lambda_j$ cross tiles vertically, pipe $P^{(j)}$ is forced to travel horizontally  until exiting at the $(a+1)$\textsuperscript{th} row. As $a + 1 \leq \lambda_j < w^{-1}(j)$, this is a contradiction.

As $\lambda_i$ is weakly decreasing, $\#D(w)_i \geq w^{-1}(j)$ if and only if $i \in [\nu_j]$. The inductive hypothesis guarantees that the pipe exiting at row $w^{-1}(j)$ traverses $\nu_j$ crossing tiles horizontally before traversing any bump tiles.

It remains to show that $P^{(j)}$ does not traverse any other cross tiles. To this end, observe that any such cross tile $(a,b)$ satisfies $a > \lambda_j$ and $b > \nu_j$. It follows that any such cross tile cannot involve $P^{(w^{-1}(k))}$ for $k \leq \lambda_j$ as the paths of these pipes are contained in $\{(a,b)\colon a \leq k\}$, and cannot involve $P^{(k)}$ for $k \leq \nu_j$ as the paths of these pipes are contained in $\{(a,b)\colon b \leq \nu_j\}$. In other words, any such cross tile involves $P^{(j)}$ and $P^{(k)}$ for $k\not\in w^{-1}([\lambda_j])$ and $k\not\in[\nu_j]$.

Then, the first part of the result follows from the fact that the inversions of $w$ involving $j$ are
\[
\{(k,j)\colon k \in w^{-1}([\lambda_j]) \textup{ or } k \in[\nu_j]\}.
\]
The rest of the lemma follows from the fact that the set of pipes entering the top edge of the triangular grid
\[
\left\{(i,j)\colon \begin{array}{c} h-\beta+1 \leq i \leq h,\\\alpha+1 \leq j \leq i_1,\\i + j \leq h + \alpha + 1\end{array}\right\},
\]
namely the set $\{P^{(j)}\colon h-\beta+1\leq j\leq h\}$, is equal to the set of pipes exiting the left edge, hence restricts to a pipe dream for $\sigma(w)$.
\end{proof}

\begin{cor}
\label{cor:crossings}
Let $(h,C,\alpha,i_1,\beta)$ denote the primary column data of $w\in S_n$, and let $\mathcal S\colonequals \{h-\beta+1, \dots, h\}$. Fix $P \in \PD(w)$.
\begin{itemize}
\item For $i,j\in\mathcal S$ and $k\not\in\mathcal S$, the pipes $P^{(i)}$ and $P^{(k)}$ cross if and only if $P^{(j)}$ and $P^{(k)}$ cross.
\item For $i\in\mathcal S$ and $j\in[n]$, the pipes $P^{(i)}$ and $P^{(j)}$ cross at most once.
\item For $i,j\in\mathcal S$, the pipes $P^{(i)}$ and $P^{(j)}$ can cross only at $C_w \setminus C_{w_\sort}$, and conversely every tile in $C_w \setminus C_{w_\sort}$ is a crossing of pipes $P^{(i)}$ and $P^{(j)}$ for $i,j\in\mathcal S$.
\end{itemize}
\end{cor}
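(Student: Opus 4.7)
The plan relies throughout on Lemma~\ref{lem:CwEw}: for $j \in [h]$, the pipe $P^{(j)}$ has an L-shape with vertical segment of $\lambda_j$ crosses in column $j$ and horizontal segment of $\nu_j$ crosses in row $w^{-1}(j)$, with only elbows in between. I would handle the three bullets in the order third, second, first, since the third is most directly visible and the first follows from the second via an inversion-parity argument.

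For the third bullet, any crossing of $P^{(i)}$ and $P^{(j)}$ with $i, j \in \mathcal{S}$ distinct must sit at the intersection of their L-shapes, namely at $(w^{-1}(j), i)$ or $(w^{-1}(i), j)$. By Lemma~\ref{lem:sigma-sort}, $w$ bijects $[\alpha+1, i_1]$ onto $\mathcal{S}$, so the candidate crossings have row in $[\alpha+1, i_1]$ and column in $\mathcal{S}$; combined with Proposition~\ref{prop:rothe-of-sort}, this identifies them with $C_w \setminus C_{w_\sort}$. Conversely, each $(r, c) \in C_w \setminus C_{w_\sort}$ satisfies $c \in \mathcal{S}$ and $r \in [\alpha+1, i_1]$, is a cross by Lemma~\ref{lem:CwEw}, and the pipe passing horizontally through it is $P^{(w(r))}$ with $w(r) \in \mathcal{S}$ by Lemma~\ref{lem:sigma-sort}.

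For the second bullet, fix $i \in \mathcal{S}$ and $j \in [n] \setminus \{i\}$. Since pipes move monotonically down and left, $P^{(j)}$ can intersect $P^{(i)}$'s vertical segment at most once and its horizontal segment at most once, giving at most two candidate crossings. If $j \in [h]$, these are $(w^{-1}(j), i)$ and $(w^{-1}(i), j)$; requiring both to be crosses forces $j \leq \nu_i \leq i$ together with $i \leq \nu_j \leq j$, hence $i = j$, a contradiction. The case $j \notin [h]$, where $P^{(j)}$ has no a priori L-structure, is the main obstacle: here I plan to track $P^{(j)}$'s descent from column $j > h$ and use the forced crosses in $C_w$ and elbows in $E_w$ to rule out one of the two potential crossings with $P^{(i)}$.

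The first bullet then follows from inversion parity. In any pipe dream for $w$, pipes $P^{(a)}, P^{(b)}$ with $a < b$ cross an odd number of times precisely when $(a, b)$ is an inversion of $w^{-1}$, and the second bullet upgrades this to ``cross iff $(a,b)$ is a $w^{-1}$-inversion.'' For $i, j \in \mathcal{S}$ and $k \notin \mathcal{S}$, a short case-split on whether $w^{-1}(k) \leq \alpha$ or $w^{-1}(k) > i_1$ (using Lemma~\ref{lem:sigma-sort} to locate $w^{-1}(\mathcal{S}) = [\alpha+1, i_1]$) shows that $(k, i)$ and $(k, j)$ share the same $w^{-1}$-inversion status, so their crossings with $P^{(k)}$ coincide.
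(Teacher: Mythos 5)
The plan correctly identifies Lemma~\ref{lem:CwEw} as the key input and the general shape of the argument matches the paper's, but there is a genuine gap in your treatment of the second bullet, which you yourself flag: the case $j\notin[h]$ is left as a plan, not a proof. This case is not peripheral. For $i\in\mathcal S$, the pipe traveling horizontally through a vertical-segment crossing $(r,i)$ (with $r\le\lambda_i$) may well enter from a column $j>h$, so restricting attention to L-shapes from Lemma~\ref{lem:CwEw} cannot resolve it; and the two-crossing scenario is not a priori absurd, since the pipe dreams here are not assumed reduced. Your sub-argument for $j\in[h]$ (via $\nu_i<i$ and $\nu_j<j$) and your argument for the third bullet are fine, and your reduction of the first bullet to the second via inversions of $w^{-1}$ is correct once you replace ``cross an odd number of times'' with ``cross at all'' (which is what the Demazure product actually records; the claim as literally stated is false for non-reduced pipe dreams, but harmless once bullet two gives at most one crossing).

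The cleanest way to close the gap, and what the paper's terse proof is implicitly doing, is to pin down all crossing partners of $P^{(i)}$ at once rather than case-splitting on $j$. Since every tile $(r,c)$ with $c\le i$ and $r\le\lambda_i\le\lambda_c$ lies in $C_w$ and is hence a cross, the pipe passing horizontally through $(r,i)$ continues leftward through crosses and exits at row $r$; so the vertical crossings of $P^{(i)}$ at rows $1,\dots,\lambda_i$ are with $P^{(w(1))},\dots,P^{(w(\lambda_i))}$, and this holds whether or not $w(r)\le h$. Similarly, each horizontal crossing $(w^{-1}(i),c)$ with $c\le\nu_i$ is with $P^{(c)}$, the vertically-traveling pipe there. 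The two lists are pairwise distinct within themselves, and disjoint from each other because $(r,i)\in D(w)$ forces $w(r)>i$ while $\nu_i<i$ gives $c<i$. That yields the second bullet for all $j\in[n]$ uniformly, and it also makes the first bullet immediate: the non-$\mathcal S$ pipes crossed by $P^{(i)}$ are exactly those $P^{(k)}$ with $k\in w([\alpha])\cup[h-\beta]$, a set independent of $i\in\mathcal S$.
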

\begin{proof}
For any $i\in\mathcal S$, Lemma~\ref{lem:CwEw} guarantees that pipe $P^{(i)}$ begins by traversing $\alpha$ many cross tiles vertically, crossing pipes $P^{(k)}$ for $k \in w^{-1}([\alpha])$, and ends by traversing $h-\beta$ many cross tiles horizontally, crossing pipes $P^{(k)}$ for $k\in [h-\beta]$. Lemma~\ref{lem:CwEw} also guarantees that the tiles in $C_w \cup E_w \setminus C_{w_\sort}$ involve only the pipes $P^{(i)}$ for $i\in\mathcal S$. All three parts of the claim follow.

\end{proof}
\begin{prop}[cf.\ {\cite[Prop 3.9]{mss22}}]
\label{prop:os1-groth}
Let $w\in S_n$ and set $\lambda_i\colonequals \#D(\sigma(w))_i$. Then
\[
\mathfrak G_w = \left(\prod_{(a,b) \in D(w)\setminus D(w_\sort)} (x_a + y_b - x_ay_b) \right)\cdot\mathfrak G_{w_{\sort}}.
\]
\end{prop}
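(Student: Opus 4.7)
The plan is to deduce the proposition from a weight-preserving bijection on pipe dreams. Using the pipe dream formula $\mathfrak G_w(\mathbf x,\mathbf y)=\sum_{P\in\PD(w)}\prod_{(i,j)\in P}(x_i+y_j-x_iy_j)$ (and similarly for $w_\sort$), it suffices to construct
\[
\Phi\colon \PD(w)\to\PD(w_\sort),\qquad \Phi(P)\colonequals P\setminus (D(w)\setminus D(w_\sort)),
\]
with inverse $Q\mapsto Q\cup (D(w)\setminus D(w_\sort))$. The removed tiles contribute exactly the factor $\prod_{(a,b)\in D(w)\setminus D(w_\sort)}(x_a+y_b-x_ay_b)$ to the weight of $P$, so summing pipe dream weights yields the stated identity.

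For well-definedness of $\Phi$, I would first note that Proposition~\ref{prop:rothe-of-sort} implies $D(w)\setminus D(w_\sort)=C_w\setminus C_{w_\sort}$, which by Lemma~\ref{lem:CwEw} consists of crossings in any $P\in\PD(w)$. Corollary~\ref{cor:crossings} then identifies these tiles as exactly the crossings among the pipes $\{P^{(i)}\colon i\in\mathcal S\}$, which form a sub-pipe-dream for $\sigma(w)$ inside $E_w\cup (C_w\setminus C_{w_\sort})$. Removing them converts this sub-pipe-dream into one for the identity of $S_\beta$, so the Demazure product of $\Phi(P)$ is $w_\sort$.

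The hard part is surjectivity: I need every $Q\in\PD(w_\sort)$ to have only elbows at tiles in $D(w)\setminus D(w_\sort)$, so that $Q\cup (D(w)\setminus D(w_\sort))$ is a valid pipe dream. The key structural observation is that $w_\sort$ shares the same primary column data $(h,C,\alpha,i_1,\beta)$ as $w$, but with $\sigma(w_\sort)=\mathrm{id}$. With this in hand, Corollary~\ref{cor:crossings} applied to $w_\sort$ rules out crossings among $\mathcal S$-pipes, while Lemma~\ref{lem:CwEw} applied to $w_\sort$ confines the crossings of each pipe $Q^{(k)}$ with $k\in[h]\setminus \mathcal S$ to the top of column $k$ and to its exit row $(w_\sort)^{-1}(k)$; neither location overlaps a tile $(a,b)\in D(w)\setminus D(w_\sort)$ with $a\in[\alpha+1,i_1]$, since a horizontal crossing at row $a$ would force $k=w_\sort(a)\in\mathcal S$. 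The residual case of pipes $Q^{(k)}$ with $k>h$ requires a separate argument tracing their paths through the dominant region $C_{w_\sort}$ and again invoking the exit-row constraint. Once disjointness is established, adding crossings at $D(w)\setminus D(w_\sort)$ re-inserts the $\sigma(w)$-sub-pipe-dream, and the resulting pipe dream lies in $\PD(w)$ with image $Q$ under $\Phi$.
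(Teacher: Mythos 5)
Your bijection $\Phi$ is exactly the paper's map $F$ (the paper phrases it as replacing crosses with elbows rather than taking a set difference, but these are the same thing), the proposed inverse is the paper's $G$, and you invoke the same three ingredients — Proposition~\ref{prop:rothe-of-sort}, Lemma~\ref{lem:CwEw}, and Corollary~\ref{cor:crossings}. The well-definedness argument ($\Phi(P)\in\PD(w_\sort)$) matches the paper's. So the overall approach is the same.

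The one real gap is in your surjectivity argument, where you explicitly punt on ``the residual case of pipes $Q^{(k)}$ with $k>h$.'' The pipe-by-pipe case analysis is avoidable: the paper's shortcut is to note that $D(w)\setminus D(w_\sort)=C_w\setminus C_{w_\sort}$ sits inside $E_{w_\sort}$ (one checks that a tile $(\alpha+a,\,h-\beta+b)$ with $a\le\lambda^\sigma_b$ and $b\in[\beta]$ lies in the triangular region for $w_\sort$ because $\lambda^\sigma_b\le\beta-b$, and it misses $C_{w_\sort}$ since $D(w_\sort)_{h-\beta+b}=[\alpha]$), and then Lemma~\ref{lem:CwEw} applied to $w_\sort$ already says \emph{every} tile of $E_{w_\sort}$ is an elbow in every $Q\in\PD(w_\sort)$, uniformly over all pipes. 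With that containment in hand, the disjointness you need is immediate and your separate treatment of $\mathcal S$-pipes, $[h]\setminus\mathcal S$-pipes, and $k>h$ pipes can be deleted. The remaining step — that $\del(G(Q))=w$ — then follows from the same Corollary~\ref{cor:crossings} argument you used for $F$, read in reverse.
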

\begin{proof}
Fix a pipe dream $P \in \PD(w)$. By Proposition~\ref{prop:rothe-of-sort} and Lemma~\ref{lem:CwEw}, the tiles in $D(w) \setminus D(w_\sort)$ are crosses. Let $F(P)$ be the pipe dream obtained from $P$ by replacing the crosses in $D(w)\setminus D(w_\sort)$ with elbows. By uncrossing all pipes $P^{(i)}$ and $P^{(j)}$ with $i,j\in\mathcal S$, Corollary~\ref{cor:crossings} guarantees that the Demazure word $\del(F(P))$ of $F(P)$ is $w_\sort$.

For any pipe dream $P\in\PD(w_\sort)$, let $G(P)$ be the pipe dream obtained from $P$ by replacing the elbows in $D(w)\setminus D(w_\sort)\subseteq E_{w_\sort}$ by crosses. Corollary~\ref{cor:crossings} guarantees that the Demazure word $\del(G(P))$ is $w$. As $FG \colon \PD(w_\sort)\to\PD(w_\sort)$ and $GF\colon \PD(w)\to \PD(w)$ is the identity, the map $F$ is a bijection.

The claim follows from the fact that $F\colon \PD(w)\to\PD(w_\sort)$ scales the weight of a pipe dream by the monomial
\[
	\prod_{(a,b) \in D(w)\setminus D(w_\sort)} (x_a + y_b - x_ay_b).\qedhere
\]
\end{proof}

\begin{thm}[cf.\ {\cite[Thm 4.2]{mss22}}]
\label{thm:os2-orth}
Let $w\in S_n$ be a nonidentity sorted permutation, and suppose $w$ has orthodontic sequence 
\[
\mathbf i = (i_1, \dots, i_\ell), \qquad \mathbf j = (j_1, \dots, j_\ell), \qquad \mathbf K = (K_1, \dots, K_n),\qquad \mathbf M = (M_1, \dots, M_\ell).
\]
Let $(h,C,\alpha,i_1,\beta)$ be the primary column data of $w$. Write $\mathcal S\colonequals \{h-\beta+1,\dots,h\}$. Then:
\begin{enumerate}
\item For $k\in[\beta]$, we have $i_k = i_1 - k + 1$.
\item For $k\in[\beta]$, we have $j_k = h - \beta + k$.
\item If $\alpha > 0$, then $K_\alpha \supseteq \mathcal S$.
\item For $k\in[\alpha+1,i_1]$, we have $K_k = \emptyset$.
\item For $k\in[\beta-1]$, we have $M_k = \emptyset$. 
\item The permutation $w'\colonequals ws_{i_1}\dots s_{\alpha+1}$ has orthodontic sequence
\[
\mathbf i(w') = (i_{\beta+1}, \dots, i_\ell), \qquad \mathbf j(w') = (j_{\beta+1}, \dots, j_\ell), \qquad \mathbf M(w') = (M_{\beta+1},\dots,M_\ell),
\]
and
\[
\mathbf K(w') = \begin{cases} (K_1, \dots, K_{\alpha-1}, K_\alpha\setminus \mathcal S,\mathcal S\sqcup M_\beta, K_{\alpha+2}, \dots, K_n) &\textup{ if } \alpha > 0\\ (\mathcal S \sqcup M_\beta, K_2, \dots, K_n) &\textup{ if } \alpha = 0.\end{cases}
\]
In particular,
\[
\mathscr G_{D(w)} = \overline\pi_{i_1,h+1-\beta} \dots \overline\pi_{\alpha+1,h}\left(\prod_{s\in \mathcal S} (x_{\alpha + 1} + y_s - x_{\alpha+1}y_s)^{-1} \cdot \mathscr G_{D(w')}\right).
\]
\end{enumerate}
\end{thm}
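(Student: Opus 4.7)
The plan is to trace the orthodontia algorithm step by step on $D(w)$, using the explicit structure of $D(w)$ for a sorted $w$, and then to identify the state after $\beta$ orthodontic steps with $D(w')_-$. First, sortedness gives $w(\alpha+k) = h-\beta+k$ for each $k\in[\beta]$, so direct computation shows $D(w)_{h-\beta+k} = [\alpha]$ for all such $k$ (proving (3)); and for $i \in [\alpha+1, i_1]$ and $j > h$, the inequality $j < w(i) = h-\beta+(i-\alpha) \leq h$ fails, so no row in $[\alpha+1, i_1]$ appears in any column $j > h$. Combined with $D(w)_j = [\lambda_j]$ for $j \leq h$ (from Lemma~\ref{lem:CwEw}) with $\lambda_{h-\beta+1}=\alpha$ and $\lambda_j$ weakly decreasing, no column of $D(w)$ equals $[k]$ for any $k \in [\alpha+1, i_1]$, giving (4).

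Next I trace the orthodontic steps of $D(w)$. The leftmost nonempty column of $D(w)_-$ is column $h+1$ with content $C$; its smallest missing tooth is $i_1$ by definition, and $\#\{a\leq i_1: a\notin C\}=\beta$ gives $j_1=h+1-\beta$. Swapping rows $i_1, i_1+1$ sends $C$ to $[\alpha]\cup\{i_1\}\cup(C\cap[i_1+2,\infty))$, which is not $[i_1]$ when $\beta>1$; and the observation that columns $j>h$ satisfy $D(w)_j \cap [\alpha+1, i_1]=\emptyset$ prevents any other column from becoming $[i_1]$ after the swap (such a column would need rows $\alpha+1, \dots, i_1-1$). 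Hence $M_1 = \emptyset$. By induction, step $k \leq \beta - 1$ satisfies $i_k = i_1 - k + 1$, $j_k = h - \beta + k$, and $M_k = \emptyset$, establishing (1), (2), and (5).

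For (6), I compare $D(w')$ directly with the state of $D(w)$ after $\beta$ orthodontic steps. Writing $w'$ in one-line notation as the cyclic shift moving $w(i_1+1)$ to position $\alpha+1$ and shifting $w(\alpha+1), \dots, w(i_1)$ to positions $\alpha+2, \dots, i_1+1$, a column-by-column case analysis on $j$ (splitting on $j < h-\beta+1$, $j \in \mathcal{S}$, $j = h+1$, $j = w(i_1+1)$, and other $j > h+1$) shows that $D(w')$ is obtained from $D(w)$ by: (i) extending the $\mathcal{S}$ columns from $[\alpha]$ to $[\alpha+1]$; (ii) replacing $C$ by $[\alpha+1]\cup(C\cap[i_1+2,\infty))$ in column $h+1$; and (iii) replacing row $i_1+1$ by row $\alpha+1$ in every column $j > h+1$ that contained row $i_1+1$. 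This exactly matches the effect of the $\beta$ row swaps on $D(w)_-$; the columns that become standard intervals $[\alpha+1]$ form $M_\beta$. Removing these and the $\mathcal{S}$ columns gives $D(w')_-$, yielding (6), and the remaining orthodontia of $D(w)$ coincides with that of $D(w')$.

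The formula for $\mathscr G_{D(w)}$ then follows by unwinding the recursive definition: using $M_k = \emptyset$ for $k < \beta$ collapses the intermediate operators into $\Pi = \overline\pi_{i_1, h+1-\beta}\cdots\overline\pi_{\alpha+1, h}$; pulling $\prod_i \overline\omega_i^{K_i(w)}$ past $\Pi$ is valid since $K_i(w) = \emptyset$ for $i \in [\alpha+1, i_1]$ by (4) (making the prefactor symmetric in the relevant variables); and the relation between $\mathbf K(w)$ and $\mathbf K(w')$ from (6) combines with $\omega_{\alpha+1}^{M_\beta}$ to produce the factor $\prod_{s\in\mathcal{S}}(x_{\alpha+1} + y_s - x_{\alpha+1}y_s)^{-1}$ after cancellation. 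The main obstacle will be the case analysis in the third paragraph: verifying the column contents of $D(w')$ by direct computation from $w'$ and $w'^{-1}$. In particular, the subcase $j = w(i_1+1)$ (where $w^{-1}$ and $w'^{-1}$ genuinely differ) requires separate handling, and one must systematically check that the cyclic shift in $w'$ acts trivially on all columns $j > h$ except through the single row $i_1+1$.
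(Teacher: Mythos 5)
Your proposal follows essentially the same route as the paper: establish the structure of the first $\beta$ orthodontic steps, compare $D(w)$ and $D(w')$ column-by-column (the paper phrases this via the truncated diagrams $E$ and $E'$ and the identity $E'=E\cdot s_{i_1}\cdots s_{\alpha+1}$, you via one-line notation), and then unwind the operator formula commuting $\prod_j\overline\omega_j^{K_j}$ past the $\overline\pi$'s. One genuine difference in exposition is that the paper outsources parts (4) and (5) to \cite[Thm~4.2(iii),(iv)]{mss22}, whereas you sketch direct arguments; your argument for (5) is fine, so that is a legitimate self-contained improvement.

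However, your argument for (4) has a gap. For $j\leq h$, you conclude $\lambda_j\notin[\alpha+1,i_1]$ from ``$\lambda_{h-\beta+1}=\alpha$ and $\lambda_j$ weakly decreasing.'' But weak monotonicity only gives $\lambda_j\geq\alpha$ for $j\leq h-\beta$; it does not rule out $\lambda_j\in[\alpha+1,i_1]$ there. One actually has to use the sortedness of $w$ together with the fact that $i_1+1\in C=D(w)_{h+1}$. Concretely: if $\lambda_j=\alpha+t$ with $t\in[\beta]$ for some $j\leq h-\beta$, then $\alpha+t\in D(w)_j$ but $\alpha+t+1\notin D(w)_j$. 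Since $w(\alpha+t+1)>h-\beta\geq j$ (for $t<\beta$ because $w(\alpha+t+1)=h-\beta+t+1$, for $t=\beta$ because $w(i_1+1)>h+1$), the only way $\alpha+t+1\notin D(w)_j$ is $w^{-1}(j)\leq\alpha+t+1$. Combined with $w^{-1}(j)>\alpha+t$ and with $w^{-1}(j)\notin[\alpha+1,i_1]$ (as $j\leq h-\beta$), this forces $w^{-1}(j)=i_1+1$, i.e., $j=w(i_1+1)>h+1$, contradicting $j\leq h-\beta$. Without some argument of this kind, the claim ``no column of $D(w)$ equals $[k]$ for $k\in[\alpha+1,i_1]$'' is not established for columns $j\leq h-\beta$. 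The rest of your outline (the case analysis behind (6) and the final operator manipulation, including where $\overline\omega_{\alpha+1}^{M_\beta}$ is absorbed) matches the paper's reasoning and is sound modulo the usual bookkeeping.
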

\begin{proof}
By definition, $C = D(w)_{h+1}$ contains $[\alpha]\cup\{i_1 + 1\}$ and does not contain any of $\alpha + 1, \dots, i_1$. Thus, the leftmost missing teeth of the diagrams in the orthodontic sequence of $D(w)$ are equal to $(i_1, i_1 - 1, \dots, \alpha+1)$, and all occur at column $h+1$, with corresponding sequence of uppermost gaps $(\beta, \beta - 1, \dots, 1)$. Parts (1) and (2) follow. 

As $w$ is sorted, the columns $D(w_\sort)_{h-\beta+b}$ are equal to $[\alpha]$, and part (3) follows. Parts (4) and (5) are proven as \cite[Thm 4.2, (iii), (iv)]{mss22} respectively. 

In order to relate the orthodontic sequences of $w$ and $w'$, consider the diagrams
\begin{align*}
E &= (\underbrace{\emptyset, \dots, \emptyset}_{\textup{$h$ many}}, D(w)_{h+1}, \dots, D(w)_n),\\
E' &= (\underbrace{\emptyset, \dots, \emptyset}_{\textup{$h$ many}}, D(w')_{h+1}, \dots, D(w')_n).
\end{align*}
Note that $E' = E\cdot s_{i_1} \dots s_{\alpha+1}$ and that $M_\beta = \{j\colon E'_j = [\alpha+1]\}$. As the first $h$ columns of $D(w)$ and $D(w')$ are standard intervals, it follows that $D(w')_-$ occurs in the orthodontic sequence of diagrams for $D(w)$ and furthermore that
\[
\mathbf i(w') = (i_{\beta+1}, \dots, i_\ell), \qquad \mathbf j(w') = (j_{\beta+1}, \dots, j_\ell), \qquad \mathbf M(w') = (M_{\beta+1},\dots,M_\ell).
\]
From the facts that 
\begin{itemize}
	\item $D(w)_j = D(w')_j$ for $j \leq h-\beta$,
	\item $D(w)_j \cup \{\alpha+1\} = D(w')_j$ for $h-\beta+1\leq j\leq h$, and
	\item $M_\beta = \{j\colon E'_j = [\alpha+1]\}$,
\end{itemize}
the formula for $\mathbf K(w')$ in terms of $\mathbf K(w)$ follows. This gives part (6).

Finally, write $D(w')_-$ for the diagram obtained from $D(w')$ by removing all standard interval columns. Then,
\begin{align*}
\mathscr G_{D(w)} &= \overline\omega_1^{K_1}\dots\overline\omega_n^{K_n}\overline\pi_{i_1,h+1-\beta}\dots\overline\pi_{\alpha+1,h}(\mathscr G_{D(w')_-})
\\&\overset{(4)}=\overline\pi_{i_1,h+1-\beta}\dots\overline\pi_{\alpha+1,h}\left(\overline\omega_1^{K_1}\dots\overline\omega_n^{K_n}\mathscr G_{D(w')_-}\right)
\\&= \overline\pi_{i_1,h+1-\beta} \dots \overline\pi_{\alpha+1,h}\left(\prod_{s\in \mathcal S} (x_{\alpha + 1} + y_s - x_{\alpha+1}y_s)^{-1} \cdot \mathscr G_{D(w')}\right).\qedhere
\end{align*}
\end{proof}

In order to relate the formula from Theorem~\ref{thm:os2-orth} to Grothendieck polynomials, the following lemmas will be useful.

\begin{lem}
\label{lem:operator-trick}
Let $\overline\Delta_a\colonequals\overline\del_{i+a}\circ\dots\circ \overline\del_i$. For $a = -1$, set $\overline\Delta_a \colonequals 1$ to be the identity operator. Given $g\in\CC[\mathbf x,\mathbf y]$ and $\ell \geq 1$ such that $\del_{i+\ell}(g) = 0$, the equality
\[
\overline\pi_{i+\ell, j_\ell}\circ \overline\Delta_{\ell-1}(g) = \overline\del_{i+\ell}\circ \overline\pi_{i+\ell-1, j_\ell}\circ \overline\Delta_{\ell-2}(g)
\]
holds. In particular, for any $\ell \geq 0$,
\[
\overline\pi_{i+\ell, j_\ell}\circ \overline\Delta_{\ell-1}(g) = \overline\Delta_\ell((x_i + y_{j_\ell} - x_i y_{j_\ell})g)
\]
whenever $\del_{i+1}(g) = \dots = \del_{i+\ell}(g) = 0$.
\end{lem}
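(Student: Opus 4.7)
The plan is to prove the first identity by a direct Leibniz-rule computation that reduces it to an identity modulo $\ker \overline\del_{i+\ell}$, and then iterate to obtain the ``In particular'' assertion. Throughout, write $\beta_k^m \colonequals x_k + y_m - x_k y_m$, so that $\overline\pi_{k,m}(f) = \overline\del_k(\beta_k^m f)$, and set $h \colonequals \overline\Delta_{\ell-2}(g)$, so that $\overline\Delta_{\ell-1}(g) = \overline\del_{i+\ell-1}(h)$. Both sides of the first identity are $\overline\del_{i+\ell}$ applied to something: the LHS equals $\overline\del_{i+\ell}(\beta_{i+\ell}^{j_\ell}\,\overline\del_{i+\ell-1}(h))$ and the RHS equals $\overline\del_{i+\ell}(\overline\del_{i+\ell-1}(\beta_{i+\ell-1}^{j_\ell}\,h))$. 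It therefore suffices to prove that
\[
\beta_{i+\ell}^{j_\ell}\,\overline\del_{i+\ell-1}(h) - \overline\del_{i+\ell-1}(\beta_{i+\ell-1}^{j_\ell}\,h) \in \ker \overline\del_{i+\ell}.
\]

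I would compute this difference explicitly. The Leibniz-type rule $\overline\del_k(ab) = \overline\del_k(a)\,b + (1-x_k)\,s_k(a)\,\del_k(b)$ follows directly from $\overline\del_k = \del_k \circ (1-x_{k+1})$ and the Leibniz rule for $\del_k$, and a short calculation gives $\overline\del_{i+\ell-1}(\beta_{i+\ell-1}^{j_\ell}) = 1$ and $s_{i+\ell-1}(\beta_{i+\ell-1}^{j_\ell}) = \beta_{i+\ell}^{j_\ell}$. Applying the Leibniz rule to $\overline\del_{i+\ell-1}(\beta_{i+\ell-1}^{j_\ell}\,h)$ and expanding $\overline\del_{i+\ell-1}(h) = h + (1-x_{i+\ell-1})\,\del_{i+\ell-1}(h)$, the terms containing $\del_{i+\ell-1}(h)$ cancel and the difference telescopes to $(\beta_{i+\ell}^{j_\ell} - 1)\,h = -(1-x_{i+\ell})(1-y_{j_\ell})\,h$. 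To finish, note $\overline\del_{i+\ell}((1-x_{i+\ell})h) = \del_{i+\ell}((1-x_{i+\ell+1})(1-x_{i+\ell})h)$, and since $(1-x_{i+\ell+1})(1-x_{i+\ell})$ is $s_{i+\ell}$-symmetric this simplifies to $(1-x_{i+\ell+1})(1-x_{i+\ell})\,\del_{i+\ell}(h)$. The hypothesis $\del_{i+\ell}(g) = 0$ propagates to $\del_{i+\ell}(h) = 0$: for each factor $\overline\del_k$ appearing in $\overline\Delta_{\ell-2}$ one has $k \leq i+\ell-2$, so $\del_k$ commutes with $\del_{i+\ell}$ by the braid/commutation relations and multiplication by $1-x_{k+1}$ commutes with $\del_{i+\ell}$ by $s_{i+\ell}$-symmetry; hence $\del_{i+\ell}(h) = \overline\Delta_{\ell-2}(\del_{i+\ell}(g)) = 0$.

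For the ``In particular'' claim, iterate the first identity $\ell$ times, peeling off one isobaric divided difference per step:
\[
\overline\pi_{i+\ell,j_\ell}\overline\Delta_{\ell-1}(g) = \overline\del_{i+\ell}\,\overline\pi_{i+\ell-1,j_\ell}\overline\Delta_{\ell-2}(g) = \cdots = \overline\del_{i+\ell}\cdots\overline\del_{i+1}\,\overline\pi_{i,j_\ell}(g),
\]
with each step invoking the hypothesis $\del_{i+k}(g) = 0$ for the appropriate $k \in \{1,\ldots,\ell\}$. Unfolding the definition $\overline\pi_{i,j_\ell}(g) = \overline\del_i((x_i + y_{j_\ell} - x_iy_{j_\ell})g)$ gives the right side as $\overline\Delta_\ell((x_i + y_{j_\ell} - x_iy_{j_\ell})g)$, as desired; the $\ell = 0$ case is just the definition. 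The main technical obstacle is orchestrating the Leibniz cancellation and verifying the stability $\del_{i+\ell}(h) = 0$, but once one observes the commutation of $\del_{i+\ell}$ with the $\overline\del_k$'s in $\overline\Delta_{\ell-2}$, the rest of the computation is routine.
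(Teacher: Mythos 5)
Your argument is correct and reaches the same structural reduction as the paper---factor $\overline\del_{i+\ell}$ out of both sides, compare the arguments inside, and use $\del_{i+\ell}(g)=0$ to kill the discrepancy---but the middle step is carried out by a different calculation. The paper works at the level of operators on $\CC[\mathbf x,\mathbf y]$: it first proves the commutation identity $m_{x_{k+1}+y_j-x_{k+1}y_j}\circ\del_k=\del_k\circ m_{x_k+y_j-x_ky_j}-m_{1-y_j}$ (its $(\diamondsuit)$, modulo a sign typo in the write-up), uses it to slide the multiplication operator past $\del_{i+\ell-1}$, and then pushes $\overline\del_{i+\ell}$ and $m_{1-x_{i+\ell}}$ all the way through $\overline\Delta_{\ell-2}$ to reach $\del_{i+\ell}\bigl((1-x_{i+\ell+1})(1-x_{i+\ell})g\bigr)=0$. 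You instead fix $h=\overline\Delta_{\ell-2}(g)$ and compute the difference of the two arguments of $\overline\del_{i+\ell}$ directly by the Leibniz rule for $\overline\del_{i+\ell-1}$, together with $\overline\del_{i+\ell-1}(\beta_{i+\ell-1}^{j_\ell})=1$ and $s_{i+\ell-1}(\beta_{i+\ell-1}^{j_\ell})=\beta_{i+\ell}^{j_\ell}$; the $\del_{i+\ell-1}(h)$ terms telescope away, leaving $-(1-x_{i+\ell})(1-y_{j_\ell})h$, which is killed by $\overline\del_{i+\ell}$ because $\del_{i+\ell}(h)=0$. Your observation that $\del_{i+\ell}$ commutes factor-by-factor with $\overline\Delta_{\ell-2}$ plays the same role as the paper's commuting $\overline\del_{i+\ell}$ and $m_{1-x_{i+\ell}}$ past $\overline\Delta_{\ell-2}$. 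The upshot: your route is more elementary and hands-on, avoiding the endomorphism-ring bookkeeping, at the cost of a slightly longer explicit cancellation; the paper's is more compact once $(\diamondsuit)$ is available and isolates the error term as a single operator composition. The iteration you give for the ``In particular'' part matches the paper's.
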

\begin{proof}
We work in the ring of endomorphisms of $\CC[\mathbf x, \mathbf y]$. For $h\in\CC[\mathbf x, \mathbf y]$, write $m_h$ for the ``multiplication by $h$'' operator, given by $f\mapsto fh$. As
\[
\del_i \circ m_{x_i} = (m_{x_{i+1}}\circ \del_i) + 1 \qquad\textup{ and } \qquad \del_i\circ m_{y_j} = m_{y_j}\circ \del_i\textup{ for all $j$},
\]
observe that
\begin{align*}
m_{x_{i+1} - y_j + x_{i+1}y_j}\circ \del_i &= (m_{x_{i+1}}\circ m_{1+y_j} - m_{y_j}) \circ \del_i \\&= (\del_i\circ m_{x_i} - 1)\circ m_{1+y_j} - \del_i\circ m_{y_j}\\&\label{eqn:commute}= \del_i\circ (m_{x_i - y_j + x_iy_j}) - m_{1+y_j}.\tag{$\diamondsuit$}
\end{align*}
Hence
\begin{align*}
\overline\pi_{i+\ell, j_\ell}\circ \overline\Delta_{\ell-1} &= \overline\del_{i+\ell}\circ m_{x_{i+\ell} + y_{j_\ell} - x_{i+\ell}y_{j_\ell}}\circ \del_{i+\ell-1}\circ m_{1 - x_{i+\ell}} \circ \overline\Delta_{\ell-2}\\
&\overset{\eqref{eqn:commute}}=\overline\del_{i+\ell} \circ (\del_{i+\ell-1} \circ m_{x_{i+\ell-1} + y_{j_\ell} - x_{i+\ell-1}y_{j_\ell}} - m_{1+y_{j_\ell}}) \circ m_{1-x_{i+\ell}}\circ\overline\Delta_{\ell-2}\\
\label{eqn:operator-trick}
&=\overline\del_{i+\ell}\circ\overline\pi_{i+\ell-1,j_\ell}\circ\overline\Delta_{\ell-2} - \overline\del_{i+\ell}\circ m_{1+y_{j_\ell}}\circ m_{1-x_{i+\ell}} \circ\overline\Delta_{\ell-2}.\tag{$*$}
\end{align*}
Note that $\overline\del_{i+\ell}$ and $m_{1-x_{i+\ell}}$ commute with $\overline\Delta_{\ell-2}$, so that
\begin{align*}
\overline\del_{i+\ell}\circ m_{1+y_{j_\ell}}\circ m_{1-x_{i+\ell}} \circ\overline\Delta_{\ell-2}(g) &= m_{1+y_{j_\ell}}\circ \overline\Delta_{\ell-2}\circ\overline\del_{i+\ell}\circ m_{1-x_{i+\ell}} (g)
\\&=\label{eqn:vanishing-trick}m_{1+y_{j_\ell}}\circ \overline\Delta_{\ell-2}\circ\underbrace{\del_{i+\ell}((1-x_{i+\ell+1})(1-x_{i+\ell})g)}_{=0}.\tag{$**$}
\end{align*}
Applying the equality~\eqref{eqn:operator-trick} of operators to the function $g$, and using~\eqref{eqn:vanishing-trick}, the claimed equality
\[
\overline\pi_{i+\ell, j_\ell}\circ \overline\Delta_{\ell-1}(g) = \overline\del_{i+\ell}\circ \overline\pi_{i+\ell-1, j_\ell}\circ \overline\Delta_{\ell-2}(g)
\]
follows.

The second part of the lemma, for $\ell = 0$, is nothing but the identity
\begin{equation}
\label{eqn:ell0-case}
\overline\pi_{i,j_0}(g) = \overline\del_i((x_i - y_{j_0})g),
\end{equation}
and in general it follows from repeated application of the first part of the lemma combined with the identity~\eqref{eqn:ell0-case}.
\end{proof}

\begin{lem}[cf.\ {\cite[Lem 5.11]{mss22}}]
\label{lem:pi-to-del}
Let $g$ be a polynomial with
\[
\del_{i+1}(g) = \dots = \del_{i+k}(g) = 0.
\]
Then
\[
\overline\pi_{i+k,j_k} \overline\pi_{i+k-1,j_{k-1}} \dots \overline\pi_{i,j_0}(g) = \overline\del_{i+k}\dots\overline\del_i\left(\prod_{a=0}^k(x_i + y_{j_a} - x_iy_{j_a}) \cdot g\right)
\]
\end{lem}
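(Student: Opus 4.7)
The plan is to induct on $k$, using Lemma~\ref{lem:operator-trick} as the fundamental building block. The base case $k=0$ will be immediate: the statement reduces to $\overline\pi_{i,j_0}(g) = \overline\del_i((x_i + y_{j_0} - x_i y_{j_0})g)$, which is just the definition of $\overline\pi_{i,j_0}$ (equivalently, the $\ell=0$ case of Lemma~\ref{lem:operator-trick}, which imposes no vanishing hypothesis).

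For the inductive step, I would first apply the inductive hypothesis to $g$, which by assumption satisfies $\del_{i+1}(g) = \dots = \del_{i+k-1}(g) = 0$, to rewrite
\[
\overline\pi_{i+k-1, j_{k-1}}\dots\overline\pi_{i,j_0}(g) = \overline\Delta_{k-1}(h),
\]
where $h \colonequals \prod_{a=0}^{k-1}(x_i + y_{j_a} - x_iy_{j_a}) \cdot g$. I would then apply Lemma~\ref{lem:operator-trick} at $\ell = k$ to the function $h$, concluding
\[
\overline\pi_{i+k, j_k}\bigl(\overline\Delta_{k-1}(h)\bigr) = \overline\Delta_k\bigl((x_i + y_{j_k} - x_iy_{j_k}) h\bigr) = \overline\del_{i+k}\dots\overline\del_i\left(\prod_{a=0}^k(x_i + y_{j_a} - x_iy_{j_a}) \cdot g\right),
\]
which is exactly what the lemma asserts.

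The only real obstacle is verifying that Lemma~\ref{lem:operator-trick} applies to $h$, i.e., that $\del_{i+s}(h) = 0$ for $s = 1, \dots, k$. I expect this to be almost immediate: the factor $\prod_{a=0}^{k-1}(x_i + y_{j_a} - x_iy_{j_a})$ lies in $\CC[x_i, \mathbf y]$, so it is fixed by every transposition $s_{i+s}$ with $s \geq 1$ and pulls outside $\del_{i+s}$, reducing the check to the assumed vanishing $\del_{i+s}(g) = 0$. Thus the substantive content is really packaged inside Lemma~\ref{lem:operator-trick}, and the present lemma should follow as a straightforward unwinding of that lemma $k+1$ times.
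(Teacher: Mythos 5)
Your proof is correct and takes essentially the same approach as the paper: an induction on $k$ with base case given by the definition of $\overline\pi_{i,j_0}$, then applying the inductive hypothesis followed by Lemma~\ref{lem:operator-trick} with $\ell=k$. You even make explicit the small verification that multiplying $g$ by the $s_{i+s}$-invariant factor $\prod_{a=0}^{k-1}(x_i+y_{j_a}-x_iy_{j_a})$ preserves the vanishing $\del_{i+s}(\cdot)=0$ for $s\in[k]$, which the paper asserts without elaboration.
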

\begin{proof}
Induct on $k$ as follows. The base case $k = 0$ is the identity~\eqref{eqn:ell0-case}. For $k > 0$, the inductive hypothesis guarantees that
\[
\pi_{i+k-1,j_{k-1}} \dots \overline\pi_{i,j_0}(g) = \overline\Delta_{k-1}\left(\prod_{a=0}^{k-1}(x_i + y_{j_a} - x_iy_{j_a}) \cdot g\right).
\]
Then, because
\[
\del_{i+1}\left(\prod_{a=0}^{k-1}(x_i + y_{j_a} - x_iy_{j_a})g\right) = \dots = \del_{i+k}\left(\prod_{a=0}^{k-1}(x_i + y_{j_a} - x_iy_{j_a})g\right) = 0,
\]
Lemma~\ref{lem:operator-trick} implies that
\[
\overline\pi_{i+k,j_k} \circ \overline\Delta_{k-1}\left(\prod_{a=0}^{k-1}(x_i + y_{j_a} - x_iy_{j_a})\cdot g\right) =\overline\Delta_k\left(\prod_{a=0}^k(x_i + y_{j_a} - x_iy_{j_a})\cdot g\right).\qedhere
\]
\end{proof}

\newtheorem*{thm:double-groth-master}{Theorem~\ref{thm:double-groth-master}}
\begin{thm:double-groth-master}
Let $D$ be a \%-avoiding diagram with double orthodontic sequence $\mathbf K, \mathbf i, \mathbf j, \mathbf M$. Define
\[
\mathscr G_D(\mathbf x,\mathbf y)\colonequals \overline\omega_1^{K_1}\overline\omega_2^{K_2}\dots\overline\omega_n^{K_n}\overline\pi_{i_1,j_1}(\overline\omega_{i_1}^{M_1}\overline\pi_{i_2,j_2}(\overline\omega_{i_2}^{M_2}\dots\overline\pi_{i_\ell,j_\ell}(\overline\omega_{i_\ell}^{M_\ell})\dots)).\tag{\ref{eqn:double-groth-master}}
\]
When $D = D(w)$ is the Rothe diagram of a permutation, then $\mathscr G_D(\mathbf x, \mathbf y) = \mathfrak G_w(\mathbf x, \mathbf y)$.
\end{thm:double-groth-master}
\begin{proof}[Proof of Theorem~\ref{thm:double-groth-master}]
Induct on the orthodontic sort order (Definition~\ref{defn:os}). In the base case $w = \mathrm{id}$, both $\mathscr G_{D(\mathrm{id})} = \mathscr G_{\emptyset}$ and $\mathfrak G_{\mathrm{id}}$ are equal to $1$.

Assume that $w$ is not sorted and that $\mathscr G_{D(w_\sort)} = \mathfrak G_{w_\sort}$. Corollary~\ref{cor:os1-orth} and Proposition~\ref{prop:os1-groth} imply that
\begin{align*}
\mathscr G_{D(w)} &= \left(\prod_{(a,b)\in D(w)\setminus D(w_\sort)}(x_a + y_b - x_ay_b)\right)\mathscr G_{D(w_\sort)}\\&=\left(\prod_{(a,b)\in D(w)\setminus D(w_\sort)}(x_a + y_b - x_ay_b)\right)\mathfrak G_{w_\sort}\\&=\mathfrak G_w.
\end{align*}
Now assume that $w$ is sorted and that $\mathscr G_{D(ws_{i_1}\dots s_\alpha)} = \mathfrak G_{ws_{i_1}\dots s_\alpha}$. Writing $w'\colonequals ws_{i_1}\dots s_\alpha$ and $\mathcal S\colonequals \{h-\beta+1, \dots, h\}$, Theorem~\ref{thm:os2-orth} and Lemma~\ref{lem:pi-to-del} imply that
\begin{align*}
\mathscr G_{D(w)} &= \overline\pi_{i_1,h+1-\beta} \dots \overline\pi_{\alpha+1,h}\left(\prod_{s\in \mathcal S} (x_{\alpha + 1} + y_s - x_{\alpha+1}y_s)^{-1} \cdot \mathscr G_{D(w')}\right)
\\&= \overline\del_{i_1}\dots\overline\del_\alpha\left(\mathscr G_{D(w')}\right)
\\&= \overline\del_{i_1}\dots\overline\del_\alpha\left(\mathfrak G_{w'}\right)
\\&= \mathfrak G_w.\qedhere
\end{align*}
\end{proof}
\begin{cor}
\label{cor:double-schub-master}
If $D = D(w)$ is a Rothe diagram, then $\mathscr S_D(\mathbf x, \mathbf y) = \mathfrak S_w(\mathbf x, -\mathbf y)$.
\end{cor}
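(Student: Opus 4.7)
The plan is to combine three facts: Theorem~\ref{thm:double-groth-master}, the definition of $\mathfrak S_w(\mathbf x, \mathbf y)$, and Remark~\ref{rem:sd-nonempty}. Substituting $\mathbf y \mapsto -\mathbf y$ in the defining identity $\mathfrak S_w(\mathbf x, \mathbf y) = [\mathfrak G_w(\mathbf x, -\mathbf y)]_{\min}$ (where $[\cdot]_{\min}$ denotes the lowest-degree component) yields $\mathfrak S_w(\mathbf x, -\mathbf y) = [\mathfrak G_w(\mathbf x, \mathbf y)]_{\min}$. Theorem~\ref{thm:double-groth-master} identifies the right-hand side as $[\mathscr G_{D(w)}(\mathbf x, \mathbf y)]_{\min}$, and Remark~\ref{rem:sd-nonempty} identifies this further as $\mathscr S_{D(w)}(\mathbf x, \mathbf y)$. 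Chaining the equalities gives the corollary.

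The only nontrivial ingredient is the identification $\mathscr S_D = [\mathscr G_D]_{\min}$ asserted in Remark~\ref{rem:sd-nonempty}, which I would spell out as follows. The operators defining $\mathscr S_D$ are precisely the lowest-degree analogues of those defining $\mathscr G_D$: the lowest-degree part of the factor $x_j + y_m - x_jy_m$ of $\overline{\omega}_i^M$ is the factor $x_j + y_m$ of $\omega_i^M$. Moreover, writing $\overline{\del}_i(g) = \del_i(g) - \del_i(x_{i+1}g)$, one sees that if $g_{\min}$ is the lowest-degree part of $g$ (of degree $d$), then $\del_i(g_{\min})$ (of degree $d-1$, when nonzero) is the lowest-degree part of $\overline{\del}_i(g)$, since $\del_i(x_{i+1}g)$ contributes only in degrees $\geq d$. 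Applied to $g = (x_i + y_j - x_iy_j)f$, this shows $[\overline{\pi}_{i,j}(f)]_{\min} = \pi_{i,j}(f_{\min})$ whenever the right-hand side is nonzero.

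Iterating this stage-by-stage down the orthodontic recursion shows that $\mathscr S_D$ equals $[\mathscr G_D]_{\min}$, provided no intermediate lowest-degree part vanishes; equivalently, provided $\mathscr S_D \neq 0$. This nonvanishing is secured by the claim in Remark~\ref{rem:sd-nonempty} that the monomial $\mathbf x^D$ appears in $\mathscr S_D$ with coefficient $1$. One verifies this last point by downward induction on the orthodontic sequence: the inner expression $\omega_{i_\ell}^{M_\ell}$ contributes the monomial $\prod_{m \in M_\ell} x_1 \cdots x_{i_\ell}$, and at each step, applying $\pi_{i_k, j_k}$ followed by multiplying by $\omega_{i_k}^{M_k}$ preserves the appearance of the corresponding column monomial with coefficient $1$, because $\pi_{i}(x_1 \cdots x_{i+1} \cdot h) = x_1 \cdots x_{i+1} \cdot h$ for any $h$ symmetric in $x_i, x_{i+1}$, and a careful check on the exponents coming from the orthodontic data recovers exactly $\mathbf x^D$.

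The main (and only) obstacle is this nonvanishing/coefficient-of-$\mathbf x^D$ bookkeeping in Remark~\ref{rem:sd-nonempty}; once that is in hand, the corollary itself is a one-line consequence of Theorem~\ref{thm:double-groth-master} and the definition of $\mathfrak S_w$.
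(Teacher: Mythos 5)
Your proof is correct and takes essentially the same route as the paper, which likewise deduces the corollary by chaining the definition of $\mathfrak S_w$, Remark~\ref{rem:sd-nonempty}, and Theorem~\ref{thm:double-groth-master}. The paper states Remark~\ref{rem:sd-nonempty} without proof, so your additional lowest-degree/nonvanishing bookkeeping is supplementary detail rather than a deviation.
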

\begin{proof}
By definition, $\mathfrak S_w(\mathbf x, \mathbf y)$ is the lowest degree part of $\mathfrak G_w(\mathbf x, -\mathbf y)$. Remark~\ref{rem:sd-nonempty} guarantees that $\mathscr S_D(\mathbf x, \mathbf y)$ is the lowest degree part of $\mathscr G_D(\mathbf x, \mathbf y)$. The result now follows from Theorem~\ref{thm:double-groth-master}.
\end{proof}

\section{Lascoux positivity}
We prove Theorem~\ref{thm:lascoux-positivity-main} by reducing the problem to the special case where every column is a standard interval (cf.\ Lemma~\ref{lem:overlinepi-positivity} and Corollary~\ref{cor:omegas-then-pis}). This case can be checked explicitly (Proposition~\ref{prop:base-positivity}).

\begin{lem}
For $i\in[n]$ and $M\subseteq[n]$, and $f\in\CC[\mathbf x]$ satisfying $\deg_{x_i}(f) \leq m$, the equalities
\begin{align}
\label{eqn:omega-spec}
\omega_i^M |_{y_j \mapsto -1} &= (x_1 - 1)^{|M|}\dots(x_i - 1)^{|M|}\\
\label{eqn:omega-int}
r_{m+1,n}\left((x_1 - 1)\dots(x_i - 1)f\right) &=x_1\dots x_{n-i}(1 - x_{n-i+1})\dots(1 - x_n) r_{m,n}(f)\\
\label{eqn:pi-spec}
\pi_{i,j}(f)|_{y_j\mapsto -1} &= \del_i((x_i- 1)(f|_{y_j\mapsto1}))\\
\label{eqn:pi-int}
r_{m,n}(\del_i((x_i- 1)f)) &= \overline\pi_{n-i}(r_{m,n}(f))
\end{align}
hold.
\end{lem}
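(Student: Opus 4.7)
I would verify all four identities by direct calculation from the definitions. Equation \eqref{eqn:omega-spec} is immediate: expanding $\omega_i^M = \prod_{j\in[i],\,m\in M}(x_j+y_m)$ and setting every $y_m$ to $-1$ yields $\prod_{j\in[i]}(x_j-1)^{|M|}$. For \eqref{eqn:omega-int}, I would unwind the definition of $r_{m+1,n}$ applied to $(x_1-1)\cdots(x_i-1)f$. Under the substitution $x_k\mapsto x_{n-k+1}^{-1}$, each factor $(x_k-1)$ becomes $(x_{n-k+1}^{-1}-1)$, and the identity $x_{n-k+1}^{m+1}(x_{n-k+1}^{-1}-1) = x_{n-k+1}^{m}(1-x_{n-k+1})$ converts those $i$ factors into $(1-x_{n-i+1})\cdots(1-x_n)$; the leftover $x_j^{m+1}=x_j\cdot x_j^m$ for $j\leq n-i$ supplies the prefactor $x_1\cdots x_{n-i}$, leaving exactly $x_1\cdots x_{n-i}(1-x_{n-i+1})\cdots(1-x_n)\cdot r_{m,n}(f)$.

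Equation \eqref{eqn:pi-spec} should follow from the $\CC[\mathbf y]$-linearity of $\del_i$: write $\pi_{i,j}(f) = \del_i(x_if) + y_j\,\del_i(f)$, specialize $y_j=-1$, and recombine to obtain $\del_i((x_i-1)\cdot f|_{y_j\mapsto -1})$. In the intended application $f$ does not depend on $y_j$, so $f|_{y_j\mapsto -1} = f|_{y_j\mapsto 1}$ and the claimed identity drops out.

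The substantive step is \eqref{eqn:pi-int}. My plan is to first establish the intertwining relation
\[
\tau_n \circ \del_i \;=\; x_{n-i}x_{n-i+1}\,\del_{n-i}\circ\tau_n, \qquad \text{where } \tau_n g(\mathbf x) \colonequals g(x_n^{-1},\dots,x_1^{-1}),
\]
which follows from $\tau_n\circ s_i = s_{n-i}\circ\tau_n$ together with $\tau_n(x_i-x_{i+1}) = (x_{n-i}-x_{n-i+1})/(x_{n-i}x_{n-i+1})$. Combining this with $\tau_n((x_i-1)f) = \tfrac{1-x_{n-i+1}}{x_{n-i+1}}\,\tau_n f$ gives
\[
r_{m,n}(\del_i((x_i-1)f)) \;=\; x_1^m\cdots x_n^m \cdot x_{n-i}x_{n-i+1}\,\del_{n-i}\!\left(\tfrac{1-x_{n-i+1}}{x_{n-i+1}}\,\tau_n f\right).
\]
Since the monomial $x_1^m\cdots x_n^m\cdot x_{n-i}x_{n-i+1}$ is $s_{n-i}$-invariant, it commutes past $\del_{n-i}$; pulling it inside, cancelling $x_{n-i+1}$ with $x_{n-i+1}^{-1}$, and regrouping yields $\del_{n-i}(x_{n-i}(1-x_{n-i+1})\cdot r_{m,n}(f)) = \overline\del_{n-i}(x_{n-i}\cdot r_{m,n}(f)) = \overline\pi_{n-i}(r_{m,n}(f))$, as required.

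The main obstacle I anticipate is the bookkeeping in \eqref{eqn:pi-int}: I will need to check that every intermediate expression is an honest polynomial rather than a Laurent polynomial, and that the hypothesis $\deg_{x_i}(f)\leq m$ is exactly enough to justify each appearance of $r_{m,n}$ or $r_{m+1,n}$ on both sides. In particular, the step where $\tau_n f$ acquires a $x_{n-i+1}^{-1}$ denominator is only harmless because it is absorbed by the single factor of $x_{n-i+1}$ contributed either by $x_{n-i}x_{n-i+1}$ or by the global monomial $x_1^m\cdots x_n^m$.
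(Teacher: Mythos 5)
Your proof is correct, and for \eqref{eqn:pi-int} it takes a genuinely cleaner route than the paper does. The paper disposes of \eqref{eqn:omega-spec} and \eqref{eqn:pi-spec} as immediate and of \eqref{eqn:omega-int} and \eqref{eqn:pi-int} by ``a manual check when $f$ is a monomial and applying linearity,'' also citing an external lemma for \eqref{eqn:pi-int}. Your treatments of \eqref{eqn:omega-spec}, \eqref{eqn:pi-spec}, and \eqref{eqn:omega-int} are essentially the same direct-expansion arguments; note in passing that for \eqref{eqn:omega-int} the reversal substitution $\tau_n$ is a ring homomorphism, so your argument works at once for arbitrary $f$ and the monomial reduction isn't even needed. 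Where you genuinely diverge is \eqref{eqn:pi-int}: instead of a monomial check you prove the intertwining relation $\tau_n\circ\del_i = x_{n-i}x_{n-i+1}\,\del_{n-i}\circ\tau_n$ (which follows, as you say, from $\tau_n\circ s_i = s_{n-i}\circ\tau_n$ together with $\tau_n(x_i-x_{i+1}) = (x_{n-i}-x_{n-i+1})/(x_{n-i}x_{n-i+1})$), then push the $s_{n-i}$-invariant monomial $x_1^m\cdots x_n^m\,x_{n-i}x_{n-i+1}$ through $\del_{n-i}$ and cancel. This is a one-stroke operator-level derivation that exposes exactly why $\del_i$ turns into $\overline\pi_{n-i}$ under $r_{m,n}$, and it is easier to audit than a term-by-term verification. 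Two small remarks: in \eqref{eqn:pi-spec} the lemma's own hypothesis $f\in\CC[\mathbf x]$ already forces $f|_{y_j\mapsto 1}=f|_{y_j\mapsto -1}=f$, so you needn't appeal to ``the intended application'' --- the identity as stated holds outright. And your caution about each side being an honest polynomial is well placed: the equalities only make sense when the degree bound holds in every variable, a point the lemma's phrasing (and the paper's proof) glosses over, so your promised bookkeeping is indeed the part that requires care.
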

\begin{proof}
Equations~\eqref{eqn:omega-spec} and~\eqref{eqn:pi-spec} are immediate from the definition of $\omega_i^M$ and $\pi_{i,j}$. Equations~\eqref{eqn:omega-int} and~\eqref{eqn:pi-int} can be proven by a manual check when $f$ is a monomial and applying linearity. (Equation~\eqref{eqn:pi-int} also follows from~\cite[Lem 3.4]{yu23}).
\end{proof}

Let $\varphi_i$ denote the operator 
\[
\varphi_i\colon f\mapsto x_1 \dots x_i (1 - x_{i+1})\dots(1 - x_n)f.
\]
\begin{cor}
\label{cor:rmn-via-operators}
The polynomial $r_{m,n}(\mathscr S_D(\mathbf x, -\mathbf 1))$ can be obtained from the polynomial $1\in\CC[\mathbf x]$ by repeated application of operators of the form $f\mapsto \overline\pi_i(f)$ and $\varphi_i$.
\end{cor}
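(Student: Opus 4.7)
The plan is to construct $r_{m,n}(\mathscr S_D(\mathbf x, -\mathbf 1))$ inductively from $r_{0,n}(1) = 1$, peeling apart the nested formula for $\mathscr S_D$ one elementary operation at a time.

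First, I unpack the specialization $\mathscr S_D(\mathbf x, -\mathbf 1)$ using~\eqref{eqn:omega-spec} and~\eqref{eqn:pi-spec}. Each factor $\omega_i^M$ becomes $((x_1-1)\cdots(x_i-1))^{|M|}$, i.e.\ $|M|$ successive multiplications by $(x_1-1)\cdots(x_i-1)$; each $\pi_{i,j}$ becomes the operator $f\mapsto \del_i((x_i-1)f)$, using that $\del_i$ commutes with the substitution $y_j\mapsto -1$ (since $\del_i$ only touches the $x$-variables, so the specialization can be pushed all the way inside the nested expression). Reading the resulting formula from inside out, $\mathscr S_D(\mathbf x,-\mathbf 1)$ is obtained from $1$ by alternately performing batches of such multiplications (one batch from each $\omega_{i_k}^{M_k}$) and applying the corresponding operators $\del_{i_k}((x_{i_k}-1)\cdot)$ (from each $\pi_{i_k,j_k}$), and concluding with the multiplications contributed by $\omega_1^{K_1}\cdots \omega_n^{K_n}$.

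Second, I apply $r_{t,n}$ to each intermediate polynomial, letting $t$ count the number of multiplications performed so far. By~\eqref{eqn:omega-int}, a multiplication by $(x_1-1)\cdots(x_i-1)$ on the source side corresponds to an application of $\varphi_{n-i}$ on the $r$-transformed side (with $t$ incremented by $1$). By~\eqref{eqn:pi-int}, an application of $\del_i((x_i-1)\cdot)$ on the source side corresponds to $\overline\pi_{n-i}$ on the transformed side (with $t$ unchanged). Starting from $r_{0,n}(1) = 1$ and running through the full sequence exhibits $r_{m_D,n}(\mathscr S_D(\mathbf x, -\mathbf 1))$ as the result of applying a sequence of $\overline\pi$ and $\varphi$ operators to $1$, where $m_D = \sum_i |K_i| + \sum_k |M_k|$ equals the total number of columns of $D$, namely $m$.

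The main technical obstacle is verifying that the hypothesis $\deg_{x_i}(f) \leq t$ from the preceding lemma is maintained at each step, so that every intermediate $r_{t,n}$ actually produces a polynomial and the intertwining identities are valid. This follows from two observations: multiplication by $(x_1-1)\cdots(x_i-1)$ raises $\deg_{x_j}$ by $1$ for each $j \leq i$ and leaves the others unchanged, while $\del_i((x_i-1)\cdot)$ replaces degrees $(\deg_{x_i}, \deg_{x_{i+1}}) = (d,d')$ by a common value at most $\max(d, d'-1)$ and fixes the rest; neither operation ever raises any $\deg_{x_j}$ above the running multiplication count $t$. An induction on the length of the operator sequence then completes the proof.
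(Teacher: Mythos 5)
Your proposal is correct and follows the paper's proof essentially line for line: unpack $\mathscr S_D(\mathbf x,-\mathbf 1)$ via Equations~\eqref{eqn:omega-spec} and~\eqref{eqn:pi-spec}, then commute $r_{\bullet,n}$ past the resulting elementary operators via Equations~\eqref{eqn:omega-int} and~\eqref{eqn:pi-int}. The only addition is your explicit check of the degree-bound hypothesis needed to invoke~\eqref{eqn:omega-int} and~\eqref{eqn:pi-int} at each intermediate step (together with the small bookkeeping point that the running multiplication count may fall short of $m$ when $D$ has empty columns, remedied by extra applications of $\varphi_n$), a detail the paper leaves implicit.
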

\begin{proof}
By the orthodontic formula~\eqref{eqn:double-groth-master}, along with Equations~\eqref{eqn:omega-spec} and~\eqref{eqn:pi-spec}, $\mathscr S_D(\mathbf x, -\mathbf 1)$ can be obtained from the polynomial $1\in\CC[\mathbf x]$ by repeated application of operators of the form
\[
f\mapsto \del_i(x_i - 1)f \qquad\textup{ and } \qquad f\mapsto (x_1 - 1)\dots (x_i - 1)f.
\]
Using Equations~\eqref{eqn:omega-int} and~\eqref{eqn:pi-int} to commute the operator $r_{m,n}$ past the two operators above, it follows that $r_{m,n}(\mathscr S_D(\mathbf x, \mathbf 1))$ can be obtained by repeated application of the operators $\overline\pi_i$ and $\varphi_{n-i}$.
\end{proof}

\begin{lem}
\label{lem:overlinepi-positivity}
The equality
\[
\overline\pi_i(\mathfrak L_\alpha) = \begin{cases} \mathfrak L_\alpha &\textup{ if } \alpha_i > \alpha_{i+1}\\ \mathfrak L_{\alpha\cdot s_i} &\textup{ if } \alpha_i < \alpha_{i+1}\end{cases}
\]
holds; in particular, $\overline\pi_i$ preserves Lascoux positivity.
\end{lem}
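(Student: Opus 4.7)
My plan is to derive the lemma from two ingredients: the recursive definition of $\mathfrak L_\alpha$, and the idempotency $\overline\pi_i^2 = \overline\pi_i$. The real work is in establishing idempotency; everything else is formal.

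To prove $\overline\pi_i^2 = \overline\pi_i$, I would first rewrite
\[
\overline\pi_i(f) = \del_i\bigl(x_i(1-x_{i+1}) f\bigr) = \frac{x_i(1-x_{i+1})f - x_{i+1}(1-x_i)\,s_i(f)}{x_i - x_{i+1}},
\]
from which one reads off that $\overline\pi_i(f)$ is $s_i$-symmetric: applying $s_i$ swaps the two terms of the numerator and negates the denominator. Next, I would check that $\overline\pi_i$ restricts to the identity on $s_i$-symmetric polynomials. If $g$ is $s_i$-symmetric, then $\del_i g = 0$, so by the Leibniz rule $\del_i((1-x_{i+1})x_i g) = g\,\del_i((1-x_{i+1})x_i)$, and a direct calculation gives $\del_i(x_i - x_ix_{i+1}) = 1 - 0 = 1$. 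Hence $\overline\pi_i(g) = g$, and combining the two observations produces $\overline\pi_i^2 = \overline\pi_i$.

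With idempotency in hand, the formula for $\overline\pi_i(\mathfrak L_\alpha)$ collapses into two one-line arguments according to which of $\alpha$ or $\alpha\cdot s_i$ triggers the recursive clause in the definition of Lascoux polynomials. In the case where the recursion applies directly to $\alpha$, one has $\mathfrak L_\alpha = \overline\pi_i\mathfrak L_{\alpha\cdot s_i}$, and idempotency yields
\[
\overline\pi_i\mathfrak L_\alpha = \overline\pi_i^2\mathfrak L_{\alpha\cdot s_i} = \overline\pi_i\mathfrak L_{\alpha\cdot s_i} = \mathfrak L_\alpha.
\]
In the complementary case, the recursion applies to $\alpha\cdot s_i$ (whose entries at positions $i,i+1$ now satisfy the strict ascent condition), giving
\[
\mathfrak L_{\alpha\cdot s_i} = \overline\pi_i\mathfrak L_{(\alpha\cdot s_i)\cdot s_i} = \overline\pi_i\mathfrak L_\alpha
\]
directly, with no further work needed.

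The Lascoux positivity statement is then an immediate corollary: the main formula shows that $\overline\pi_i$ maps each single basis element $\mathfrak L_\alpha$ to a single $\mathfrak L_\beta$ with coefficient $1$, so by linearity any nonnegative $\mathbb Z_{\geq 0}$-combination of Lascoux polynomials is sent to another such combination. I do not expect any substantive obstacle; the lemma is essentially a definition-chase, and the only nontrivial step is the routine verification of idempotency, which is standard for Demazure-type operators.
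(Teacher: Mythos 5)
Your argument takes the same route as the paper's proof — the recursive definition of $\mathfrak L_\alpha$ combined with idempotency of $\overline\pi_i$ — and you additionally supply a correct verification of idempotency (via $s_i$-symmetry of $\overline\pi_i(f)$ and the Leibniz rule), which the paper simply takes as known. However, note that the two conclusions you derive are attached to the \emph{opposite} inequalities from the ones printed in the lemma. When the recursion applies directly to $\alpha$, i.e.\ when $\alpha_i < \alpha_{i+1}$, your computation gives $\overline\pi_i\mathfrak L_\alpha = \mathfrak L_\alpha$, whereas the lemma claims $\mathfrak L_{\alpha\cdot s_i}$; and when $\alpha_i > \alpha_{i+1}$ you get $\overline\pi_i\mathfrak L_\alpha = \mathfrak L_{\alpha\cdot s_i}$, whereas the lemma claims $\mathfrak L_\alpha$. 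You are right and the printed statement is not: for $\alpha = (1,0)$ and $i = 1$ one has $\overline\pi_1\mathfrak L_{(1,0)} = \overline\pi_1(x_1) = x_1 + x_2 - x_1x_2 = \mathfrak L_{(0,1)} = \mathfrak L_{\alpha\cdot s_1}$, not $\mathfrak L_\alpha$. The lemma (and the paper's own one-line proof) has the two cases transposed; since only the ``in particular'' clause — that $\overline\pi_i$ carries each Lascoux polynomial to a single Lascoux polynomial — is used in Proposition~\ref{prop:mult-implies-main} and Theorem~\ref{thm:lascoux-positivity-main}, nothing downstream is affected, but it is a genuine typo worth flagging rather than a gap in your argument.
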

\begin{proof}
When $\alpha_i > \alpha_{i+1}$, the definition of Lascoux polynomials guarantees that $\mathfrak L_\alpha = \overline\pi_i\mathfrak L_{\alpha\cdot s_i}$. The claim then follows from the fact that $\overline\pi_i$ is idempotent. When $\alpha_i < \alpha_{i+1}$, the claim follows from the definition of $\mathfrak L_{\alpha \cdot s_i}$.
\end{proof}

\begin{prop}
\label{prop:mult-implies-main}
Conjecture~\ref{conj:mult-pos} implies Conjecture~\ref{conj:lascoux-positivity-general}.
\end{prop}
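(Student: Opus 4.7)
The plan is to chain together Corollary~\ref{cor:rmn-via-operators}, Lemma~\ref{lem:overlinepi-positivity}, and the assumed Conjecture~\ref{conj:mult-pos}, then induct on the length of the operator word produced by Corollary~\ref{cor:rmn-via-operators}. The overall strategy is that the operator construction of Corollary~\ref{cor:rmn-via-operators} already reduces Lascoux positivity of $r_{m,n}(\mathscr S_D(\mathbf x, -\mathbf 1))$ to two operator-level positivity statements: one of these is Lemma~\ref{lem:overlinepi-positivity}, and the other is precisely Conjecture~\ref{conj:mult-pos}.

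Fix a \%-avoiding diagram $D\subseteq [n]\times[m]$ and assume Conjecture~\ref{conj:mult-pos}. By Corollary~\ref{cor:rmn-via-operators}, the polynomial $r_{m,n}(\mathscr S_D(\mathbf x,-\mathbf 1))$ is obtained from the constant $1$ by applying a sequence of operators, each of the form $\overline\pi_i$ or $\varphi_i$. Since $1 = \mathfrak L_{(0,\dots,0)}$, the base case is graded Lascoux positive. For the inductive step I would combine two facts. First, Lemma~\ref{lem:overlinepi-positivity} shows that each $\overline\pi_i$ sends a Lascoux polynomial to another (possibly the same) Lascoux polynomial, and hence preserves graded Lascoux positivity by linearity. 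Second, the assumed Conjecture~\ref{conj:mult-pos} shows that each $\varphi_i$ sends a single Lascoux polynomial to a graded nonnegative combination of Lascoux polynomials, and hence also preserves graded Lascoux positivity by linearity. Induction on the length of the operator word then yields the first assertion of Conjecture~\ref{conj:lascoux-positivity-general}; the ``in particular'' assertion for $w \in S_n$ follows by specializing to $D = D(w)$ and invoking Corollary~\ref{cor:double-schub-master}, which identifies $\mathscr S_{D(w)}(\mathbf x,-\mathbf 1)$ with $\mathfrak S_w(\mathbf x,\mathbf 1)$.

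I expect no genuine obstacle: the argument is a purely formal induction, since both operator types appearing in Corollary~\ref{cor:rmn-via-operators} preserve graded Lascoux positivity---the first unconditionally via Lemma~\ref{lem:overlinepi-positivity}, and the second under the assumed Conjecture~\ref{conj:mult-pos}. The only very mild check is that graded nonnegativity is stable under the relevant linear combinations, which is immediate from the definition. If anything, the conceptual content of the proposition has already been packaged into Corollary~\ref{cor:rmn-via-operators}, whose design isolates the multiplicative step as the only non-trivial positivity input, leaving exactly Conjecture~\ref{conj:mult-pos} as what must be assumed.
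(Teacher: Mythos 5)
Your proof matches the paper's argument exactly: both reduce via Corollary~\ref{cor:rmn-via-operators} to showing that $\overline\pi_i$ and $\varphi_i$ preserve graded Lascoux positivity, then invoke Lemma~\ref{lem:overlinepi-positivity} for the former and the assumed Conjecture~\ref{conj:mult-pos} for the latter. You spell out the induction on the operator word and the reduction of the ``in particular'' clause via Corollary~\ref{cor:double-schub-master} a bit more explicitly, but the reasoning is the same.
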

\begin{proof}
By Corollary~\ref{cor:rmn-via-operators}, it suffices to show that the operators $\overline\pi_i$ and $\varphi_i$ preserve Lascoux positivity. This follows from Lemma~\ref{lem:overlinepi-positivity} and Conjecture~\ref{conj:mult-pos}, respectively.
\end{proof}

\begin{lem}
\label{lem:orth-of-incl}
Assume that the columns of $D$ are ordered by inclusion, and write $\mathbf K, \mathbf i, \mathbf j, \mathbf M$ for the orthodontic sequence. Then:
\begin{enumerate}
\item Every diagram appearing in the orthodontic sequence of $D$ also has all columns ordered by inclusion.
\item If $K_k\neq\emptyset$, then $i_j\neq k$ for all $j$.
\item If $M_k \neq \emptyset$ for some $k$, then $i_j \neq i_k$ for all $j > k$.
\end{enumerate}
\end{lem}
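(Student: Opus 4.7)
The plan is to prove all three parts by isolating a single invariant, parametrized by an integer $a$, that is preserved by every orthodontic step, and then triggering this invariant appropriately for parts (2) and (3).

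Part (1) is formal: every orthodontic step consists of a row swap, which is a bijection of $[n]$ and hence preserves $\subseteq$ between columns, followed by replacing certain columns by the empty column. The passage from $D$ to $D_-$ also only replaces columns by empty ones. Since $\emptyset$ is comparable to every subset of $[n]$ under $\subseteq$, each of these operations preserves the property that columns are totally ordered by inclusion.

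For parts (2) and (3), consider the invariant
\[
(\mathrm{J})_a:\ \text{every column $C$ of the diagram satisfies $C \supseteq [a]$ or $C \subseteq [a]$.}
\]
If $(\mathrm{J})_a$ holds, then no column can have $a$ as a missing tooth, because such a column would require both $a+1 \in C$ and $a \notin C$, contradicting each alternative. In particular the next pivot $i_j$ satisfies $i_j \neq a$. Given $i_j \neq a$, the transposition $s_{i_j}$ acts either entirely within $[a]$ (when $i_j \leq a-1$) or entirely outside $[a]$ (when $i_j \geq a+1$), hence fixes $[a]$ setwise; consequently $C \supseteq [a] \iff s_{i_j}C \supseteq [a]$ and $C \subseteq [a] \iff s_{i_j}C \subseteq [a]$. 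Since replacing $[i_j]$-columns by empty ones trivially preserves $(\mathrm{J})_a$, the entire orthodontic step preserves $(\mathrm{J})_a$.

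With preservation in hand, both remaining parts follow immediately by turning on $(\mathrm{J})_a$ for the appropriate $a$. For part (2), the assumption $K_k \neq \emptyset$ means $[k]$ is a column of $D$, so the inclusion-ordering hypothesis gives $(\mathrm{J})_k$ for $D$ and hence for $D_-$; this propagates to every diagram in the sequence and forbids $i_j = k$. For part (3), the assumption $M_k \neq \emptyset$ means $[i_k]$ appears as a column of the intermediate diagram right after the row swap at step $k$; by part (1) this diagram has inclusion-ordered columns, so $(\mathrm{J})_{i_k}$ holds there, descends to the next diagram in the sequence, and propagates to give $i_j \neq i_k$ for all $j > k$. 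The only real point to check is the preservation step — in particular, identifying $(\mathrm{J})_a$ as the right invariant rather than a stricter alternative like ``every column contains $[a]$ or is contained in $[a-1]$'', which is not preserved under the row swap.
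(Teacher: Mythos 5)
Your proof is correct and follows the same overall plan as the paper: identify a per-column interval invariant, show it is preserved by orthodontic steps, and deduce that the relevant index is never a missing tooth, triggering it at $a=k$ for part (2) and at $a=i_k$ for part (3). Your handling of the invariant is the cleaner of the two. The paper begins from what is in effect your $(\mathrm J)_k$ (``every column is contained in $[k]$ or equals $[k]\sqcup S$'') but then propagates only the weaker consequence ``no empty box in row $k$ has a box below it,'' appealing to the heuristic that orthodontic moves ``remove columns or move boxes up.'' Taken literally, that heuristic does not by itself preserve the weaker condition: if the pivot is $k-1$ and a column $C$ has $k\in C$ but $k-1\notin C$, the box in row $k$ moves up to row $k-1$, and the weaker condition survives only because $(\mathrm J)_k$ additionally forces $C\subseteq[k]$ (e.g.\ $C=\{1,3,4\}$ with $k=3$ and pivot $2$ satisfies the weaker condition and chain order but fails after the swap; it is excluded precisely because it violates $(\mathrm J)_3$). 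Your observation that $s_i$ fixes $[a]$ setwise whenever $i\neq a$ preserves $(\mathrm J)_a$ directly and transparently, with no hidden reliance on where boxes go, so it is a modest but genuine improvement in precision over the paper's phrasing.
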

\begin{proof}
Part 1 of the lemma follows from the fact that orthodontic moves either remove a column or swap two rows; both of these moves preserve the inclusion property of the columns.

If $K_k\neq\emptyset$, then $D$ has a column equal to $[k]$. As the columns of $D$ are ordered by inclusion, every other column of $D$ is either contained in $[k]$ or is equal to $[k]\sqcup S$ for some $S$; in particular, no empty box in the $k$\textsuperscript{th} row has a square below it in its column. As orthodontic moves remove columns or move boxes up, no empty box in the $k$\textsuperscript{th} row of any diagram $D'$ appearing in the orthodontic sequence of $D$ has a square below it in its column; in particular, $k$ can never be a missing tooth of $D'$. Part 2 of the lemma follows.

Similarly, if $M_k\neq\emptyset$ for some $k$, then there is a diagram $D'$ in the orthodontic sequence of $D$ which has a column equal to $[i_k]$. Arguing as above, it follows that $i_k$ can never be a missing tooth of a diagram in the orthodontic sequence of $D'$. Part 3 of the lemma follows. 
\end{proof}

\begin{cor}
\label{cor:omegas-then-pis}
Let $D$ be a diagram whose columns are ordered by inclusion, and let $\mathbf K, \mathbf i, \mathbf j, \mathbf M$ denote the orthodontic sequence of $D$. Then,
\[
\mathscr S_D(\mathbf x, \mathbf y) = \pi_{i_1,j_1}(\dots\pi_{i_\ell,j_\ell}(\omega_1^{K_1}\dots\omega_n^{K_n}\cdot \omega_{i_1}^{M_1}\dots\omega_{i_\ell}^{M_\ell})\dots).
\]
\end{cor}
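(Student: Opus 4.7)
The plan is to push every $\omega$-factor in the definition
\[
\mathscr S_D(\mathbf x, \mathbf y) = \omega_1^{K_1}\dots\omega_n^{K_n}\,\pi_{i_1,j_1}(\omega_{i_1}^{M_1}\,\pi_{i_2,j_2}(\dots\pi_{i_\ell,j_\ell}(\omega_{i_\ell}^{M_\ell})\dots))
\]
past all of the $\pi_{i_s,j_s}$ operators it currently lies outside of, and then collect the resulting factors into a single multiplicative prefactor in the innermost slot. The engine is the standard fact that $\del_i(gf) = g\,\del_i(f)$ whenever $g$ is symmetric in $x_i$ and $x_{i+1}$, which immediately yields $\pi_{i,j}(gf) = g\,\pi_{i,j}(f)$ under the same symmetry hypothesis on $g$.

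The key calculation is that $\omega_k^M = \prod_{a\in[k],\,m\in M}(x_a + y_m)$ is symmetric in $x_i, x_{i+1}$ whenever $i \neq k$. There are two cases: if $k < i$, then $\omega_k^M$ involves only $x_1,\dots,x_k$, so neither $x_i$ nor $x_{i+1}$ appears; if $k \geq i+1$, then $i, i+1 \in [k]$ and for each $m\in M$ both $(x_i+y_m)$ and $(x_{i+1}+y_m)$ appear as factors, with no other factor involving $x_i$ or $x_{i+1}$. Hence $\pi_{i,j}$ commutes with multiplication by $\omega_k^M$ whenever $i \neq k$.

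Combining this observation with Lemma~\ref{lem:orth-of-incl} supplies exactly the non-interference needed: whenever $K_k \neq \emptyset$, part (2) gives $i_s \neq k$ for every $s$, so $\omega_k^{K_k}$ can be slid past every $\pi_{i_s,j_s}$; whenever $M_k \neq \emptyset$, part (3) gives $i_s \neq i_k$ for every $s > k$, so $\omega_{i_k}^{M_k}$ can be slid past $\pi_{i_{k+1},j_{k+1}},\dots,\pi_{i_\ell,j_\ell}$. (If $K_k$ or $M_k$ is empty, the corresponding factor is $1$ and commutes trivially.) Carrying out these commutations one $\pi$-layer at a time sweeps every $\omega$-factor into the innermost position, where they combine into the asserted prefactor $\omega_1^{K_1}\dots\omega_n^{K_n}\cdot \omega_{i_1}^{M_1}\dots\omega_{i_\ell}^{M_\ell}$. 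No step is technically deep; the main obstacle is really conceptual, namely recognizing that Lemma~\ref{lem:orth-of-incl} was formulated precisely to rule out the only obstructions to these commutations, with part (3) carefully restricted to $s > k$ matching our direction of commutation (always \emph{inward}).
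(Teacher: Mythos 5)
Your proposal is correct and follows the same route as the paper: observe that $\omega_k^J$ is symmetric in $x_i,x_{i+1}$ for $i\ne k$ (hence commutes with $\pi_{i,j}$), and then invoke parts (2) and (3) of Lemma~\ref{lem:orth-of-incl} to slide all the $\omega$-factors inward past the relevant $\pi_{i_s,j_s}$'s. The only difference is that you spell out the symmetry check by cases, which the paper leaves implicit.
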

\begin{proof}
Because $\omega_i^J$ is symmetric with respect to $x_1, \dots, x_i$ and also with respect to $x_{i+1}, \dots, x_n$, it follows that $\omega_i^J$ commutes with $\pi_{i',j}$ whenever $i' \neq i$. Lemma~\ref{lem:orth-of-incl} guarantees that the $\omega_i^J$ in the orthodontic formula~\eqref{eqn:double-groth-master} can be commuted past the $\pi_{i',j}$ occurring to the right. The claim follows.
\end{proof}
Let $C_{n,k,\ell}$ be the set of $\alpha = (\alpha_1, \dots, \alpha_n)\in\NN^n$ with $\alpha_1 = \dots = \alpha_k = \ell $ and $\alpha_j \leq \ell$ for all $j$.
\begin{lem}
\label{lem:Cnkl}
Let $\alpha \in C_{n,k,\ell}$ and $i \leq k$. Then
\[
\mathfrak L_\alpha(x_1, \dots, x_n) = x_1^\ell \dots x_i^\ell \mathfrak L_{\alpha(i)}(x_{i+1},\dots, x_n), \qquad\textup{ where }\alpha(i)\colonequals (\alpha_{i+1}, \dots, \alpha_n).
\]
\end{lem}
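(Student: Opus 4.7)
The plan is to prove the lemma by induction on the number of Lascoux recursion steps needed to reduce $\alpha$ to its partition sorting (equivalently, by induction on the length of the shortest permutation $\sigma$ with $\alpha \cdot \sigma$ weakly decreasing). The key point is that every ascent of $\alpha \in C_{n,k,\ell}$ lives strictly to the right of position $i$, so the operators used in the recursion for $\mathfrak L_\alpha$ commute with multiplication by $x_1^\ell \dots x_i^\ell$.

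For the base case, suppose $\alpha$ is weakly decreasing. Then $\alpha(i) = (\alpha_{i+1}, \dots, \alpha_n)$ is also weakly decreasing, so $\mathfrak L_{\alpha(i)}(x_{i+1}, \dots, x_n) = x_{i+1}^{\alpha_{i+1}} \dots x_n^{\alpha_n}$ by definition, and multiplying by $x_1^\ell \dots x_i^\ell = x_1^{\alpha_1}\dots x_i^{\alpha_i}$ gives exactly $\mathfrak L_\alpha = x_1^{\alpha_1}\dots x_n^{\alpha_n}$.

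For the inductive step, suppose $\alpha$ has an ascent $\alpha_j < \alpha_{j+1}$. Since $\alpha_1 = \dots = \alpha_k = \ell$ and $\alpha_p \leq \ell$ for all $p$, such a $j$ must satisfy $j \geq k+1$, and in particular $j > i$. The swap $\alpha \cdot s_j$ still lies in $C_{n,k,\ell}$ because it does not touch positions $1, \dots, k$. By the inductive hypothesis applied to $\alpha \cdot s_j$, we have
\[
\mathfrak L_{\alpha \cdot s_j}(x_1, \dots, x_n) = x_1^\ell \dots x_i^\ell \mathfrak L_{(\alpha \cdot s_j)(i)}(x_{i+1}, \dots, x_n).
\]
Now observe that $(\alpha \cdot s_j)(i) = \alpha(i) \cdot s_{j-i}$ as compositions of length $n-i$, and that the Lascoux recursion operator swapping positions $j-i$ and $j-i+1$ of $\alpha(i)$ is exactly $\overline\pi_j$ acting on the variables $x_j, x_{j+1}$. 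Hence
\[
\mathfrak L_{\alpha(i)}(x_{i+1}, \dots, x_n) = \overline\pi_j \mathfrak L_{\alpha(i) \cdot s_{j-i}}(x_{i+1}, \dots, x_n) = \overline\pi_j \mathfrak L_{(\alpha \cdot s_j)(i)}(x_{i+1}, \dots, x_n).
\]

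Applying $\overline\pi_j$ to the inductive identity and using that $\overline\pi_j$ only involves $x_j$ and $x_{j+1}$, with $j > i$, so it commutes with multiplication by $x_1^\ell \dots x_i^\ell$, gives
\[
\mathfrak L_\alpha = \overline\pi_j \mathfrak L_{\alpha \cdot s_j} = x_1^\ell \dots x_i^\ell \cdot \overline\pi_j \mathfrak L_{(\alpha \cdot s_j)(i)} = x_1^\ell \dots x_i^\ell \mathfrak L_{\alpha(i)}(x_{i+1}, \dots, x_n),
\]
completing the induction. The only mild subtlety is the reindexing check that the recursion for $\mathfrak L_{\alpha(i)}$ in the variables $x_{i+1}, \dots, x_n$ matches the recursion for $\mathfrak L_\alpha$ at the same step, but this is immediate once one uses that the ascent position $j$ satisfies $j > i$.
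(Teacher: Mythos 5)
Your proof is correct and takes essentially the same approach as the paper: both rest on the observation that, for $\alpha\in C_{n,k,\ell}$, every Demazure--Lascoux operator used in the recursion for $\mathfrak L_\alpha$ has index strictly greater than $k\geq i$ and therefore commutes with multiplication by $x_1^\ell\cdots x_i^\ell$. The paper unrolls the recursion to the partition $\sort(\alpha)$ in one step and commutes once, whereas you package the same commutation as a step-by-step induction on the length of the sorting permutation; the content is identical.
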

\begin{proof}
Write $\lambda\colonequals \sort(\alpha)$ for the partition obtained by reordering $\alpha$ so that the components are weakly decreasing. Then
\[
\mathfrak L_\alpha = \overline \pi_{i_1}\dots\overline \pi_{i_\ell}(\mathbf x^\lambda),
\]
where $i_j > k$ for all $j$. As $\lambda_1 = \dots = \lambda_k = \ell$, 
\[
\mathfrak L_\alpha = x_1^\ell\dots x_i^\ell  \overline \pi_{i_1}\dots\overline \pi_{i_\ell}(\mathbf x^{\lambda(i)}), \qquad \lambda(i)\colonequals (\lambda_{i+1}, \dots, \lambda_n)
\]
for any $i \leq k$. The claim follows from the fact that
\[
\overline \pi_{i_1}\dots\overline \pi_{i_\ell}(\mathbf x^{\lambda(i)}) = \mathfrak L_{\alpha(i)}(x_{i+1}, \dots, x_n).\qedhere
\]
\end{proof}
\begin{lem}
\label{lem:Cnkl-basis}
The set of Lascoux polynomials $\{\mathfrak L_\alpha\colon \alpha \in C_{n,k,\ell}\}$ forms a basis for the vector space $V_{k,\ell}$ of polynomials of the form
\[
x_1^\ell \dots x_k^\ell \cdot f(x_{k+1}, \dots, x_n), \qquad \deg_{x_i}(f) \leq \ell.
\]
\end{lem}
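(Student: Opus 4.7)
The plan is to use Lemma~\ref{lem:Cnkl} to reduce the claim to a statement purely about Lascoux polynomials in the variables $x_{k+1}, \dots, x_n$, and then to establish the reduced statement via containment, linear independence, and a dimension count. Explicitly, the map $\alpha \mapsto \alpha(k) \colonequals (\alpha_{k+1}, \dots, \alpha_n)$ is a bijection from $C_{n,k,\ell}$ onto $\{0,1,\dots,\ell\}^{n-k}$, so $|C_{n,k,\ell}| = (\ell+1)^{n-k} = \dim V_{k,\ell}$; and by Lemma~\ref{lem:Cnkl}, $\mathfrak L_\alpha = x_1^\ell \dots x_k^\ell \cdot \mathfrak L_{\alpha(k)}(x_{k+1},\dots,x_n)$. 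Hence it suffices to show (i) each $\mathfrak L_{\alpha(k)}(x_{k+1},\dots,x_n)$ has degree at most $\ell$ in each of $x_{k+1}, \dots, x_n$, and (ii) the polynomials $\{\mathfrak L_{\alpha(k)} \colon \alpha \in C_{n,k,\ell}\}$ are linearly independent.

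For (i), I will verify that $\overline\pi_i$ preserves the property ``$\deg_{x_j}(f) \leq \ell$ for every $j$.'' By linearity this reduces to the single-monomial check: if $a, b \leq \ell$, then $\overline\pi_i(x_i^a x_{i+1}^b) = \del_i(x_i^{a+1} x_{i+1}^b) - \del_i(x_i^{a+1} x_{i+1}^{b+1})$ is a sum of monomials $x_i^c x_{i+1}^d$ with $c, d \leq \max(a,b) \leq \ell$; the usual geometric-series form of $\del_i$ applied to a monomial handles the cases $a \geq b$ and $a < b$ separately (with a sign flip in the latter). Iterating along the recursive definition of $\mathfrak L_{\alpha(k)}$, starting from the monomial $\mathbf x^{\sort(\alpha(k))}$ (whose largest part is $\leq \ell$), yields (i).

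For (ii), I will invoke the more general fact that $\{\mathfrak L_\beta \colon \beta \in \NN^{n-k}\}$ is linearly independent in $\CC[x_{k+1},\dots,x_n]$: any subset of a linearly independent set is itself linearly independent. A short proof of this general fact: given $\sum c_\beta \mathfrak L_\beta = 0$, set $d = \min\{|\beta|\colon c_\beta\neq 0\}$ and extract the degree-$d$ homogeneous component; since $\mathfrak L_\beta$ has lowest-degree part $\kappa_\beta$ of degree $|\beta|$, this yields $\sum_{|\beta|=d} c_\beta \kappa_\beta = 0$, which forces $c_\beta = 0$ for $|\beta| = d$ by the standard basis property of key polynomials, contradicting minimality of $d$. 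Combined with the dimension count, (i) and (ii) complete the argument. The main obstacle is the degree-preservation calculation in (i); step (ii) is routine once linear independence of keys is taken for granted.
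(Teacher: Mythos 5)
Your proof is correct and takes essentially the same approach as the paper: both reduce via Lemma~\ref{lem:Cnkl} to a statement about Lascoux polynomials in the variables $x_{k+1},\dots,x_n$ and then combine a degree bound, linear independence, and a dimension count. The only difference is that you explicitly verify two facts the paper asserts without proof, namely that $\overline\pi_i$ preserves the per-variable degree bound and that Lascoux polynomials are linearly independent (via their lowest-degree parts being key polynomials); both verifications are sound.
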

\begin{proof}
The set of polynomials $\mathfrak L_\alpha(x_{k+1}, \dots, x_n)$ with $\alpha_i\leq \ell$ forms a basis for the set of polynomials $\CC[x_{k+1}, \dots, x_n]$ such that $\deg_{x_i}(f) \leq \ell$ for all $\ell$. Lemma~\ref{lem:Cnkl} implies that $\{\mathfrak L_\alpha\colon \alpha \in C_{n,k,\ell}\}$ spans $V_{k,\ell}$. Since Lascoux polynomials are linearly independent, the claim follows.
\end{proof}

Recall that the stable Grothendieck polynomial $G_w(x_1, \dots, x_n)$ of a permutation $w\in S_m$ is defined to be the limit $\lim_{N\to\infty} \mathfrak G_{1^N\times w}(x_1, \dots, x_n, 0, 0,\dots)$.

\begin{example}
\label{ex:g21}
For $w = 21$, the stable Grothendieck polynomial $G_{21}(x_1, \dots, x_n)$ is equal to
\[
G_{21}(x_1, \dots, x_n) = \sum_{i=1}^n(-1)^{i+1}\mathbf e_i(\mathbf x),
\]
where $\mathbf e_i$ is the $i$\textsuperscript{th} elementary symmetric polynomial. (More generally, for permutations with a unique descent, $G_w(x_1, \dots, x_n)$ can be computed e.g.\ using \cite[Thm 2.2]{lenart00}.)

In particular,
\[
(1-x_1)\dots(1-x_n) = 1 - G_{21}(x_1, \dots, x_n).\qedhere
\]
\end{example}

The key ingredient for the Lascoux positivity in Theorem~\ref{thm:lascoux-positivity-main} is the following result by Orelowitz and Yu.
\begin{thm}[{\cite[Thm 1.3]{oy23}}]
\label{thm:tianyi-hammer}
The product $\mathfrak L_\alpha(x_1, \dots, x_n) \cdot G_w(x_1, \dots, x_n)$ is a graded nonnegative sum of Lascoux polynomials:
\[
\mathfrak L_\alpha(x_1, \dots, x_n) \cdot G_w(x_1, \dots, x_n) = \sum_\beta c_\beta (-1)^{|\beta| - \ell(w) - |\alpha|} \mathfrak L_\beta(x_1, \dots, x_n), \qquad c_\beta \geq 0.
\]
\end{thm}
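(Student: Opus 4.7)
The plan is to prove this Lascoux-positivity result by realizing both factors as characters of explicit $K$-theoretic crystal structures and then decomposing the tensor product. The Lascoux polynomial $\mathfrak L_\alpha$ should first be identified as the character of a Demazure-type $K$-crystal $\mathcal B(\alpha)$, e.g.\ on set-valued key tableaux of shape $\alpha$ (via a Monical--Pechenik--Scrimshaw-style model or a $K$-Kohnert variant), where the Demazure--Lascoux operators $\overline\pi_i$ act as natural crystal operators and where the excess of set-valued entries provides the inhomogeneous sign grading. The stable Grothendieck polynomial $G_w$ should be realized as the character of a full (seminormal) $K$-crystal $\mathcal C(w)$, for instance on set-valued Edelman--Greene tableaux or set-valued reduced-word objects, which is known to decompose into highest-weight $K$-components.

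Concretely, I would proceed in three steps. First, I would fix these crystal models and verify their characters. Second, I would construct a $K$-Demazure insertion algorithm, extending ordinary Demazure insertion to the set-valued setting: given a pair $(T, U) \in \mathcal B(\alpha) \times \mathcal C(w)$, insert the entries of $U$ into $T$ one at a time using a Hecke-insertion-flavored bumping, producing a set-valued key tableau of some shape $\beta$. Third, I would verify that this insertion is weight-preserving modulo excess and gives a bijection
\[
\mathcal B(\alpha) \times \mathcal C(w) \;\xrightarrow{\;\sim\;}\; \bigsqcup_{\beta} \mathcal B(\beta)^{\oplus c_\beta},
\]
which directly yields the claimed expansion with the stated signs $(-1)^{|\beta|-\ell(w)-|\alpha|}$ coming from the excess grading on set-valued cells. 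An equivalent, more structural reformulation would be to prove a $K$-theoretic Demazure tensor-product theorem: the tensor $\mathcal B(\alpha) \otimes \mathcal C(w)$, viewed in the appropriate $K$-crystal category, decomposes as a disjoint union of Demazure $K$-crystals.

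The hard part, I expect, is showing that $K$-theoretic bumping is compatible with the Demazure truncation that distinguishes $\mathfrak L_\alpha$ from a fully symmetric generating function. In ordinary Demazure theory, the tensor of a Demazure crystal with a full crystal is closed under the required raising operators essentially by the classical Demazure character formula; in the $K$-setting, an inserted entry may enlarge an existing cell contents rather than bump to a new cell, and one must check that such events never lead outside the Demazure-admissible set of tableaux of shape $\beta$. Once this closure is established, the positivity is automatic: each Demazure $K$-component contributes a single $\mathfrak L_\beta$ to the character, and the excess statistic distributes the signs exactly as in the target identity. A backup approach, if the insertion proves delicate, would be to use induction on $\ell(w)$ together with a $K$-Pieri rule for $\mathfrak L_\alpha \cdot G_{(k)}$ and $\mathfrak L_\alpha \cdot G_{(1^k)}$, leveraging Lemma~\ref{lem:overlinepi-positivity} to propagate positivity through applications of $\overline\pi_i$, which commute past the symmetric factor $G_w$.
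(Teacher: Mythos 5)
This theorem is not proved in the paper at all; it is quoted directly from Orelowitz and Yu \cite{oy23}, and the paper's only added remark is that their proof gives a tableau formula in which each tableau contributes a single Lascoux polynomial. So there is no in-paper proof to compare your sketch against, and you should be aware that in this context the correct ``proof'' is simply the citation.

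As for your sketch on its own terms: it is in the right spirit --- Orelowitz and Yu do work with combinatorial models for Lascoux and stable Grothendieck polynomials and produce a tableau-indexed Demazure-positive expansion --- but what you have written is a plan, not a proof, and its load-bearing step is precisely the thing that needs to be established. The classical fact that a Demazure crystal tensored with a normal crystal decomposes as a disjoint union of Demazure crystals (Joseph, Lakshmibai--Littelmann--Magyar, Kashiwara) does not transfer to the $K$-theoretic setting for free: your ``$K$-Demazure insertion'' and the closure of the Demazure-admissible set under Hecke-flavored bumping is exactly the content of the theorem, and you explicitly flag it as ``the hard part'' without resolving it. Two smaller but real issues: (i) the sign $(-1)^{|\beta|-\ell(w)-|\alpha|}$ is asserted to ``come from the excess grading'' but you never verify that the excess statistic on your model is additive under insertion, which is where the grading could fail; and (ii) the backup route via a $K$-Pieri rule and induction on $\ell(w)$ does not work as stated, because $G_w$ for a general permutation is not built from $G_{(k)}$ and $G_{(1^k)}$ by multiplication --- the recursion on $G_w$ is by isobaric divided differences, which do not commute with multiplication by $\mathfrak L_\alpha$, so Lemma~\ref{lem:overlinepi-positivity} cannot be invoked the way you suggest. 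If you want an honest proof you should either supply the $K$-crystal tensor/closure lemma in detail or reduce, as Orelowitz--Yu do, to a concrete tableau-counting argument with an explicit weight-preserving bijection.
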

Orelowitz and Yu prove more in \cite[Thm 1.3]{oy23}: they express the product as a sum over certain tableaux, each of which contribute a Lascoux polynomial in the expansion.

\begin{prop}
\label{prop:base-positivity}
Assume that $\varphi_i^{a_i}\varphi_{i+1}^{a_{i+1}}\dots\varphi_n^{a_n}$ is a graded nonnegative sum of Lascoux polynomials for some $a_i, a_{i+1}, \dots, a_n \geq 0$. Then
\[
\varphi_i \cdot \varphi_i^{a_i}\varphi_{i+1}^{a_{i+1}}\dots\varphi_n^{a_n}
\]
is again a graded nonnegative sum of Lascoux polynomials.
\end{prop}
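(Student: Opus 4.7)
The plan is to factor $\varphi_i$ into multiplication by $x_1\cdots x_i$ and multiplication by $(1-x_{i+1})\cdots(1-x_n)$, and to handle these two factors using the prefix-splitting Lemmas~\ref{lem:Cnkl} and~\ref{lem:Cnkl-basis} together with the Orelowitz--Yu product formula (Theorem~\ref{thm:tianyi-hammer}), respectively.

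First, I would set $\ell = a_i + a_{i+1} + \cdots + a_n$ and unroll the definition: the polynomial
\[
f \colonequals \varphi_i^{a_i}\varphi_{i+1}^{a_{i+1}}\cdots\varphi_n^{a_n}(1)
\]
has the form $x_1^\ell\cdots x_i^\ell\,\tilde g(x_{i+1},\dots,x_n)$ with $\deg_{x_j}(\tilde g) \leq \ell$ for all $j > i$, so $f \in V_{i,\ell}$. The hypothesis together with Lemma~\ref{lem:Cnkl-basis} yields a graded nonnegative expansion $f = \sum_{\alpha\in C_{n,i,\ell}} c_\alpha\,\mathfrak L_\alpha$, and Lemma~\ref{lem:Cnkl} factors this through $\tilde g = \sum_\alpha c_\alpha\,\mathfrak L_{\alpha(i)}(x_{i+1},\dots,x_n)$: a graded nonnegative Lascoux sum in $x_{i+1},\dots,x_n$ in which every $\alpha(i)$ has entries at most $\ell$.

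Next, I would invoke Example~\ref{ex:g21} to rewrite $(1-x_{i+1})\cdots(1-x_n) = 1 - G_{21}(x_{i+1},\dots,x_n)$ and compute
\[
\varphi_i(f) = x_1^{\ell+1}\cdots x_i^{\ell+1}\,\sum_\alpha c_\alpha\bigl(\mathfrak L_{\alpha(i)} - G_{21}\,\mathfrak L_{\alpha(i)}\bigr).
\]
Theorem~\ref{thm:tianyi-hammer} applied with $w=21$ (so $\ell(w)=1$) rewrites each $\mathfrak L_{\alpha(i)} - G_{21}\,\mathfrak L_{\alpha(i)}$ as $\mathfrak L_{\alpha(i)} + \sum_\beta c_{\alpha,\beta}(-1)^{|\beta|-|\alpha(i)|}\mathfrak L_\beta(x_{i+1},\dots,x_n)$, a graded nonnegative Lascoux sum whose sign pattern meshes with that of the outer $c_\alpha$'s to exhibit the bracketed sum as graded nonnegative.

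The step I expect to be the main obstacle is converting this inner expansion back to a Lascoux combination in all of $x_1,\dots,x_n$ via Lemma~\ref{lem:Cnkl}, because this requires every $\beta$ above to satisfy $\beta_j\leq\ell+1$. I would verify this by a degree argument: since $\deg_{x_j}(\tilde g)\leq\ell$ and $\deg_{x_j}(G_{21})\leq 1$, the polynomial multiplied by $x_1^{\ell+1}\cdots x_i^{\ell+1}$ has $x_j$-degree at most $\ell+1$ for every $j>i$, so Lemma~\ref{lem:Cnkl-basis} applied to the $n-i$ variables $x_{i+1},\dots,x_n$ (with $k=0$ and bound $\ell+1$) forces every $\beta$ with nonzero coefficient to lie in $\{\beta:\beta_j\leq\ell+1\}$. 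Lemma~\ref{lem:Cnkl} then identifies $x_1^{\ell+1}\cdots x_i^{\ell+1}\mathfrak L_\beta(x_{i+1},\dots,x_n)$ with $\mathfrak L_{(\ell+1,\dots,\ell+1,\beta)}(x_1,\dots,x_n)$, indexed by a composition in $C_{n,i,\ell+1}$, which yields the desired graded nonnegative Lascoux expansion of $\varphi_i(f)$.
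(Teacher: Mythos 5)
Your proposal is correct and follows essentially the same route as the paper's proof: factor $\varphi_i$ as $x_1\cdots x_i \cdot (1-G_{21}(x_{i+1},\dots,x_n))$, use Lemmas~\ref{lem:Cnkl} and~\ref{lem:Cnkl-basis} to peel off the $x_1^\ell\cdots x_i^\ell$ prefix and work in the tail variables, apply Theorem~\ref{thm:tianyi-hammer} with $w=21$ to expand $G_{21}\mathfrak L_{\alpha(i)}$, and then restore the prefix via the same degree-boundedness argument. In fact your last paragraph is slightly more careful than the paper's phrasing on one point: the degree bound $\deg_{x_j}(\mathfrak L_{\beta(i)}) \leq \ell+1$ is not automatic from bounding the degree of the product; it genuinely needs the basis statement (Lemma~\ref{lem:Cnkl-basis} with $k=0$) to conclude that only bounded-degree Lascoux polynomials appear, and you spell this out explicitly. (As a small aside, the paper's final display of the proof has a typo — the prefix entries of $\beta$ should be $\ell+1$, not $\ell$ — and your version gets this right.)
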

\begin{proof}
For $i = n$, the result is trivial.

Assume that $i \leq n-1$ and write $f\colonequals \varphi_i^{a_i}\varphi_{i+1}^{a_{i+1}}\dots\varphi_n^{a_n}$. Write $\ell\colonequals a_i + \dots + a_n$ and observe that $f\in V_{i,\ell}$. Lemma~\ref{lem:Cnkl-basis} implies that the Lascoux expansion of $f$, graded nonnegative by assumption, reads
\[
f = \sum_{\alpha\in C_{n,i,\ell}}(-1)^{|\alpha| - \ell}c_\alpha\mathfrak L_\alpha(x_1, \dots, x_n), \qquad c_\alpha \geq 0.
\]
By Lemma~\ref{lem:Cnkl}, the polynomial $f$ expands as
\[
f = \sum_{\alpha \in C_{n,i,\ell}} (-1)^{|\alpha| - \ell} c_\alpha x_1^\ell \dots x_i^\ell \cdot \mathfrak L_{\alpha(i)}(x_{i+1}, \dots, x_n), \qquad\textup{ where } \alpha(i) \colonequals (\alpha_{i+1}, \dots, \alpha_n).
\]
As $\varphi_i = x_1 \dots x_i(1 - G_{21}(x_{i+1}, \dots, x_n))$ (cf.\ Example~\ref{ex:g21}), the polynomial $\varphi_i f$ expands as
\begin{equation}
\label{eqn:varphiif-expansion}
\varphi_i f = \sum_{\alpha \in C_{n,i,\ell}}(-1)^{|\alpha| - \ell} c_\alpha x_1^{\ell+1}\dots x_i^{\ell+1} \cdot (1 - G_{21}(x_{i+1}, \dots, x_n)) \cdot \mathfrak L_{\alpha(i)}(x_{i+1}, \dots, x_n).
\end{equation}
Theorem~\ref{thm:tianyi-hammer} implies that
\[
G_{21}(x_{i+1}, \dots, x_n) \mathfrak L_{\alpha(i)}(x_{i+1}, \dots, x_n) = \sum_{\beta(i)} c_{\alpha(i),\beta(i)} (-1)^{|\beta(i)| - 1 - |\alpha(i)|}\mathfrak L_{\beta(i)}(x_{i+1}, \dots, x_n).
\]
As $\deg_{x_j}(\mathfrak L_{\alpha(i)})\leq \ell$ and $\deg_{x_j}(G_{21}(x_{i+1}, \dots, x_n)) = 1$ for $i+1 \leq j \leq n$, it follows that $\deg_{x_j}(\mathfrak L_{\beta(i)}) \leq \ell+1$ for $i+1 \leq j\leq n$. Writing $\beta(i)\colonequals (\beta_{i+1}, \dots, n)$, Lemma~\ref{lem:Cnkl} implies that
\[
x_1^{\ell+1} \dots x_i^{\ell+1}\mathfrak L_{\beta(i)}(x_{i+1}, \dots, x_n) = \mathfrak L_\beta(x_1, \dots, x_n), \qquad \beta\colonequals (\underbrace{\ell, \dots, \ell}_{i \textup{ many}}, \beta_{i+1}, \dots, \beta_n).
\]
It follows that each summand in Equation~\eqref{eqn:varphiif-expansion} has a graded nonnegative Lascoux expansion.
\end{proof}

\newtheorem*{thm:lascoux-positivity-main}{Theorem~\ref{thm:lascoux-positivity-main}}
\begin{thm:lascoux-positivity-main}
Let $D\subseteq[n]\times[m]$ be a diagram whose columns are ordered by inclusion. Let $\mathscr S_D(\mathbf x, \mathbf y)$ be the lowest degree part of $\mathscr G_D(\mathbf x, \mathbf y)$. Then, the polynomial
\[
x_1^m \dots x_n^m \mathscr S_D(x_n^{-1}, \dots, x_1^{-1}; -1, \dots, -1)
\]
is a graded nonnegative sum of Lascoux polynomials $\mathfrak L_\alpha(x_1, \dots, x_n)$.
\end{thm:lascoux-positivity-main}
\begin{proof}
Corollary~\ref{cor:omegas-then-pis} asserts
\[
\mathscr S_D(\mathbf x, \mathbf y) = \pi_{i_1,j_1}(\dots\pi_{i_\ell,j_\ell}(\omega_1^{K_1}\dots\omega_n^{K_n}\cdot \omega_{i_1}^{M_1}\dots\omega_{i_\ell}^{M_\ell})\dots).
\]
Equations~\eqref{eqn:omega-spec}--\eqref{eqn:pi-int}, along with the fact that the $\varphi_i$ commute with each other, imply that
\[
r_{m,n}(\mathscr S_D(\mathbf x, -\mathbf 1)) = \overline\pi_{n - i_1}\dots\overline\pi_{n - i_\ell}(\varphi_{k_1}\dots\varphi_{k_m}(1))
\]
for $k_1 \geq \dots \geq k_m$. Proposition~\ref{prop:base-positivity} implies that $\varphi_{k_1}\dots\varphi_{k_m}(1)$ is Lascoux positive, and the result follows from Lemma~\ref{lem:overlinepi-positivity}.
\end{proof}
\newtheorem*{cor:lascoux-positivity-schub}{Corollary~\ref{cor:lascoux-positivity-schub}}

\begin{cor:lascoux-positivity-schub}
Let $w\in S_n$ be a vexillary permutation and write $\mathfrak S_w(x_1, \dots, x_n; y_1, \dots, y_n)$ for the double Schubert polynomial. Then the polynomial
\[
x_1^n \dots x_n^n \mathfrak S_w(x_n^{-1}, \dots, x_1^{-1}; 1, \dots, 1)
\]
is a graded nonnegative sum of Lascoux polynomials $\mathfrak L_\alpha(x_1, \dots, x_n)$.
\end{cor:lascoux-positivity-schub}
\begin{proof}
By Corollary~\ref{cor:double-schub-master}, the double Schubert polynomial $\mathfrak S_w(\mathbf x, \mathbf 1)$ is equal to $\mathscr S_{D(w)}(\mathbf x, -\mathbf 1)$. The result follows from Theorem~\ref{thm:lascoux-positivity-main}.
\end{proof}

%\begin{example}
%	%nonvexillary
%	If $w=2143$, then
%	\begin{align*}
%		\mathfrak{S}_w(x_1,x_2,x_3;y_1,y_2,y_3)&=
%		x_1^2
%		+x_1 x_2
%		+x_1 x_3
%		-2 x_1 y_1
%		-x_1 y_2
%		-x_1 y_3
%		-x_2 y_1
%		-x_3 y_1
%		+y_1^2
%		+y_1 y_2
%		+y_1 y_3
%	\end{align*}
%	which gives
%	\begin{align*}
%		x_1^3x_2^3x_3^3 \mathfrak S_w(x_3^{-1}, x_2^{-1}, x_1^{-1}; 1, 1, 1)&=
%		3 x_1^3 x_2^3 x_3^3
%		-x_1^3 x_2^2 x_3^3 
%		-x_1^3 4 x_2^3 x_3^2 
%		+x_1^3 x_2^2 x_3^2 
%		+x_1^3 x_2^3 x_3 
%		-x_1^2 x_2^3 x_3^3 
%		+x_1^2 x_2^3 x_3^2\\
%		&=\left(\mathfrak L_{(2, 3, 2)} + \mathfrak L_{(3, 3, 1)}\right) - \left(\mathfrak L_{(2, 3, 3)} + 2\mathfrak L_{(3, 3, 2)}\right) + \mathfrak L_{(3, 3, 3)}.\qedhere
%	\end{align*}
%\end{example}

\begin{example}
	%dominant
	Replacing $w \in S_n$ with $w(n+1) \in S_{n+1}$ amounts to replacing 
	\[f\colonequals x_1^n\dots x_n^n\mathfrak S_w(x_n^{-1}, \dots, x_1^{-1};\mathbf 1)\quad \mbox{with}\quad x_1^{n+1}x_2\dots x_n x_{n+1} f(x_2,\dots,x_{n+1}).\]
	This operation preserves Lascoux positivity by Lemma~\ref{lem:Cnkl}. For example, if $w = 321$, then
	\begin{align*}
		\mathfrak{S}_w(\mathbf{x};\mathbf{y})&=\left(x_1-y_1\right) \left(x_2-y_1\right) \left(x_1-y_2\right)\\
		&=
			 x_1^2 x_2
			-x_1^2 y_1
			-x_1 x_2 y_1
			-x_1 x_2 y_2
			+x_1 y_1^2
			+x_1 y_1 y_2
			+x_2 y_1 y_2
			-y_1^2 y_2,
	\end{align*}
	which gives
	\begin{align*}
		x_1^3x_2^3x_3^3 \mathfrak S_w(x_3^{-1},x_2^{-1}, x_1^{-1}; \mathbf{1})
		&=
		x_1^3 x_2^2 x_3
		-x_1^3 x_2^3 x_3
		-x_1^3 x_2^2 x_3^2
		-x_1^3 x_2^2 x_3^2\\
		&\qquad
		+x_1^3 x_2^3 x_3^2
		+x_1^3 x_2^3 x_3^2
		+x_1^3 x_2^2 x_3^3
		-x_1^3 x_2^3 x_3^3\\
		&=\left(\mathfrak L_{321}\right) - \left(2\mathfrak L_{322} + \mathfrak L_{331}\right) + \left(\mathfrak L_{323}+\mathfrak L_{332}\right).
	\end{align*}
	Taking $w = 3214$ gives
	\begin{align*}
		x_1^4x_2^4x_3^4 x_4^4 \mathfrak S_w(x_4^{-1},x_3^{-1}, x_2^{-1}, x_1^{-1}; \mathbf{1})
		&=
		x_1^4 x_2^4 x_3^3 x_4^2
		-x_1^4 x_2^4 x_3^4 x_4^2
		-x_1^4 x_2^4 x_3^3 x_4^3
		-x_1^4 x_2^4 x_3^3 x_4^3\\
		&\qquad
		+x_1^4 x_2^4 x_3^4 x_4^3
		+x_1^4 x_2^4 x_3^4 x_4^3
		+x_1^4 x_2^4 x_3^3 x_4^4
		-x_1^4 x_2^4 x_3^4 x_4^4\\
%		&=\left(\mathfrak L_{(4, 4, 3, 2)}\right) - \left(2\mathfrak L_{(4, 4, 3, 3)} + \mathfrak L_{(4, 4, 4, 2)}\right) + \left(\mathfrak L_{(4, 4, 3, 
%			4)}+\mathfrak L_{(4, 4, 4, 3)}\right).		
		&=\left(\mathfrak L_{4432}\right) - \left(2\mathfrak L_{4433} + \mathfrak L_{4442}\right) + \left(\mathfrak L_{4434}+\mathfrak L_{4443}\right).	\qedhere
	\end{align*}
\end{example}

\begin{bibdiv}
\begin{biblist}
\bib{bb93}{article}{
   author={Bergeron, Nantel},
   author={Billey, Sara},
   title={RC-graphs and Schubert polynomials},
   journal={Experiment.\ Math.},
   volume={2},
   number={4},
   pages={257-269},
   date={1993}
}
\bib{bjs93}{article}{
   author={Billey, Sara},
   author={Jockusch, William},
   author={Stanley, Richard},
   title={Some combinatorial properties of Schubert polynomials},
   journal={J.\ Alg.\ Comb.},
   volume={2},
   pages={345-374},
   date={1993}
}
\bib{bfhtw23}{article}{
   author={Brubaker, Ben},
   author={Frechette, Claire},
   author={Hardt, Andrew},
   author={Tibor, Emily},
   author={Weber, Katherine},
   title={Frozen pipes: lattice models for Grothendieck polynomials},
   journal={Alg.\ Comb.},
   volume={6},
   number={3}
   pages={789-833},
   date={2023}
}
\bib{ccmm23}{article}{
   author={Castillo, Federico},
   author={Cid{-}Ruiz, Yairon},
   author={Mohammadi, Fatemeh},
   author={Montaño, Jonathan},
   title={Double Schubert polynomials do have saturated Newton polytopes},
   journal={Forum Math.\ Sigma},
   volume={11},
   date={2023}
}
\bib{fms18}{article}{
   author={Fink, Alex},
   author={M\'esz\'aros, Karola},
   author={St.\ Dizier, Avery},
   title={Schubert polynomials as integer point transforms of generalized permutahedra},
   journal={Adv.\ Math.},
   volume={332},
   pages={465-475},
   date={2018}
}
\bib{fk94}{inproceedings}{
   author={Fomin, Sergey},
   author={Kirillov, Anatol},
   title={Grothendieck polynomials and the Yang-Baxter equation},
   booktitle={Proceedings of the Sixth Conference in Formal Power Series and Algebraic Combinatorics, DIMACS},
   pages={183-190},
   date={1994}
}
\bib{fk96}{article}{
   author={Fomin, Sergey},
   author={Kirillov, Anatol},
   title={The Yang-Baxter equation, symmetric functions, and
Schubert polynomials},
   journal={Disc.\ Math.},
   volume={153},
   pages={123-143},
   date={1996}
}

\bib{hmss24}{article}{
   author={Hafner, Elena},
   author={M\'esz\'aros, Karola},
   author={Setiabrata, Linus},
   author={St.\ Dizier, Avery},
   title={M-convexity of Grothendieck polynomials via bubbling},
   journal={SIAM J.\ Disc.\ Math.},
   volume={38},
   date={2024},
   number={3},
   pages={2194--2225}
}

\bib{hmms22}{article}{
   author={Huh, June},
   author={Matherne, Jacob},
   author={M\'esz\'aros, Karola},
   author={St.\ Dizier, Avery},
   title={Logarithmic concavity of Schur and related polynomials},
   journal={Trans.\ Amer.\ Math.\ Soc.},
   volume={375},
   number={6},
   pages={4411-4427},
   date={2022}
}
\bib{km04}{article}{
   author={Knutson, Allen},
   author={Miller, Ezra},
   title={Subword complexes in Coxeter groups},
   journal={Adv.\ Math.},
   volume={184},
   number={1},
   pages={161-176},
   date={2004}
}
\bib{km05}{article}{
   author={Knutson, Allen},
   author={Miller, Ezra},
   title={Gr\"obner geometry of Schubert polynomials},
   journal={Ann.\ Math.},
   volume={161},
   pages={1245-1318},
   date={2005}
}
\bib{kp87}{article}{
   author={Kra\'skiewicz, Witold},
   author={Pragacz, Piotr},
   title={Foncteurs de Schubert},
   journal={C.\ R.\ Math.\ Acad.\ Sci.\ Paris},
   volume={304},
   number={9},
   pages={209-211},
   date={1987}
}
\bib{kp04}{article}{
   author={Kra\'skiewicz, Witold},
   author={Pragacz, Piotr},
   title={Foncteurs de Schubert},
   title={Schubert functors and Schubert polynomials},
   journal={Eur.\ J.\ Comb.},
   volume={25},
   number={8},
   pages={1327-1344},
   date={2004}
}
\bib{llm02}{article}{
   author={Lakshmibai, Venkatramani},
   author={Littelmann, Peter},
   author={Magyar, Peter},
   title={Standard monomial theory for Bott--Samelson varieties},
   journal={Compos.\ Math.},
   volume={130},
   pages={293-318},
   date={2002}
}
\bib{lls21}{article}{
   author={Lam, Thomas},
   author={Lee, Seung Jin},
   author={Shimozono, Mark},
   title={Back stable Schubert calculus},
   journal={Compos.\ Math.},
   volume={157},
   pages={883-962},
   date={2021}
}
\bib{lls23}{article}{
   author={Lam, Thomas},
   author={Lee, Seung Jin},
   author={Shimozono, Mark},
   title={Back stable $K$-theory Schubert calculus},
   journal={Int.\ Math.\ Res.\ Not.\ IMRN},
   volume={2023},
   number={24},
   pages={21381–21466},
   date={2023}
}
\bib{lascoux90}{article}{
  author={Lascoux, Alain},
  title={Anneau de Grothendieck de la vari\'et\'e de drapeaux},
  booktitle={The Grothendieck Festchrift, vol.\ III},
  pages={1-34},
  date={1990}
}
\bib{ls82}{article}{
  author={Lascoux, Alain},
  author={Sch\"utzenberger, Marcel-Paul},
  title={Structure de Hopf de l'anneau de cohomologie et de l'anneau de Grothendieck d'une vari\'et\'e de drapeaux},
  journal={C.\ R.\ Acad.\ Sc.\ Paris},
  volume={295},
  number={11},
  pages={629--633},
  date={1982}
}
\bib{lenart00}{article}{
   author={Lenart, Cristian},
   title={Combinatorial aspects of the $K$-theory of Grassmannians},
   journal={Ann.\ Comb.},
   volume={4},
   pages={67-82},
   date={2000}
}
\bib{magyar98}{article}{
   author={Magyar, Peter},
   title={Schubert polynomials and Bott-Samelson varieties},
   journal={Comment.\ Math.\ Helv.},
   volume={73},
   number={4},
   pages={603-636},
   date={1998}
}
\bib{mss22}{article}{
   author={M\'esz\'aros, Karola},
   author={Setiabrata, Linus},
   author={St.\ Dizier, Avery},
   title={An orthodontia formula for Grothendieck polynomials},
   journal={Trans.\ Amer.\ Math.\ Soc.},
   volume={375},
   number={2},
   pages={1281-1303},
   date={2022}
}
\bib{oy23}{article}{
   author={Orelowitz, Gidon},
   author={Yu, Tianyi},
   title={Lascoux expansion of the product of a Lascoux and a stable Grothendieck},
   year={2023},
   eprint={arXiv:2312.01647}
}
\bib{psw24}{article}{
   author={Pechenik, Oliver},
   author={Speyer, David E},
   author={Weigandt, Anna},
   title={Castelnuovo-Mumford regularity of matrix Schubert varieties},
   journal={Sel.\ Math.\ New Ser.},
   volume={66},
   date={2024}
}
\bib{rs95}{article}{
   author={Reiner, Victor},
   author={Shimozono, Mark},
   title={Key polynomials and a flagged Littlewood--Richardson rule},
   journal={J.\ Comb.\ Theory Ser.\ A},
   volume={70},
   number={1},
   pages={107-143},
   date={1995}
}
\bib{rs98}{article}{
   author={Reiner, Victor},
   author={Shimozono, Mark},
   title={Percentage-Avoiding, Northwest Shapes and Peelable Tableaux},
   journal={J.\ Comb.\ Theory Ser.\ A},
   volume={82},
   number={1},
   pages={1-73},
   date={1998}
}
\bib{weigandt21}{article}{
   author={Weigandt, Anna},
   title={Bumpless pipe dreams and alternating sign matrices},
   journal={J.\ Comb.\ Theory Ser.\ A},
   volume={182},
   year={2021}
}
\bib{yu23}{article}{
   author={Yu, Tianyi},
   title={Connection between Schubert polynomials and top Lascoux polynomials},
   year={2023},
   eprint={arXiv:2302.03643}
}
\end{biblist}
\end{bibdiv}

\end{document}